\newtheorem{question}{Question}[section]
\newtheorem{theorem}[question]{Theorem}
\newtheorem{lemma}[question]{Lemma}
\newtheorem{corollary}[question]{Corollary}
\newtheorem{example}[question]{Example}
\newtheorem{definition}[question]{Definition}
\newtheorem{proposition}[question]{Proposition}
\newtheorem{remark}[question]{Remark}
\title{A new class of spaces with all finite powers Lindel\"of}
\author{Natasha May}
\address{Department of Mathematics, Faculty of Science and Engineering, York University, Toronto, ON,  M3J 1P3 Canada}
\curraddr{}
\email{mayn@mathstat.yorku.ca, natashammay@gmail.com}
\thanks{Corresponding author: Natasha May, Tel. +14166502322}
\author{Santi Spadaro}
\address{Department of Mathematics, Faculty of Science and Engineering, York University, Toronto, ON,  M3J 1P3 Canada}
\curraddr{Institute of Mathematics, Silesian University in Opava, Na Rybn\' i\v cku 626/1, 746 01 Opava, Czech Republic}
\email{santispadaro@yahoo.com}
\thanks{The second author was supported by an INdAM-Cofund outgoing fellowship. He is also grateful to the Fields Institute of the University of Toronto for hospitality.}
\author{Paul Szeptycki}
\address{Department of Mathematics, Faculty of Science and Engineering, York University, Toronto, ON,  M3J 1P3 Canada}
\email{szeptyck@yorku.ca}
\thanks{The third author acknowledges support from NSERC grant 238944}
\theoremstyle{remark}
\newtheorem*{notation}{Notation}
\newtheorem*{claim}{Claim}
\newtheorem*{note}{Note}
\subjclass[2000]{Primary: 54D20; Secondary: 54A25}
\keywords{countable network weight, D-spaces, L-spaces, Lindel\" of spaces}
\begin{document}

\begin{abstract}
We consider a new class of open covers and classes of spaces defined from them, called $\iota $-spaces (``iota spaces").  We explore their relationship with $\epsilon $-spaces (that is, spaces having all finite powers Lindel\"of) and countable network weight.  An example of a hereditarily $\epsilon $-space whose square is not hereditarily Lindel\" of is provided.
\end{abstract}

\maketitle
\section{Introduction}

A topological space in which each finite power is Lindel\" of is called an \emph{$\epsilon $-space}. Equivalently, $X$ is an $\epsilon $-space if every open $\omega $-cover of $X$ has a countable $\omega $-subcover, where a cover of a space $X$ is an \emph{$\omega $-cover} if each finite subset of $X$ is contained in an element of the cover.  A natural generalization of an $\omega $-cover can be defined by requiring that disjoint finite sets be separated by a member of the cover.  We call a cover ${\mathcal U}$ of a space $X$ an \emph{$\iota $-cover} if for every pair of disjoint finite sets $F, G\subseteq X$, there is a member $U\in {\mathcal U}$ such that $F\subseteq U$ and $G\cap U =\emptyset $.  Notice that every space with a countable network is an $\epsilon $-space.  Furthermore, we will show that every $T_2$ space with a countable network has the property that every open $\iota $-cover has a countable refinement that is also an $\iota $-cover. Hence, we call such spaces with this property $\iota $-spaces.  The motivation for these definitions of $\iota$-cover and $\iota $-space arose when the third named author was trying to make the example in \cite{SS2} zero dimensional to solve the D-space problem.

We will explore the relationship between $\iota $-spaces and countable network weight, providing a ZFC example of a regular $\iota $-space with no countable network.  We also investigate the relationship between $\epsilon $-spaces and $\iota $-spaces, determining an additional property that makes them equivalent.  We use the notion of an $\iota $-cover to construct a hereditarily $\epsilon$-space whose square is not hereditarily Lindel\" of.  Finally, we give an example of a non D-space that has a countable open $\iota $-cover.

\section{Preliminaries}
\begin{definition}
A family of sets $\mathcal{U}$ is an $\omega$-cover of $X$ if for every $F \in [X]^{<\omega}$ there is $U \in \mathcal{U}$ such that $F \subset U$.
\end{definition}

\begin{definition}
A family of sets $\mathcal{U}$ is an \emph{$\iota$-cover} (\emph{$n$-ota cover}) of $X$ if for every $F, G \in [X]^{<\omega}$ ($F, G \in [X]^n$) such that $F \cap G=\emptyset$ there is a member $U \in \mathcal{U}$ such that $F \subset U$ and $G \cap U=\emptyset$.
\end{definition}

In the following proposition we collect a few trivial facts about $\iota$-covers and their relationship with $\omega$-covers.

\begin{proposition}{\ }
\begin{enumerate}
\item Every $\iota$-cover is an $\omega$-cover.
\item Any open $\omega$-cover of a $T_1$ topological space has an open refinement that is an $\iota$-cover.
\item Any fattening of an $\iota$-cover is an $\omega$-cover.
\end{enumerate}
\end{proposition}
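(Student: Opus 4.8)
The plan is to verify the three items directly from the definitions; none is more than a routine unwinding. For (1), I would apply the $\iota$-cover property of $\mathcal{U}$ to the pair $(F,\emptyset)$: since $F\cap\emptyset=\emptyset$, there is $U\in\mathcal{U}$ with $F\subseteq U$, which is exactly the $\omega$-cover condition. For (3), I would recall that a fattening of $\mathcal{U}=\{U_\alpha:\alpha\in A\}$ is a family $\{V_\alpha:\alpha\in A\}$ with $U_\alpha\subseteq V_\alpha$ for all $\alpha$; then, given a finite $F\subseteq X$, item (1) says $\mathcal{U}$ is an $\omega$-cover, so $F\subseteq U_\alpha\subseteq V_\alpha$ for some $\alpha$. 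I would also add the remark that a fattening need not remain an $\iota$-cover, since enlarging sets can destroy the separation of disjoint finite sets --- this is why (3) only asserts the weaker conclusion.

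The substance, such as it is, lies in (2). Let $\mathcal{U}$ be an open $\omega$-cover of a $T_1$ space $X$, and set
\[
\mathcal{V}=\{\,U\setminus G : U\in\mathcal{U},\ G\in[X]^{<\omega}\,\}.
\]
Since $X$ is $T_1$, finite sets are closed, hence each $U\setminus G$ is open. Taking $G=\emptyset$ shows $\mathcal{U}\subseteq\mathcal{V}$, so $\mathcal{V}$ covers $X$; and every member of $\mathcal{V}$ is contained in a member of $\mathcal{U}$, so $\mathcal{V}$ is an open refinement of $\mathcal{U}$. To check that $\mathcal{V}$ is an $\iota$-cover, let $F,G\in[X]^{<\omega}$ be disjoint; as $\mathcal{U}$ is an $\omega$-cover, choose $U\in\mathcal{U}$ with $F\subseteq U$, and then $U\setminus G\in\mathcal{V}$ satisfies $F\subseteq U\setminus G$ (because $F\cap G=\emptyset$) and $(U\setminus G)\cap G=\emptyset$.

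I do not expect a genuine obstacle anywhere. The one place where a hypothesis is actually invoked --- and the only step that is not essentially a tautology --- is the openness of $U\setminus G$ in (2), and that is immediate from $T_1$.
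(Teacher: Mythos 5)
Your proof is correct, and it is the standard argument the paper has in mind: the proposition is stated without proof as a collection of trivial facts, and your verification (applying the $\iota$-property to $(F,\emptyset)$ for (1), shrinking by finite sets, which are closed in a $T_1$ space, for (2), and combining (1) with monotonicity for (3)) is exactly the intended routine unwinding of the definitions.
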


\begin{definition}
We call a space $X$ an $\epsilon$-space if every open $\omega$-cover of $X$ has a countable $\omega$-subcover.
\end{definition}

\begin{definition} \label{defiotaspace}
We call a space $X$ an \emph{$\iota$-space} (\emph{$n$-ota space}) if every open $\iota$-cover ($n$-ota cover) of $X$ has a countable refinement which is an $\iota$-cover ($n$-ota cover).
\end{definition}

\begin{remark}
In Definition $\ref{defiotaspace}$ we used \emph{refinement} rather than \emph{subcover} because the class of spaces where every $\iota$-cover has a countable $\iota$-subcover coincides with the class of countable spaces. Indeed if $X$ is uncountable and $T_1$ then $\{X \setminus F : F \in [X]^{<\omega} \}$ is an $\iota$-cover without a countable $\iota$-subcover.
\end{remark}

While every space $X$ has a countable open $\omega$-cover (simply consider  $\{X\}$), not all spaces have countable $\iota$-covers.

\begin{example} \label{noiotacover}
A compact $T_2$ space of size $\omega_1$ without a countable $\iota$-cover.
\end{example}

\begin{proof}
Let $X=D \cup \{p\}$ be the one-point compactification of a discrete set of size $\aleph_1$, where $p$ is the unique non-isolated point. Suppose by contradiction that $X$ has a countable $\iota$-cover $\mathcal{U}$ and let $\mathcal{U}_p=\{U \in \mathcal{U}: p \in U \}$. The set $\mathcal{U}_p$ is countable and every element of $\mathcal{U}_p$ is a cofinite set. Therefore, the set $\bigcap \mathcal{U}_p$ is uncountable and hence we can fix distinct points $x, y \in \bigcap \mathcal{U}_p$. But then $\mathcal{U}$ has no element containing $\{p,x\}$ and missing $\{y\}$. Therefore $\mathcal{U}$ is not an $\iota$-cover.
\end{proof}

In view of Example $\ref{noiotacover}$ it makes sense to consider the following class of spaces.

\begin{definition}
We call a space $X$ an $\iota_w$-space if it has a countable open $\iota$-cover.
\end{definition}

Every $\iota$-space is certainly an $\iota_w$-space, but the converse is far from being true.

\begin{proposition} \label{bound}
Let $X$ be an $\iota_w$-space. Then $|X| \leq \mathfrak{c}$.
\end{proposition}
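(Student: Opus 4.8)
The plan is to exploit the fact that a countable $\iota$-cover separates points, and then embed $X$ into $\mathcal{P}(\omega)$ via the natural ``trace'' map. Concretely, fix a countable open $\iota$-cover $\mathcal{U} = \{U_n : n \in \omega\}$ of $X$ (which exists by the definition of $\iota_w$-space), and define $\varphi \colon X \to \mathcal{P}(\omega)$ by $\varphi(x) = \{n \in \omega : x \in U_n\}$. Since $|\mathcal{P}(\omega)| = \mathfrak{c}$, it suffices to show that $\varphi$ is injective.

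The one thing to check is that $\varphi$ is one-to-one, and this is precisely where the $\iota$-cover hypothesis (as opposed to the weaker $\omega$-cover hypothesis) is used. Given distinct points $x, y \in X$, the sets $F = \{x\}$ and $G = \{y\}$ are disjoint finite subsets of $X$, so by the definition of $\iota$-cover there is some $U_n \in \mathcal{U}$ with $F \subseteq U_n$ and $G \cap U_n = \emptyset$; that is, $x \in U_n$ and $y \notin U_n$. Hence $n \in \varphi(x) \setminus \varphi(y)$, so $\varphi(x) \neq \varphi(y)$. This proves injectivity, and therefore $|X| \leq |\mathcal{P}(\omega)| = \mathfrak{c}$.

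There is no serious obstacle here; the proof is a direct unwinding of the definitions, and the only subtlety worth flagging is that an $\omega$-cover would not suffice for the point-separation step (indeed $\{X\}$ is a countable open $\omega$-cover of any space). It may be worth remarking afterwards that the bound is sharp, since for instance the space constructed later as a regular $\iota_w$-space (or any separable metric space of size $\mathfrak{c}$, such as $\mathbb{R}$ with a countable base serving as an $\iota$-cover) realizes $|X| = \mathfrak{c}$.
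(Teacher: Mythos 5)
Your proof is correct and is essentially the same as the paper's: both define the trace map $x \mapsto \{U \in \mathcal{U} : x \in U\}$ into the power set of the countable $\iota$-cover and use the separation property (with $F=\{x\}$, $G=\{y\}$) to get injectivity. You simply spell out the injectivity step a bit more explicitly than the paper does.
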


\begin{proof}
Let $\mathcal{U}$ be a countable open $\iota$-cover for $X$. Define a map $f: X \to [\mathcal{U}]^\omega$ as follows: $f(x)=\{U \in \mathcal{U}: x\in U \}$. Since $\mathcal{U}$ is an $\iota$-cover, $f$ is a one-to-one map. Therefore $|X| \leq |[\mathcal{U}]^\omega|=\mathfrak{c}$.
\end{proof}

\begin{corollary}
The discrete space of size $\kappa$ is an $\iota_w$-space if and only if $\kappa \leq \mathfrak{c}$.
\end{corollary}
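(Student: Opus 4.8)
The plan is to prove the two implications separately; the forward direction is immediate and the reverse direction carries all the content. For $(\Rightarrow)$: if the discrete space $D$ of size $\kappa$ is an $\iota_w$-space, then Proposition \ref{bound} applied to $D$ gives $\kappa = |D| \le \mathfrak{c}$ with nothing further to do.

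For $(\Leftarrow)$ I would first reduce to the case $\kappa = \mathfrak{c}$. The key observation here is that being an $\iota_w$-space is hereditary: if $\mathcal{U}$ is a countable open $\iota$-cover of a space $X$ and $Y \subseteq X$, then $\{U \cap Y : U \in \mathcal{U}\}$ is a countable open $\iota$-cover of $Y$, since for disjoint finite $F, G \subseteq Y \subseteq X$ one just picks $U \in \mathcal{U}$ with $F \subseteq U$ and $G \cap U = \emptyset$ and notes $F \subseteq U \cap Y$, $G \cap (U \cap Y) = \emptyset$. As the discrete space of any $\kappa \le \mathfrak{c}$ is a subspace of the discrete space of size $\mathfrak{c}$, it suffices to exhibit a countable open $\iota$-cover of the latter. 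To do this I would identify the underlying set with $2^\omega$, set $[s] = \{x \in 2^\omega : s \subseteq x\}$ for $s \in 2^{<\omega}$, and let $\mathcal{U}$ be the (countable) family of all finite unions of sets of the form $[s]$. Every member of $\mathcal{U}$ is open in the discrete topology, so only the $\iota$-cover condition needs checking: given disjoint finite $F, G \subseteq 2^\omega$, for each $f \in F$ choose $m_f \in \omega$ large enough that $f \restriction m_f \ne g \restriction m_f$ for all $g \in G$ (possible since $G$ is finite and $f \ne g$ for each $g \in G$), put $s_f = f \restriction m_f$, and take $U = \bigcup_{f \in F} [s_f] \in \mathcal{U}$; then $F \subseteq U$ and $G \cap U = \emptyset$.

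The only point that requires a moment's thought — and the place where a naive attempt fails — is that one cannot simply take a countable base as the $\iota$-cover: an $\iota$-cover must separate a finite set $F$ from a finite set $G$, not merely a single point from $G$, so when $|F| > 1$ no single basic clopen set works and the passage to finite unions is essential. Once that is recognised the verification above is just bookkeeping. (Equivalently, one may phrase the construction topologically: any injection of $\kappa$ into $2^\omega$ pulls back a coarser second countable $T_1$ topology, and in a second countable $T_1$ space the finite unions of a countable base form a countable open $\iota$-cover — a cover whose members are, a fortiori, open in the discrete topology.)
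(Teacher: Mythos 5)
Your proposal is correct and follows essentially the same route as the paper: the forward direction is Proposition \ref{bound}, and the reverse direction fixes a coarser separable metric (second-countable) topology on the set and extracts a countable open $\iota$-cover from finite unions of basic open sets, which are a fortiori open in the discrete topology. The paper states this more tersely (``fix a separable metric topology $\tau$ on $\kappa$; any $\iota$-refinement of $\tau$ provides an open $\iota$-cover''), while you spell out the $2^\omega$ construction and the (correct, and harmless) reduction to $\kappa=\mathfrak{c}$ via heredity of the $\iota_w$ property; your remark that single basic sets do not suffice and finite unions are needed is exactly the right point to flag.
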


\begin{proof}
If $\kappa \leq \mathfrak{c}$ fix a separable metric topology $\tau$ on $\kappa$. Any $\iota$-refinement of $\tau$ provides an open $\iota$-cover of $\kappa$ with the discrete topology. The converse follows from Proposition $\ref{bound}$.
\end{proof}

There is, however, a natural relationship between $\iota$-spaces and $\iota_w$-spaces.

\begin{theorem}\label{epsilon+weakiota}
A space $X$ is an $\iota$-space if and only if it is an $\epsilon$-space and an $\iota_w$-space.
\end{theorem}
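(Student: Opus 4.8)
The plan is to prove both implications directly, using only the earlier observations that every $\iota$-cover is an $\omega$-cover and that every open $\omega$-cover of a $T_1$ space admits an open refinement which is an $\iota$-cover.

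First I would do the forward implication. Suppose $X$ is an $\iota$-space. For the $\iota_w$ part, note that since $X$ is $T_1$ the cofinite family $\mathcal{C}=\{X\setminus F:F\in[X]^{<\omega}\}$ is an open $\iota$-cover of $X$ — given disjoint finite $F,G$, the set $X\setminus G$ contains $F$ and misses $G$ — so by hypothesis $\mathcal{C}$ has a countable refinement which is an $\iota$-cover, and this is a countable open $\iota$-cover witnessing that $X$ is an $\iota_w$-space. For the $\epsilon$ part, let $\mathcal{U}$ be an open $\omega$-cover; it has an open refinement $\mathcal{U}'$ that is an $\iota$-cover, and since $X$ is an $\iota$-space $\mathcal{U}'$ has a countable refinement $\mathcal{W}$ that is an $\iota$-cover, hence an $\omega$-cover. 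As $\mathcal{W}$ refines $\mathcal{U}'$ which refines $\mathcal{U}$, it refines $\mathcal{U}$; picking $U_W\in\mathcal{U}$ with $W\subseteq U_W$ for each $W\in\mathcal{W}$, the countable family $\{U_W:W\in\mathcal{W}\}\subseteq\mathcal{U}$ is still an $\omega$-cover (any finite set lies in some $W$, hence in $U_W$), so $X$ is an $\epsilon$-space.

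Next the converse. Suppose $X$ is an $\epsilon$-space and an $\iota_w$-space, and let $\mathcal{U}$ be an open $\iota$-cover. Since $\mathcal{U}$ is in particular an open $\omega$-cover and $X$ is an $\epsilon$-space, fix a countable $\omega$-subcover $\{U_n:n<\omega\}\subseteq\mathcal{U}$; since $X$ is an $\iota_w$-space, fix a countable open $\iota$-cover $\{V_m:m<\omega\}$. I would then check that $\mathcal{W}=\{U_n\cap V_m:n,m<\omega\}$ is the required countable open refinement of $\mathcal{U}$ that is an $\iota$-cover: every $U_n\cap V_m$ is open and contained in $U_n\in\mathcal{U}$, so $\mathcal{W}$ refines $\mathcal{U}$; and given disjoint finite $F,G$, choose $m$ with $F\subseteq V_m$ and $G\cap V_m=\emptyset$ and $n$ with $F\subseteq U_n$, so that $F\subseteq U_n\cap V_m$ while $G\cap(U_n\cap V_m)\subseteq G\cap V_m=\emptyset$.

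The argument is not long, and there is no real obstacle; the one idea worth isolating is the choice of $\mathcal{W}$ in the converse, where the $\omega$-subcover $\{U_n\}$ is used to cover $F$, the countable $\iota$-cover $\{V_m\}$ is used to separate $F$ from $G$, and intersecting keeps every member inside some element of $\mathcal{U}$. The only spot needing a moment's attention is the $\iota_w$ half of the forward direction, where one uses $T_1$-ness to be sure that the cofinite cover really is an open $\iota$-cover before invoking the $\iota$-space hypothesis.
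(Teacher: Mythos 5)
Your proof is correct and the substantive (converse) direction is exactly the paper's argument: intersect a countable $\omega$-subcover obtained from the $\epsilon$-property with a fixed countable open $\iota$-cover. The forward direction, which the paper dismisses as trivial, you spell out correctly via the cofinite cover and the refinement of $\omega$-covers to $\iota$-covers; your observation that this uses $T_1$ is a fair point about a hypothesis the paper leaves implicit.
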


\begin{proof}
The direct implication is trivial. To prove the converse implication, fix a countable $\iota$-cover $\mathcal{C}$ for $X$ and let $\mathcal{U}$ be any open $\iota$-cover.  Then $\mathcal{U}$ is also an $\omega$-cover, and since $X$ is an $\epsilon$-space we can find a countable $\omega$-subcover $\mathcal{V}$ of $\mathcal{U}$. The set $\{U \cap V: U \in \mathcal{C}, V \in \mathcal{V} \}$ is then a countable $\iota$-refinement of $\mathcal{U}$.
\end{proof}

There are hereditarily Lindel\"of spaces which are $\iota_w$-spaces, but not $\iota$-spaces. One such example is the Sorgenfrey line. Indeed, since its topology is a refinement of the topology of the real line, it has a countable $\iota$-cover, but its square is not Lindel\"of and hence it's not an $\iota$-space. Note that by Theorem $\ref{epsilon+weakiota}$ if $X$ is a subspace of the Sorgenfrey Line, then $X$ is an $\epsilon$-space if and only if $X$ is an $\iota$-space.

\begin{theorem}
Let $X$ be a Tychonoff space such that $C_p(X)$ is separable and has countable tightness. Then $X$ is an $\iota$-space.
\end{theorem}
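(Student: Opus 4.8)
The plan is to apply Theorem \ref{epsilon+weakiota}, which reduces the claim to showing that $X$ is simultaneously an $\epsilon$-space and an $\iota_w$-space. The first property will come from countable tightness of $C_p(X)$, the second from its separability; the two hypotheses are used independently of one another.

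For the $\epsilon$-space part I would invoke the classical Arhangel'skii--Pytkeev theorem: for a Tychonoff space $X$, $C_p(X)$ has countable tightness if and only if every finite power $X^n$ is Lindel\"of. Only the forward direction is needed, and since ``all finite powers Lindel\"of'' is precisely the definition of an $\epsilon$-space, this immediately yields that $X$ is an $\epsilon$-space.

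For the $\iota_w$-space part I would first extract from separability of $C_p(X)$ a condensation (continuous bijection) of $X$ onto a separable metrizable space. Fixing a countable dense set $\{f_n : n \in \omega\}$ in $C_p(X)$, the diagonal map $e : X \to \mathbb{R}^\omega$ given by $e(x) = (f_n(x))_{n \in \omega}$ is continuous, and it is injective precisely because $X$ is Tychonoff: distinct points $x, y$ are separated by some $g \in C(X)$, so the point-evaluation functionals at $x$ and at $y$ are distinct continuous maps on $C_p(X)$ and hence cannot agree on the dense set $\{f_n\}$, i.e.\ $f_n(x) \neq f_n(y)$ for some $n$. Thus $e$ is a continuous bijection of $X$ onto the second-countable space $Y := e(X) \subseteq \mathbb{R}^\omega$. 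Next I would note that any $T_1$ space $Y$ with a countable base $\mathcal{B}$ has a countable open $\iota$-cover, namely the (still countable) family $\mathcal{B}^*$ of finite unions of members of $\mathcal{B}$: given disjoint finite $F, G \subseteq Y$, the set $Y \setminus G$ is open, so each point of $F$ has a basic neighborhood contained in it, and the union of these finitely many basic sets lies in $\mathcal{B}^*$, contains $F$, and misses $G$. Finally, pulling $\mathcal{B}^*$ back along $e$ gives the countable open family $\{e^{-1}(W) : W \in \mathcal{B}^*\}$, which is an $\iota$-cover of $X$ because $e$ is a bijection: disjoint finite subsets of $X$ have disjoint finite images in $Y$, and the preimage of a set disjoint from $e[G]$ is disjoint from $G$. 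Hence $X$ is an $\iota_w$-space, and Theorem \ref{epsilon+weakiota} completes the proof.

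I do not anticipate a serious obstacle here; the argument is essentially an assembly of standard facts of $C_p$-theory. The one point deserving attention is the role of the Tychonoff hypothesis: it is exactly what makes the diagonal map injective, so that separability of $C_p(X)$ genuinely produces a condensation onto a metrizable space, and it is the injectivity of that condensation — not merely its continuity — that upgrades the pulled-back cover from an $\omega$-cover to an $\iota$-cover.
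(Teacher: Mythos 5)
Your proposal is correct and follows exactly the same route as the paper: reduce via Theorem \ref{epsilon+weakiota} to showing $X$ is an $\epsilon$-space (from countable tightness of $C_p(X)$, by the Arhangel'skii--Pytkeev theorem) and an $\iota_w$-space (from separability of $C_p(X)$ via a condensation onto a separable metrizable space). The only difference is that you spell out the standard $C_p$-theory facts and the passage from a coarser second-countable topology to a countable open $\iota$-cover, which the paper simply cites or calls easy.
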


\begin{proof}
From \cite{A}, $C_p(X)$ has countable tightness if and only if $X$ is an $\epsilon$-space and $C_p(X)$ is separable if and only if $X$ has a one-to-one continuous map onto a separable metrizable space. It's easy to see that this last condition is equivalent to $X$ having a coarser second-countable topology. But this easily implies that $X$ has a countable $\iota$-cover, that is, $X$ is an $\iota_w$-space.
\end{proof}

\begin{corollary}
Let $X$ be a Tychonoff space. Suppose $C_p(X)$ is hereditarily separable. Then $X$ is an $\iota$-space.
\end{corollary}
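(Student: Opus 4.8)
The plan is to reduce the Corollary to the Theorem immediately preceding it, whose hypotheses are that $C_p(X)$ be separable and have countable tightness. Hereditary separability of $C_p(X)$ trivially gives separability, so the entire task comes down to checking that it also gives countable tightness.

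To that end I would first isolate the elementary observation that every hereditarily separable $T_1$ space $Y$ has countable tightness. Indeed, let $A \subseteq Y$ and $p \in \overline{A}$; we may assume $p \notin A$. The subspace $A \cup \{p\}$ is separable, so fix a countable dense set $D \subseteq A \cup \{p\}$. Since $p \in \overline{A}$, the point $p$ is not isolated in $A \cup \{p\}$; combining this with the fact that $\{p\}$ is closed (this is where $T_1$ is used) one checks that for every nonempty relatively open $U \subseteq A \cup \{p\}$ the set $U \setminus \{p\}$ is again nonempty, relatively open, and contained in $A$, hence meets $D$. Thus $D \cap A$ is dense in $A \cup \{p\}$, so $p \in \overline{D \cap A}$ with $D \cap A$ a countable subset of $A$. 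Applying this with $Y = C_p(X)$ — which is Tychonoff, being a subspace of $\mathbb{R}^X$, and in particular $T_1$ — we obtain that $C_p(X)$ is separable and of countable tightness, and the preceding Theorem then yields that $X$ is an $\iota$-space.

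I do not anticipate a genuine obstacle here: the lemma is routine, and the only point that needs a little care is passing to $D \cap A$ rather than to $D$ itself (which might contain $p$), which is exactly where the $T_1$ separation of $C_p(X)$ enters. As an alternative one could bypass the lemma entirely by invoking the $C_p$-theoretic duality $hd(C_p(X)) = \sup_n hl(X^n)$: hereditary separability of $C_p(X)$ then forces every finite power of $X$ to be hereditarily Lindel\"of, in particular Lindel\"of, so $X$ is an $\epsilon$-space, and countable tightness of $C_p(X)$ follows from the result of \cite{A}. The elementary route, however, seems cleaner and more self-contained.
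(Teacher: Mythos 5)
Your proposal is correct and matches the paper's (implicit) argument: the corollary is stated without proof precisely because it reduces to the preceding theorem once one observes that hereditary separability of $C_p(X)$ yields both separability and countable tightness. Your lemma is fine but can be streamlined — a countable dense subset $D$ of the subspace $A$ already satisfies $\overline{A} \subseteq \overline{D}$, so $p \in \overline{D}$ directly, with no separation axiom and no passage to $A \cup \{p\}$ needed.
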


\begin{proposition}{\ } \label{subspaceprop}
\begin{enumerate}
\item \label{noniotaclosed} Let $(X, \tau)$ be an $\iota$-space, then every closed subspace is an $\iota$-space.  
\item \label{noniotaany} Let $(X, \tau)$ be an $\iota_w$-space, then every subspace is an $\iota_w$-space.
\end{enumerate}
\end{proposition}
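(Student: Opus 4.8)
The plan is to dispatch (2) by a direct ``trace'' argument and to reduce (1) to Theorem~\ref{epsilon+weakiota}.

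For (2): fix a countable open $\iota$-cover $\mathcal{U}$ of $X$ and let $Y\subseteq X$ be arbitrary. I claim that $\{U\cap Y: U\in\mathcal{U}\}$ witnesses that $Y$ is an $\iota_w$-space. It is countable and consists of relatively open subsets of $Y$, so only the $\iota$-cover property needs checking: if $F,G\in[Y]^{<\omega}$ are disjoint then they are disjoint finite subsets of $X$, so there is $U\in\mathcal{U}$ with $F\subseteq U$ and $G\cap U=\emptyset$, whence $F\subseteq U\cap Y$ and $G\cap(U\cap Y)=\emptyset$. This step is entirely routine and uses nothing about $Y$ (in particular, closedness is irrelevant here).

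For (1): let $Y$ be a closed subspace of an $\iota$-space $X$. By Theorem~\ref{epsilon+weakiota}, being an $\iota$-space is equivalent to being simultaneously an $\epsilon$-space and an $\iota_w$-space, so $X$ is both; it therefore suffices to show that $Y$ inherits both properties, since then a second application of Theorem~\ref{epsilon+weakiota} shows $Y$ is an $\iota$-space. That $Y$ is an $\iota_w$-space is exactly part (2). That $Y$ is an $\epsilon$-space follows from the observation that for each positive integer $n$ the power $Y^n$ is a closed subspace of $X^n$ (a finite product of closed subspaces is closed), and $X^n$ is Lindel\"of because $X$ is an $\epsilon$-space; since a closed subspace of a Lindel\"of space is Lindel\"of, $Y^n$ is Lindel\"of, i.e.\ $Y$ is an $\epsilon$-space.

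I do not anticipate a real obstacle, but it is worth flagging where closedness is genuinely used: only in the $\epsilon$-space step, and necessarily so, since the $\epsilon$-property is not hereditary (whereas $\iota_w$ is, by (2)). It is also instructive to record why the more obvious route fails. One might try to extend an open $\iota$-cover $\mathcal{U}$ of $Y$ to an open $\iota$-cover of $X$ --- say by picking open sets $\widehat U$ in $X$ with $\widehat U\cap Y=U$ and forming $\{\widehat U:U\in\mathcal{U}\}\cup\{X\setminus Y\}$ (open precisely because $Y$ is closed) --- then take a countable $\iota$-refinement in $X$ and restrict it back to $Y$. The difficulty is that such a family need not be an $\iota$-cover of $X$: a finite set $F$ with both $F\cap Y\neq\emptyset$ and $F\setminus Y\neq\emptyset$ is contained in no single member of it, so the $\omega$-cover condition already fails. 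Routing through Theorem~\ref{epsilon+weakiota} bypasses this issue completely.
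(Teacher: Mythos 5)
Your argument is correct, but for part (1) it takes a genuinely different route from the paper. The paper proves (1) directly: starting from an open ($\iota$-)cover $\mathcal{U}_Y$ of the closed subspace $Y$, it builds the family $\mathcal{V}=\{(U\setminus F)\cup V: U\cap Y\in\mathcal{U}_Y,\ F\in[Y]^{<\omega},\ V\cap Y=\emptyset\}$, checks that this is an open $\iota$-cover of all of $X$, takes a countable $\iota$-refinement there, and traces it back to $Y$. Your closing remark about why the naive extension $\{\widehat U\}\cup\{X\setminus Y\}$ fails is exactly the obstruction that the extra ingredients $F$ and $V$ in the paper's family are designed to circumvent. You instead factor (1) through Theorem~\ref{epsilon+weakiota}: $Y$ is $\iota_w$ by your part (2) (which coincides with the paper's trace argument), and $Y$ is an $\epsilon$-space because $Y^n$ is closed in the Lindel\"of space $X^n$. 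Since Theorem~\ref{epsilon+weakiota} precedes the proposition and does not depend on it, there is no circularity, and your proof is arguably cleaner and easier to verify than the paper's sketch. Two trade-offs are worth noting. First, your route leans on the equivalence between the $\omega$-cover definition of $\epsilon$-space and ``all finite powers Lindel\"of'' (Gerlits--Nagy), which the paper asserts in the introduction but does not prove; the paper's direct construction is self-contained in that respect. Second, the paper's extension-and-trace technique is reusable where yours is not: the later Note that the \emph{almost}-$\iota$ property of Definition~\ref{almostiota} is closed hereditary requires essentially the same direct argument, since almost-$\iota$ spaces need not be $\epsilon$-spaces and Theorem~\ref{epsilon+weakiota} is unavailable there.
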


\begin{proof}
To prove ($\ref{noniotaclosed}$) suppose $Y$ is a closed subspace of the $\iota$-space $X$. Fix an open cover $\mathcal{U}_Y$ of $Y$. Let $\mathcal{U}=\{U \in \tau: U \cap Y \in \mathcal{U}_Y \}$ and $\mathcal{U}^Y=\{U \in \tau: U \cap Y=\emptyset \}$. 

Let $$\mathcal{V}=\{(U \setminus F) \cup V: U \in \mathcal{U}, F \in [Y]^{<\omega} , V \in \mathcal{U}^Y \}$$

Then $\mathcal{V}$ is an $\iota$-cover for the whole space $X$ and the trace of any countable $\iota$-refinement of $\mathcal{V}$ on $Y$ is a countable $\iota$-refinement of $\mathcal{U}_Y$.

The proof of ($\ref{noniotaany}$) is similar and even easier.
\end{proof}

\begin{corollary}
Let $\{X_i: i \in I\}$ be a family of spaces, where $|X_i| \geq 2$ and $|I| \geq \aleph_1$. Then $\prod_{i \in I} X_i$ is not an $\iota_w$-space.
\end{corollary}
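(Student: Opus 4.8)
The plan is to locate inside $\prod_{i\in I}X_i$ a subspace of cardinality $\ge\aleph_1$ that has no countable open $\iota$-cover, and then to invoke part (\ref{noniotaany}) of Proposition \ref{subspaceprop} in its contrapositive form. For each $i\in I$ fix a point $a_i\in X_i$ together with a second point $b_i\in X_i\setminus\{a_i\}$; this is possible since $|X_i|\ge 2$. Let $\bar a=(a_i)_{i\in I}$ and, for each $j\in I$, let $\bar a_j$ be the point of the product that agrees with $\bar a$ on every coordinate $i\ne j$ and takes the value $b_j$ at coordinate $j$. The subspace I want is
\[
Y=\{\bar a\}\cup\{\bar a_j:j\in I\}.
\]
Since $b_j\ne a_j$ the points $\bar a_j$ are pairwise distinct and distinct from $\bar a$, so $|Y|=|I|\ge\aleph_1$. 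This $Y$ is a (possibly non-Hausdorff) analogue of the one-point compactification of a discrete space of size $\aleph_1$ from Example \ref{noiotacover}, and the argument will follow the one given there.

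The one structural fact needed about $Y$ is this: \emph{every open neighbourhood of $\bar a$ in $Y$ contains all but finitely many of the points $\bar a_j$.} This is the only computation in the argument, and it is short. An open neighbourhood of $\bar a$ in $Y$ contains a set of the form $W\cap Y$, where $W=\bigcap_{k=1}^{n}\pi_{i_k}^{-1}(O_k)$ is a basic open set of $\prod_{i\in I}X_i$ with $\bar a\in W$. For every index $j\in I\setminus\{i_1,\dots,i_n\}$ the point $\bar a_j$ agrees with $\bar a$ on each of the coordinates $i_1,\dots,i_n$, so $\bar a_j\in W\cap Y$; thus the neighbourhood omits at most the finitely many points $\bar a_{i_1},\dots,\bar a_{i_n}$. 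Note that no separation hypothesis on the factors is used here.

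With this in hand I would argue exactly as in Example \ref{noiotacover}. Suppose toward a contradiction that $\mathcal U$ is a countable open $\iota$-cover of $Y$, and put $\mathcal U_{\bar a}=\{U\in\mathcal U:\bar a\in U\}$. By the previous paragraph each member of $\mathcal U_{\bar a}$ omits only finitely many of the $\bar a_j$, so, $\mathcal U_{\bar a}$ being countable, only countably many indices $j$ satisfy $\bar a_j\notin\bigcap\mathcal U_{\bar a}$. Since $|I|\ge\aleph_1$ I may pick two distinct indices $j,k$ with $\bar a_j,\bar a_k\in\bigcap\mathcal U_{\bar a}$. Then $F=\{\bar a,\bar a_j\}$ and $G=\{\bar a_k\}$ are disjoint finite subsets of $Y$, yet any $U\in\mathcal U$ with $F\subseteq U$ lies in $\mathcal U_{\bar a}$ and hence also contains $\bar a_k$; so no member of $\mathcal U$ both contains $F$ and misses $G$, contradicting the assumption that $\mathcal U$ is an $\iota$-cover. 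Therefore $Y$ has no countable open $\iota$-cover, i.e.\ $Y$ is not an $\iota_w$-space, and by Proposition \ref{subspaceprop}(\ref{noniotaany}) neither is $\prod_{i\in I}X_i$.

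The proof is essentially routine once $Y$ has been chosen, and I do not expect any serious obstacle. The one subtlety worth flagging is that $Y$ need not literally be a one-point compactification — the points $\bar a_j$ may fail to be isolated when some factor is, for instance, a Sierpi\'nski space — so one cannot simply quote Example \ref{noiotacover} and must instead rerun its argument for $Y$, which, fortunately, goes through unchanged because the only property of $Y$ it really uses is the neighbourhood fact established in the second paragraph.
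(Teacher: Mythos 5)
Your proof is correct and follows the same underlying idea as the paper's: locate inside the product a subspace that behaves like the one-point compactification of an uncountable discrete space, observe (as in Example \ref{noiotacover}) that such a subspace admits no countable open $\iota$-cover, and conclude via Proposition \ref{subspaceprop}(\ref{noniotaany}). The paper gets there more quickly by asserting that the product contains a copy of $2^{|I|}$, and hence of the one-point compactification itself; you instead build the set $Y=\{\bar a\}\cup\{\bar a_j:j\in I\}$ directly and rerun the argument of Example \ref{noiotacover}, using only the fact that every neighbourhood of $\bar a$ in $Y$ omits at most finitely many of the $\bar a_j$. This costs a little more writing but is also a little more robust: the claim that $\prod_{i\in I}X_i$ contains a copy of $2^{|I|}$ requires the two chosen points in each factor to be topologically distinguishable (e.g.\ the factors to be $T_1$), which the corollary's hypotheses do not supply, whereas your neighbourhood computation needs no separation axiom at all. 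The individual steps all check out: the $\bar a_j$ are pairwise distinct, the count showing that all but countably many $\bar a_j$ lie in $\bigcap\mathcal U_{\bar a}$ is right, and the pair $F=\{\bar a,\bar a_j\}$, $G=\{\bar a_k\}$ produces the contradiction exactly as in the example.
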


\begin{proof}
Simply note that $\prod_{i \in I} X_i$ contains a copy of $2^{|I|}$, which in turn contains a copy of the one-point compactification of a discrete space of size $|I|$ and that this space is not an $\iota_w$-space. 
\end{proof}

\begin{theorem} \label{productiw}
Let $\{X_i: i < \omega \}$ be a countable family of $\iota_w$-spaces. Then $X:=\prod_{i <\omega} X_i$ is an $\iota_w$-space.
\end{theorem}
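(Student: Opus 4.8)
The plan is to construct, from the countable open $\iota$-covers of the factors, an explicit countable open $\iota$-cover of $X$. First, for each $i<\omega$ fix a countable open $\iota$-cover $\mathcal{U}_i=\{U^i_n : n<\omega\}$ of $X_i$, available since $X_i$ is an $\iota_w$-space, and write $\pi_i\colon X\to X_i$ for the projection. For a finite set $s\subseteq\omega$ and a function $\sigma\colon s\to\omega$, let the \emph{basic box} determined by $(s,\sigma)$ be
\[
  B_{s,\sigma}=\bigcap_{i\in s}\pi_i^{-1}\bigl(U^i_{\sigma(i)}\bigr),
\]
an open subset of $X$; there are only countably many basic boxes. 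I would take as the candidate cover the family $\mathcal{W}$ of all finite (possibly empty) unions of basic boxes. This $\mathcal{W}$ is a countable family of open sets, so the whole problem reduces to verifying that $\mathcal{W}$ is an $\iota$-cover.

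The key step is an auxiliary observation: for any single point $x\in X$ and any finite $G\subseteq X$ with $x\notin G$, there is a basic box $B$ with $x\in B$ and $B\cap G=\emptyset$. To see this, for each $y\in G$ pick a coordinate $k_y<\omega$ with $x(k_y)\neq y(k_y)$, set $s=\{k_y:y\in G\}$, and for $k\in s$ let $E_k=\{\,y(k):y\in G,\ k_y=k\,\}$. Then $E_k$ is finite and $x(k)\notin E_k$, so applying the $\iota$-cover property of $\mathcal{U}_k$ to the disjoint finite pair $\bigl(\{x(k)\},E_k\bigr)$ yields some $\sigma(k)<\omega$ with $x(k)\in U^k_{\sigma(k)}$ and $U^k_{\sigma(k)}\cap E_k=\emptyset$. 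The box $B_{s,\sigma}$ then contains $x$, and each $y\in G$ is excluded from it at coordinate $k_y$.

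With this in hand, given disjoint $F,G\in[X]^{<\omega}$ I would apply the observation to every $x\in F$ (with the same $G$), obtaining basic boxes $B_x\ni x$ with $B_x\cap G=\emptyset$; then $\bigcup_{x\in F}B_x\in\mathcal{W}$ contains $F$ and misses $G$, so $\mathcal{W}$ is an $\iota$-cover and $X$ is an $\iota_w$-space. The hard part is not the bookkeeping but the realization that one cannot hope to separate $F$ from $G$ with a single basic box: a point $y\notin F$ may agree with some member of $F$ in every coordinate (e.g.\ $F=\{(a,b,a,b,\dots),(c,d,c,d,\dots)\}$ and $y=(c,b,c,b,\dots)$), so no single box around $F$ avoids $y$. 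Passing to finite unions of boxes and treating $F$ one point at a time is exactly what circumvents this; everything else is routine. (The same argument, with $s$ ranging over finite subsets of the index set, also shows that finite products of $\iota_w$-spaces are $\iota_w$-spaces.)
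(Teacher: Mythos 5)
Your proof is correct and follows essentially the same strategy as the paper's: separate $F$ from $G$ by choosing, for each $x\in F$, a box built from members of the factor covers that contains $x$ and misses $G$, and then take as the countable $\iota$-cover the family of all finite unions of such boxes. In fact your version is the more careful one: by supporting each box on only finitely many coordinates you guarantee both that the boxes are open in the product topology and that the resulting family is countable, whereas the paper literally uses the full products $\prod_{i<\omega}U_{x_i}$ (not open when infinitely many factors are proper) and indexes its final cover by all sequences $(\mathcal{F}_i)_{i<\omega}$ with $\mathcal{F}_i\in[\mathcal{U}_i]^{<\omega}$ (an uncountable index set), so its write-up implicitly needs exactly the finite-support adjustment you supply.
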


\begin{proof}
Let $\mathcal{U}_i$ be a countable open $\iota$-cover for $X_i$. Consider two disjoint finite subsets $F$ and $G$ of $X$. For every $(x_n)_{n<\omega} \in F$ let $U_{x_i} \in \mathcal{U}_i$ be an open set containing $x_i$ and missing $\{z \in \pi_{X_i}(F \cup G): z \neq x_i \}$. Let now $U= \bigcup \{\prod_{i <\omega} U_{x_i}: (x_i)_{i<\omega} \in F \}$. Then $U$ contains $F$ and misses $G$. Indeed if $G \cap U$ were non-empty then there would be $(x_n)_{n<\omega} \in F$ such that $(\prod_{n<\omega} U_{x_n}) \cap G \neq \emptyset$, but this contradicts the definition of $U_{x_n}$ for $n<\omega$.

It follows that $\{\bigcup \{\prod_{i<\omega} U_i: U_i \in \mathcal{F}_i\}: \mathcal{F}_i \in [\mathcal{U}_i]^{<\omega} \}$ is a countable open $\iota$-cover for $\prod_{i<\omega} X_i$.
\end{proof}

\begin{corollary}
Let $\{X_i: i \in I\}$ be a family of $\iota_w$-spaces. Then $\prod_{i \in I} X_i$ is an $\iota_w$-space if and only if $|I| \leq \omega$.
\end{corollary}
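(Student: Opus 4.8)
The plan is to derive this corollary directly from Theorem~\ref{productiw} and the Corollary immediately preceding it, since together those two results already supply both implications; the only work is in assembling them and patching the finite case of one direction.

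First I would handle the ``if'' direction. If $I$ is countably infinite then $\prod_{i \in I} X_i$ is an $\iota_w$-space immediately by Theorem~\ref{productiw} (reindex $I$ by $\omega$). If $I$ is finite I would reduce to this case by padding: a one-point space carries the open $\iota$-cover $\{\emptyset,\{*\}\}$ and is therefore an $\iota_w$-space, so I attach countably many one-point factors to the family; this changes $\prod_{i\in I}X_i$ only up to homeomorphism, and Theorem~\ref{productiw} now applies to the enlarged family. (Alternatively one could prove the finite case directly by an argument analogous to, but simpler than, the proof of Theorem~\ref{productiw}; the padding trick just avoids repeating it.)

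For the ``only if'' direction I would prove the contrapositive, which is exactly the statement of the preceding Corollary: if $|I| \ge \aleph_1$ then, each $X_i$ having at least two points, selecting two points in every coordinate embeds $2^{|I|}$ into $\prod_{i \in I} X_i$, and $2^{|I|}$ in turn contains a copy of the one-point compactification of a discrete set of size $|I|\ge\aleph_1$. By Example~\ref{noiotacover} that subspace has no countable open $\iota$-cover, and by Proposition~\ref{subspaceprop}(\ref{noniotaany}) any space with a non-$\iota_w$ subspace is not an $\iota_w$-space; hence $\prod_{i\in I}X_i$ is not an $\iota_w$-space.

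I do not anticipate any real obstacle. The two points that deserve a remark are the finite case of the ``if'' direction, dealt with by padding with one-point $\iota_w$-spaces as above, and the tacit hypothesis that each $X_i$ has at least two points: this is needed for the ``only if'' direction so that the copy of $2^{|I|}$ genuinely lies inside the product, empty or one-point factors (which are degenerate $\iota_w$-spaces) being excluded exactly as in the preceding Corollary.
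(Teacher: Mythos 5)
Your proof is correct and matches the paper's (implicit) intent: the paper states this corollary without proof, as an immediate consequence of Theorem \ref{productiw} (giving the ``if'' direction) and the corollary preceding that theorem (giving the ``only if'' direction), which is exactly how you assemble it. Your two side remarks --- padding finite products with singleton factors so that Theorem \ref{productiw} applies verbatim, and the tacit hypothesis $|X_i|\geq 2$ without which the ``only if'' direction fails (e.g.\ an uncountable product of one-point spaces is trivially an $\iota_w$-space) --- are both correct and are points the paper leaves unstated.
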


\section{Countable Network Weight}
It is known that $T_2$ spaces with a countable network are $\epsilon$-spaces, so using Theorem \ref{epsilon+weakiota} we have the following.

\begin{theorem}
 Every $T_2$ space with a countable network is an $\iota$-space.
\end{theorem}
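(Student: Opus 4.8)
The plan is to use Theorem~\ref{epsilon+weakiota}, which reduces the claim to showing two things about a $T_2$ space $X$ with a countable network $\mathcal{N}$: first, that $X$ is an $\epsilon$-space, and second, that $X$ is an $\iota_w$-space, i.e., that $X$ has a countable open $\iota$-cover. The first of these is quoted in the sentence preceding the theorem as a known fact (``$T_2$ spaces with a countable network are $\epsilon$-spaces''), so the real work is producing the countable open $\iota$-cover.

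For the $\iota_w$ part, I would work with a fixed countable network $\mathcal{N}$ and use Hausdorffness to separate points. Given two disjoint finite sets $F, G \subseteq X$, for each pair $(x,y)$ with $x \in F$ and $y \in G$ choose disjoint open sets $U_{x,y} \ni x$ and $V_{x,y} \ni y$; then $W_x := \bigcap_{y \in G} U_{x,y}$ is an open neighbourhood of $x$ disjoint from $G$, and $W := \bigcup_{x \in F} W_x$ is open, contains $F$, and misses $G$. This shows the open sets of $X$ carry an $\iota$-cover (indeed $\tau$ itself is an open $\iota$-cover, using $T_1$ it separates in the one-sided sense as well), but of course $\tau$ need not be countable. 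To cut it down, I would, for each $x \in F$, pick a network element $N_x \in \mathcal{N}$ with $x \in N_x \subseteq W_x$, so that $N_x$ still misses $G$; then $\bigcup_{x \in F} N_x$ is a union of finitely many network elements, contains $F$, and misses $G$. Hence the family $\{\bigcup \mathcal{F} : \mathcal{F} \in [\mathcal{N}]^{<\omega}\}$ is an $\iota$-cover of $X$, and it is countable since $\mathcal{N}$ is. The one subtlety is that the members of this family are finite unions of network elements, which need not be open; so I would instead first refine $\mathcal{N}$ to a countable family of \emph{open} sets that still has the relevant separating property — concretely, replace $\mathcal{N}$ by a countable open collection obtained by noting that in the argument above one may take each $W_x$, hence each $N_x$, from a fixed countable open base-like family.

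This last point is where the care is needed: a countable network need not consist of open sets and need not refine to a countable \emph{open} network. So rather than trying to make the finite-union family open, the cleaner route is to observe directly that the collection $\{\operatorname{int}(\bigcup \mathcal{F}) : \mathcal{F} \in [\mathcal{N}]^{<\omega}\}$ — or better, to run the separation argument to produce, for each finite $F,G$, an \emph{open} set $W$ as above and then note $W \supseteq \bigcup_{x\in F} N_x$ with $N_x \in \mathcal{N}$ chosen inside $W_x$ — still yields an $\iota$-cover once we take the \emph{interiors}. Even simpler: the definition of $\iota_w$-space only requires a countable open $\iota$-cover, and since $\tau$ is an open $\iota$-cover of the $T_2$ (indeed $T_1$) space $X$ and $X$ is an $\epsilon$-space, being an $\epsilon$-space does not itself give countability of a subcover of an $\iota$-cover — so one genuinely must exploit the network. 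I expect the main obstacle to be exactly this openness issue, and I would resolve it by the device above: associate to each finite subfamily $\mathcal{F} \subseteq \mathcal{N}$ the open set $W_\mathcal{F} = \bigcup\{\,$the chosen open $W_x$-type sets witnessing $\mathcal{F}$ separates$\,\}$, organized so that only countably many such $W_\mathcal{F}$ arise.

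Alternatively — and this may be the slickest packaging — I would invoke the earlier results directly: $X$ Hausdorff with countable network is an $\epsilon$-space (quoted), and $X$ has a coarser separable metrizable topology is \emph{not} automatic from a countable network, so that shortcut is unavailable; hence the honest argument is the separation-plus-network construction sketched above, and the theorem then follows immediately from Theorem~\ref{epsilon+weakiota}. So the proof I would write is: ``By Theorem~\ref{epsilon+weakiota} it suffices to check that a $T_2$ space $X$ with countable network $\mathcal{N}$ is an $\epsilon$-space and an $\iota_w$-space. The former is classical. For the latter, [insert the $F,G$-separation argument using $T_2$ to get open $W \supseteq F$ missing $G$, then shrink into finitely many members of $\mathcal{N}$, and take the countable family of all such open $W$'s arising from finite subfamilies of $\mathcal{N}$].''
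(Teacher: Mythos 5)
Your top-level strategy is exactly the paper's: quote the classical fact that a $T_2$ space with a countable network is an $\epsilon$-space and then apply Theorem~\ref{epsilon+weakiota}. The paper in fact offers nothing beyond that one sentence, leaving the $\iota_w$ half of the hypothesis of Theorem~\ref{epsilon+weakiota} implicit, and you are right both that this half is where the content lies and that the obstacle is that network elements need not be open. But none of your proposed repairs closes the gap. Taking interiors fails because a network element can have empty interior; ``take each $W_x$ from a fixed countable open base-like family'' begs the question, since a space with a countable network need not have a countable base; and your final suggestion --- attach one open set $W_{\mathcal F}$ to each finite subfamily $\mathcal F\subseteq\mathcal N$ --- cannot work as stated, because the open set you need depends on $G$ as well as on $F$. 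For a fixed $\mathcal F$ there is in general no single open superset of $\bigcup\mathcal F$ disjoint from every finite set that is disjoint from $\bigcup\mathcal F$ (that would force $\bigcup\mathcal F$ to be open), so indexing the chosen open sets only by the family capturing $F$ discards exactly the information needed to miss $G$.

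The repair is to capture both finite sets by the network and index by \emph{pairs}. Close $\mathcal N$ under finite unions. Since $X$ is $T_2$, disjoint finite sets $F,G$ can be separated by \emph{disjoint} open sets $W\supseteq F$ and $Z\supseteq G$: with your notation, $W=\bigcup_{x\in F}\bigcap_{y\in G}U_{x,y}$ and $Z=\bigcup_{y\in G}\bigcap_{x\in F}V_{x,y}$ are disjoint because $W_x\cap Z_y\subseteq U_{x,y}\cap V_{x,y}=\emptyset$. Choose $N\in\mathcal N$ with $F\subseteq N\subseteq W$ and $M\in\mathcal N$ with $G\subseteq M\subseteq Z$. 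Now let $\mathcal P$ be the set of pairs $(N,M)\in\mathcal N\times\mathcal N$ for which some open $U$ satisfies $N\subseteq U$ and $U\cap M=\emptyset$, and fix one such witness $U_{N,M}$ for each pair in $\mathcal P$. The family $\{U_{N,M}:(N,M)\in\mathcal P\}$ is countable and open; for $F,G$ as above the pair $(N,M)$ lies in $\mathcal P$ (witnessed by $W$, since $M\subseteq Z$ and $W\cap Z=\emptyset$), and then $F\subseteq N\subseteq U_{N,M}$ while $U_{N,M}\cap G\subseteq U_{N,M}\cap M=\emptyset$. Hence $X$ is an $\iota_w$-space, and the theorem follows from Theorem~\ref{epsilon+weakiota} as you intended.
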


The converse is not true. We are going to present three counterexamples. The first one has the advantage of being simpler, the second one has the advantage of being regular, and the third one is only consistent, but we present it anyway, because the techniques used in verifying its properties might have independent interest.

\begin{example}
There is a $T_2$ $\iota$-space without a countable network.
\end{example}

\begin{proof}
Let $\mathbb{R}_c$ be the real line with the topology generated by sets of the form $U \setminus C$, where $U$ is a Euclidean open set and $C$ is a countable set of reals.

Suppose by contradiction that $\{N_n: n < \omega \}$ is a countable network for $\mathbb{R}_c$. Without loss of generality we can assume that $N_n$ is infinite for every $n$ and use this to inductively pick $x_n \in N_n \setminus \{x_i: i < n \}$. then $\mathbb{R}_c \setminus \{x_i: i < \omega \}$ is an open set not containing any element of $\{N_n: n < \omega \}$. It follows that $\mathbb{R}_c$ does not have a countable network.

Now $\mathbb{R}_c$ is a refinement of the Euclidean topology on $\mathbb{R}$ and hence it is both an $\epsilon $-space and an $\iota _w$-space. Therefore, by Theorem \ref{epsilon+weakiota}, $\mathbb{R}_c$ is an $\iota$-space.

\end{proof}

\begin{example}
There is a regular $\iota$-space without a countable network within the usual axioms of ZFC.
\end{example}

\begin{proof}

Let $X \subset \mathbb{R}$ be a subset of the reals. By Michael-type space $L(X)$ we mean the refinement of the usual topology on $\mathbb{R}$ obtained by isolating every point of $\mathbb{R} \setminus X$. By Theorem $\ref{epsilon+weakiota}$ every Michael-type space which is an $\epsilon$-space is also an $\iota$-space. It's easy to see, that if $X$ is a Bernstein set (that is, a set which hits every uncountable closed set of the real line along with its complement), then $L(X)$ is Lindel\"of, and Lawrence \cite{L} proved that there is in ZFC a Bernstein set $X \subset \mathbb{R}$ such that $L(X)$ is an $\epsilon$-space.  The techniques used to construct the Bernstein set originated in \cite{P} and Burke gives the details of the construction in \cite{B}.

\end{proof}

The next construction preceded Theorem \ref{epsilon+weakiota}, but we include it because it may be of independent interest.  It gives a recursive construction of an $\iota $-space.

\begin{example}[CH]\label{Michael}
There is a Michael space, $M_X$, that is an $\iota $-space.
\end{example}

\begin{proof}
For convenience, call ${\mathcal U}$ an open finite union (ofu)-$\iota $-cover of ${\mathbb Q}$ if 
\begin{enumerate}
\item $\forall U\in {\mathcal U}$, $U=\bigcup _{i<n}I_i$ where $n\in \omega $, $I_i=(p_i,q_i)$, $p_i,q_i\in {\mathbb Q}$.
\item ${\mathbb Q}\subseteq \bigcup {\mathcal U}$
\item $\forall F,G\in [{\mathbb Q}]^{<\omega }$ such that $F\cap G=\emptyset $, $\exists U=\bigcup _{i<n}I_i\in {\mathcal U}$ such that $F\subseteq U$, $\bar{I}_i\cap G=\emptyset $, $\forall i<n$.
\end{enumerate}
Let $\{{\mathcal U}_\alpha :\alpha <\omega _1\}$ enumerate all (ofu)-$\iota $-covers of ${\mathbb Q}$.  Define by recursion $X=\{x_\alpha :\alpha <\omega _1\}$ so that $\forall \alpha <\omega _1$, IH($\alpha )$ holds, where
\\
\\
IH($\alpha $):  $\forall \beta <\alpha $, ${\mathcal U}_\beta $ is an $\iota $-cover of ${\mathbb Q}\cup \{x_\xi :\beta <\xi <\alpha \}$. 
\\
\\ 
Let $x_0\in {\mathbb R}\setminus {\mathbb Q}$.
\\
Fix $\alpha <\omega _1$ and suppose $\{x_\xi :\xi <\alpha \}$ have been defined.  
\\
We must choose $x_\alpha $ so that IH($\alpha +1$) is satisfied.  That is, ${\mathcal U}_\beta $ is an $\iota $-cover of ${\mathbb Q}\cup \{x_\xi :\beta <\xi \leq \alpha \}$, $\forall \beta \leq \alpha $.

\begin{notation}
For ${\mathcal U}\in \{{\mathcal U}_\beta :\beta \leq \alpha \}$, let ${\mathbb Q}_{{\mathcal U}}={\mathbb Q}\cup \{x_\xi :\beta <\xi <\alpha \}$ where ${\mathcal U}={\mathcal U}_\beta $.
\end{notation}
Let ${\mathcal T}=\{({\mathcal U},F,G)\in \{{\mathcal U}_\beta :\beta \leq \alpha \}\times [{\mathbb Q}\cup \{x_\xi :\xi <\alpha \}]^{<\omega }\times [{\mathbb Q}\cup \{x_\xi :\xi <\alpha \}]^{<\omega }: F,G\in [{\mathbb Q}_{\mathcal U}]^{<\omega }, F\cap G=\emptyset \}$.  Enumerate ${\mathcal T}=\{({\mathcal U}_{\alpha n},F_n,G_n):n\in \omega \}$.  For $n\in \omega $, let $\beta _n \leq \alpha $ such that ${\mathcal U}_{\alpha n}={\mathcal U}_{\beta _n}$.  Then, $F_n, G_n \in [{\mathbb Q}\cup \{x_\xi :\beta _n<\xi <\alpha \}]^{<\omega }$ and by IH($\alpha )$, ${\mathcal U}_{\alpha n}$ is an $\iota $-cover of ${\mathbb Q}\cup \{x_\xi :\beta _n<\xi <\alpha \}$.
\\
\\
Build sequences $\{q_n:n\in \omega \}\subseteq {\mathbb Q}$, $\{U_n^i:n\in \omega, i<2\}$, $\{I_n:n\in \omega \}$ and $\{V_n:n\in \omega \}$ such that 
\begin{enumerate}
\item $U_n^i\in {\mathcal U}_{\alpha n} \forall i<2$, $n\in \omega $.
\item $I_n$, $V_n$ open intervals.
\item $q_0\in {\mathbb Q}\setminus (F_0\dot{\cup }G_0)$ and $q_n\in V_{n-1}\cap {\mathbb Q}\setminus (F_n\dot{\cup }G_n)$, $\forall n\geq 1$.
\item $F_n\cup \{q_n\}\subseteq U_n^0$, $U_n^0\cap G_n=\emptyset $ and $F_n\subseteq U_n^1$, $U_n^1\cap (G_n\cup \{q_n\})=\emptyset $.
\item $q_0\in I_0\subseteq U_n^0$ and $q_n\in I_n\subseteq V_{n-1}\cap U_n^0$, $\forall n\geq 1$.
\item $\overline{V}_n\subseteq I_n\setminus (U_n^1\cup \{q_n\})$ such that diam$(V_n)<\frac{1}{n}$ ($\forall n\geq 1)$
\end{enumerate}
Let $q_0\in {\mathbb Q}\setminus (F_0\dot{\cup }G_0)$.  Then $F_0\cup \{q_0\}, G_0\in [{\mathbb Q}\cup \{x_\xi :\beta _0<\xi <\alpha \}]^{<\omega }$ so let $U_0^0=\bigcup _{i<k_0}I_i \in {\mathcal U}_{\alpha 0}$ such that $F_0\cup \{q_0\}\subseteq U_0^0$, $G_0\cap U_0^0=\emptyset $.  Let $I_0\in \{I_i:i<k_0\}$ such that $q_0\in I_0$.  Also, $F_0,G_0\cup \{q_0\}\in [{\mathbb Q}\cup \{x_\xi :\beta _0<\xi <\alpha \}]^{<\omega }$ so let $U_0^1=\bigcup _{i<m_0}I_i$ such that $F_0\subseteq U_0^1$, $(G_0\cup \{q_0\})\cap U_0^1=\emptyset $.  Let $V_0$ be an open interval such that $\overline{V}_0\subseteq I_0\setminus (U_0^1\cup \{q_0\})$.
\\
Fix $n\in \omega $ and suppose $\{q_m:m<n\}$, $\{U_m^i:i<2,m<n\}$, $\{I_m:m<n\}$ and $\{V_m:m<n\}$ have been defined.
\\
Let $q_n\in V_{n-1}\cap {\mathbb Q}\setminus (F_n\dot{\cup }G_n)$.  Then $F_n\cup \{q_n\}, G_n\in [{\mathbb Q}\cup \{x_\xi :\beta _n<\xi <\alpha \}]^{<\omega }$ so let $U_n^0=\bigcup _{i<k_n}I_i \in {\mathcal U}_{\alpha n}$ such that $F_n\cup \{q_n\}\subseteq U_n^0$, $G_n\cap U_n^0=\emptyset $.  Let $I_n\subseteq V_{n-1}\cap U_n^0$ be an open interval such that $q_n\in I_n$.  Also, $F_n,G_n\cup \{q_n\}\in [{\mathbb Q}\cup \{x_\xi :\beta _n<\xi <\alpha \}]^{<\omega }$ so let $U_n^1=\bigcup _{i<m_n}I_i$ such that $F_n\subseteq U_n^1$, $(G_n\cup \{q_n\})\cap U_n^1=\emptyset $.  Let $V_n$ be an open interval of diameter $<\frac{1}{n}$ such that $\overline{V}_n\subseteq I_n\setminus (U_n^1\cup \{q_n\})$.
\\
Let $x_\alpha \in {\mathbb R}\setminus {\mathbb Q}$ such that $\bigcap _{n\in \omega }\overline{V}_n=\{x_\alpha \}$.
\\
To see IH$(\alpha +1)$ is satisfied, let $\beta \leq \alpha $ and notice ${\mathcal U}_\beta $ is an $\iota $-cover of ${\mathbb Q}\cup \{x_\xi :\beta <\xi \leq \alpha \}$:  Let $F,G\in [{\mathbb Q}\cup \{x_\xi :\beta <\xi \leq \alpha \}]^{<\omega }$ such that $F\cap G=\emptyset $.  If $x_\alpha \in F$, let $F^\shortmid =F\setminus \{x_\alpha \}$ and $m\in \omega $ such that $({\mathcal U}_\beta , F^\shortmid , G)=({\mathcal U}_{\alpha m},F_m,G_m)$.  Then, $U_m^0\in {\mathcal U}_{\alpha m} $ such that $F_m\subseteq U_m^0, G_m\cap U_m^0=\emptyset $ and $x_\alpha \in V_m\subseteq U_m^0$.  Therefore, $U_m^0\in {\mathcal U}_{\beta} $ such that $F\subseteq U_m^0$ and $G\cap U_m^0=\emptyset $.  If $x_\alpha \in G$, let $G^\shortmid =G\setminus \{x_\alpha \}$ and $k\in \omega $ such that $({\mathcal U}_\beta , F, G^\shortmid )=({\mathcal U}_{\alpha k},F_k,G_k)$.  Then, $U_k^1\in {\mathcal U}_{\alpha k} $ such that $F_k\subseteq U_k^1, G_k\cap U_k^1=\emptyset $ and $x_\alpha \in V_k\subseteq I_k\setminus (U_k^1\cup \{q_k\})$.  Therefore, $U_k^1\in {\mathcal U}_{\beta} $ such that $F\subseteq U_k^1$ and $G\cap U_k^1=\emptyset $. 

Therefore, by construction, ${\mathcal U}_\beta $ is an $\iota $-cover of ${\mathbb Q}\cup \{x_\xi :\xi >\beta \}$, $\forall \beta <\omega _1$.  So, ${\mathcal U}_\beta $ is an $\iota $-cover of a tail of ${\mathbb Q}\cup X$.

Let $M_X={\mathbb Q}\cup X$ with the Michael topology (usual basic open neighbourhoods for ${\mathbb Q}$ and isolate points of $X$).  Let ${\mathcal U}$ be an open $\iota $-cover of $M_X$.
\begin{notation}
For $F,G\in [X]^{<\omega }$ such that $F\cap G=\emptyset $, let ${\mathcal U}_{FG}=\{U\in {\mathcal U}:F\subseteq U, U\cap G=\emptyset \}$.
\end{notation}
\begin{claim}
For any $F,G \in [X]^{<\omega }$ such that $F\cap G =\emptyset $, $\exists \alpha _{FG}<\omega _1$ such that ${\mathcal U}_{\alpha _{FG}}\prec {\mathcal U}_{FG}$.
\end{claim}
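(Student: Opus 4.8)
The plan is to construct \emph{some} (ofu)-$\iota$-cover ${\mathcal W}$ of ${\mathbb Q}$ that refines ${\mathcal U}_{FG}$ and then appeal to the fact that $\{{\mathcal U}_\alpha : \alpha < \omega_1\}$ enumerates \emph{all} (ofu)-$\iota$-covers of ${\mathbb Q}$, so that ${\mathcal W} = {\mathcal U}_{\alpha_{FG}}$ for a suitable $\alpha_{FG} < \omega_1$. Here ``refines'' should be read with the ambient space in mind: each $W \in {\mathcal W}$ (a finite union of rational open intervals) should satisfy $W \cap M_X \subseteq U$ for some $U \in {\mathcal U}_{FG}$.

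First I would check that ${\mathcal U}_{FG}$ acts like an open $\iota$-cover of ${\mathbb Q}$: given disjoint $P, H \in [{\mathbb Q}]^{<\omega}$, the sets $F \cup P$ and $G \cup H$ are disjoint finite subsets of $M_X$ — automatically, since $F, G \subseteq X$, $P, H \subseteq {\mathbb Q}$, and $F \cap G = P \cap H = \emptyset$ — so the $\iota$-cover property of ${\mathcal U}$ on $M_X$ yields $U \in {\mathcal U}$ with $F \cup P \subseteq U$ and $U \cap (G \cup H) = \emptyset$; this $U$ belongs to ${\mathcal U}_{FG}$, contains $P$, and misses $H$. In particular ${\mathcal U}_{FG}$ covers ${\mathbb Q}$.

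Next, for every pair of disjoint finite $P, H \subseteq {\mathbb Q}$ with $P \neq \emptyset$ I would fix some $U_{P,H} \in {\mathcal U}_{FG}$ with $P \subseteq U_{P,H}$ and $U_{P,H} \cap H = \emptyset$, and then, for each $p \in P$, a rational open interval $I_p^{P,H} \ni p$ with $\overline{I_p^{P,H}} \cap H = \emptyset$ and $I_p^{P,H} \cap M_X \subseteq U_{P,H}$: one first picks a basic neighbourhood $(a,b) \cap M_X \subseteq U_{P,H}$ of $p$ with $a, b \in {\mathbb Q}$, and then shrinks $a, b$, keeping $p$ strictly between them, so that $[a,b] \cap H = \emptyset$ (possible since $H$ is finite and $p \notin H$). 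Setting $W_{P,H} = \bigcup_{p \in P} I_p^{P,H}$ and ${\mathcal W} = \{W_{P,H} : P, H \in [{\mathbb Q}]^{<\omega},\ P \neq \emptyset,\ P \cap H = \emptyset\}$, one then verifies: ${\mathcal W}$ is countable, each member is a finite union of rational intervals, ${\mathcal W}$ covers ${\mathbb Q}$ because $q \in W_{\{q\}, \emptyset}$ for every $q$, and for disjoint finite $P, H \subseteq {\mathbb Q}$ the member $W_{P,H}$ contains $P$ while the closures of its constituent intervals all miss $H$ — so ${\mathcal W}$ is an (ofu)-$\iota$-cover of ${\mathbb Q}$. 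Finally $W_{P,H} \cap M_X = \bigcup_{p \in P}(I_p^{P,H} \cap M_X) \subseteq U_{P,H} \in {\mathcal U}_{FG}$, so ${\mathcal W}$ refines ${\mathcal U}_{FG}$, and ${\mathcal W} = {\mathcal U}_{\alpha_{FG}}$ for some $\alpha_{FG} < \omega_1$.

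The one place where care is needed — what I expect to be the crux — is the choice of the intervals $I_p^{P,H}$: the refinement has to be genuine over $M_X$, not merely over ${\mathbb Q}$, since downstream the covers ${\mathcal U}_\alpha$ must also catch the irrational points $x_\xi$ of $M_X$. Pulling $I_p^{P,H}$ out of a basic $M_X$-neighbourhood of $p$ contained in $U_{P,H}$ arranges this automatically (it forces $I_p^{P,H} \cap X \subseteq U_{P,H}$), and it is compatible with the (ofu) closure condition $\overline{I_p^{P,H}} \cap H = \emptyset$ because $H$ is finite and avoids $p$. The remaining ingredients — the transfer of the $\iota$-property from $M_X$ to ${\mathbb Q}$, and the fact that ${\mathcal W}$, being an (ofu)-$\iota$-cover of ${\mathbb Q}$, occurs in the enumeration $\{{\mathcal U}_\alpha : \alpha < \omega_1\}$ (which exhausts all of them, there being only $\mathfrak{c} = \aleph_1$ many under CH) — are routine.
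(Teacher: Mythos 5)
Your proposal is correct and follows essentially the same route as the paper: both arguments observe that ${\mathcal U}_{FG}$ behaves as an $\iota$-cover of ${\mathbb Q}$ (by separating $F\cup P$ from $G\cup H$ in $M_X$), shrink a separating member to a finite union of rational intervals around the points of $P$ whose closures miss $H$, and then locate the resulting (ofu)-$\iota$-cover in the enumeration $\{{\mathcal U}_\alpha:\alpha<\omega_1\}$. Your explicit attention to reading the refinement as $W\cap M_X\subseteq U$ is a point the paper leaves implicit, but it is the same construction.
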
 
Note that, ${\mathcal U}_{FG}$ is an $\iota $-cover of ${\mathbb Q}\cup X\setminus (F\cup G)$.  So, for each $F^\shortmid , G^\shortmid \in [{\mathbb Q}]^{<\omega }$ such that $F^\shortmid \cap G^\shortmid =\emptyset $, let $U(F^\shortmid ,G^\shortmid )\in {\mathcal U}_{FG}$ such that $F^\shortmid \subseteq U(F^\shortmid ,G^\shortmid )$ and $U(F^\shortmid ,G^\shortmid )\cap G^\shortmid =\emptyset $.  For $x\in F^\shortmid $, let $p_{x(F^\shortmid G^\shortmid )}, q_{x(F^\shortmid G^\shortmid )} \in {\mathbb Q}$ such that $x\in I_{x(F^\shortmid G^\shortmid )}=(p_{x(F^\shortmid G^\shortmid )}, q_{x(F^\shortmid G^\shortmid )})\subseteq U(F^\shortmid ,G^\shortmid )$ but $\bar{I}_{x(F^\shortmid G^\shortmid )}\cap G^\shortmid =\emptyset $.  Let $V_{F^\shortmid G^\shortmid }=\bigcup _{x\in F^\shortmid }I_{x(F^\shortmid G^\shortmid )}$.  Then ${\mathcal V}=\{V_{F^\shortmid G^\shortmid }:F^\shortmid , G^\shortmid \in [{\mathbb Q}]^{<\omega }, F^\shortmid \cap G^\shortmid =\emptyset \}$ is an (ofu)-$\iota $-cover of ${\mathbb Q}$ that refines ${\mathcal U}_{FG}$.  So, let $\alpha _{FG}<\omega _1$ such that ${\mathcal V}={\mathcal U}_{\alpha _{FG}}$.
\\
\\
By a closing off argument, let $\bar{\alpha }<\omega _1$ such that ${\mathcal U}_{\bar{\alpha }}\prec {\mathcal U}_{FG}$, $\forall F,G\in [\{x_\xi :\xi \leq \bar{\alpha }\}]^{<\omega }$ such that $F\cap G=\emptyset $.
\begin{claim}
${\mathcal U}^\shortmid =\{U\cup F:U\in {\mathcal U}_{\bar{\alpha }}, F\in [\{x_\xi :\xi \leq \bar{\alpha }\}]^{<\omega }\}$ is a countable open refinement of ${\mathcal U}$ that is an $\iota $-cover.
\end{claim}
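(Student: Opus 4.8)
The plan is to check, one at a time, that $\mathcal{U}'$ is countable, consists of open sets, refines $\mathcal{U}$, and is an $\iota$-cover of $M_X$. Countability and openness are routine: $\mathcal{U}_{\bar\alpha}$ is an (ofu)-$\iota$-cover, so each of its members is a finite union of rational intervals and there are only countably many such sets, while $\{x_\xi : \xi \le \bar\alpha\}$ is a countable subset of $X$ and hence has only countably many finite subsets; and every member of $\mathcal{U}_{\bar\alpha}$ is (the trace on $M_X$ of) a Euclidean open set, so it is open in $M_X$, and since each point of $X$ is isolated, adjoining a finite $F \subseteq X$ keeps the set open.

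For the refinement, I would take a typical $U \cup F \in \mathcal{U}'$ with $U \in \mathcal{U}_{\bar\alpha}$ and $F \in [\{x_\xi : \xi \le \bar\alpha\}]^{<\omega}$, and apply the closing-off property of $\bar\alpha$ to the disjoint pair $(F, \emptyset)$, obtaining $\mathcal{U}_{\bar\alpha} \prec \mathcal{U}_{F\emptyset}$. This yields $W \in \mathcal{U}_{F\emptyset} \subseteq \mathcal{U}$ with $U \subseteq W$; since $F \subseteq W$ by the definition of $\mathcal{U}_{F\emptyset}$, we get $U \cup F \subseteq W$.

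The $\iota$-cover property is the substantive part. Given disjoint finite $F, G \subseteq M_X$, split each according to whether $\bar\alpha$ has already ``seen'' it: set $F_1 = F \cap \{x_\xi : \xi \le \bar\alpha\}$, $G_1 = G \cap \{x_\xi : \xi \le \bar\alpha\}$, $F_2 = F \setminus F_1$, $G_2 = G \setminus G_1$. Then $F_2$ and $G_2$ are disjoint finite subsets of the tail $\mathbb{Q} \cup \{x_\xi : \xi > \bar\alpha\}$, on which the recursion guarantees $\mathcal{U}_{\bar\alpha}$ is an $\iota$-cover, so I can choose $U \in \mathcal{U}_{\bar\alpha}$ with $F_2 \subseteq U$ and $U \cap G_2 = \emptyset$. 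To see that $U$ also misses $G_1$, apply the closing-off property to the disjoint pair $(F_1, G_1) \in [\{x_\xi : \xi \le \bar\alpha\}]^{<\omega}$: from $\mathcal{U}_{\bar\alpha} \prec \mathcal{U}_{F_1 G_1}$, the set $U$ lies inside some member of $\mathcal{U}_{F_1 G_1}$, and every such member misses $G_1$, so $U \cap G_1 = \emptyset$. Hence $U \cap G = \emptyset$, and since $F_1 \cap G = \emptyset$ the set $U \cup F_1 \in \mathcal{U}'$ contains $F = F_1 \cup F_2$ and misses $G$.

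The only step requiring an idea is the treatment of $G_1$: the recursion certifies that $\mathcal{U}_{\bar\alpha}$ separates finite subsets of $\mathbb{Q}$ together with the tail $\{x_\xi : \xi > \bar\alpha\}$, so by itself it says nothing about avoiding the earlier points of $X$ — which is exactly what the closing-off step was set up to repair, since $\mathcal{U}_{\bar\alpha} \prec \mathcal{U}_{F_1 G_1}$ forces every member of $\mathcal{U}_{\bar\alpha}$ to be disjoint from $G_1$. The mirror-image worry about points of $F_1$ (early points of $X$ that must be covered) never arises, because $\mathcal{U}'$ was defined precisely so that one may adjoin any finite subset of $\{x_\xi : \xi \le \bar\alpha\}$.
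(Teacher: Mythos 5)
Your proof is correct and follows essentially the same route as the paper's: the same decomposition of $F$ and $G$ along $\{x_\xi:\xi\leq\bar{\alpha}\}$, the same appeal to $\mathcal{U}_{\bar{\alpha}}$ being an $\iota$-cover of the tail, and the same use of the closing-off property for the refinement part. In fact you are slightly more careful than the paper, which asserts $(U\cup F_1^\shortmid)\cap G^\shortmid=\emptyset$ without explicitly checking that $U$ misses $G_1^\shortmid$; your extra step invoking $\mathcal{U}_{\bar{\alpha}}\prec\mathcal{U}_{F_1G_1}$ supplies exactly the justification the paper leaves implicit.
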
  
To see ${\mathcal U}^\shortmid \prec {\mathcal U}$, let $U\cup F\in {\mathcal U}^\shortmid $.  Then $U\in {\mathcal U}_{\bar{\alpha }}$, $F\in [\{x_\xi :\xi \leq \bar{\alpha }\}]^{<\omega }$ so let $G\in [\{x_\xi :\xi \leq \bar{\alpha }\}\setminus F]^{<\omega }$ and since ${\mathcal U}_{\bar{\alpha }}\prec {\mathcal U}_{FG}$, let $U^\shortmid \in {\mathcal U}_{FG}$ such that $U\subseteq U^\shortmid $.  Then, $U^\shortmid \in {\mathcal U}$ such that $U\cup F\subseteq U^\shortmid $.

To see ${\mathcal U}^\shortmid $ is an $\iota $-cover, let $F^\shortmid ,G^\shortmid \in [M_X]^{<\omega }$ such that $F^\shortmid \cap G^\shortmid =\emptyset $.  Let $F_1^\shortmid =F^\shortmid \cap \{x_\xi :\xi \leq \bar{\alpha }\}$, $F_2^\shortmid =F^\shortmid \cap ({\mathbb Q}\cup \{x_\xi :\xi >\bar{\alpha }\})$, $G_1^\shortmid =G^\shortmid \cap \{x_\xi :\xi \leq \bar{\alpha }\}$, $G_2^\shortmid =G^\shortmid \cap ({\mathbb Q}\cup \{x_\xi :\xi >\bar{\alpha }\})$.  Then, $F_2^\shortmid ,G_2^\shortmid \in [{\mathbb Q}\cup \{x_\xi :\xi >\bar{\alpha }\}]^{<\omega }$ such that $F_2^\shortmid \cap G_2^\shortmid =\emptyset $.  So let $U\in {\mathcal U}_{\bar{\alpha }}$ such that $F_2^\shortmid \subseteq U$ and $U\cap G_2^\shortmid =\emptyset $.  Then, $U\cup F_1^\shortmid \in {\mathcal U}^\shortmid $ such that $F^\shortmid \subseteq U\cup F_1^\shortmid $ and $(U\cup F_1^\shortmid )\cap G^\shortmid =\emptyset $.
\end{proof}

\begin{remark}
We constructed $M_X$ so that any open $\iota $-cover of $M_X$ has a countable open refinement that $\iota $-covers a tail of $M_X$, which is enough to show that $M_X$ is an $\iota $-space since the countable open refinement ${\mathcal U}^\shortmid $ of ${\mathcal U}$ that $\iota $-covers $M_X$ is defined from an $\iota $-cover of a tail or an almost $\iota $-cover.  This leads us to our next definition and some useful facts.
\end{remark}

\begin{definition}\label{almostiota}
A space $X$ is {\it almost}-$\iota $ if for every open $\iota $-cover ${\mathcal U}$ of $X$, there is a countable open refinement ${\mathcal V}$ of ${\mathcal U}$ and $A\in [X]^\omega $ such that ${\mathcal V}$ is an $\iota $-cover of $X\setminus A$.
\end{definition}
\begin{note}
Almost-$\iota $ is closed hereditary.
\end{note}
\begin{lemma}\label{almostiotalem}
If $X$ is almost-$\iota $ and has points regular $G_\delta $ then $X$ is an $\iota $-space.
\end{lemma}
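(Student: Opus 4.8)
The plan is to invoke Theorem~\ref{epsilon+weakiota}: an $\iota$-space is exactly a space which is simultaneously an $\epsilon$-space and an $\iota_w$-space, so it is enough to show that an almost-$\iota$ space $X$ all of whose points are regular $G_\delta$ is both. Throughout I will use that such an $X$ is $T_1$ (the intersection of the closures of the witnessing neighbourhoods of a point is a singleton), so that deleting a finite set from an open set leaves an open set; and for each $x\in X$ I fix a decreasing sequence of open sets $(W^x_k)_{k<\omega}$ with $\bigcap_k W^x_k=\bigcap_k\overline{W^x_k}=\{x\}$.

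To see $X$ is an $\iota_w$-space, apply the almost-$\iota$ property to the open $\iota$-cover $\{X\setminus F:F\in[X]^{<\omega}\}$, obtaining a countable open family $\mathcal V$ and a countable set $A$ such that $\mathcal V$ is an $\iota$-cover of $X\setminus A$. Now only the countably many points of $A$ still need to be taken care of, and this can be done with small neighbourhoods. Concretely, I would verify that
\[
\mathcal C:=\Big\{\,(V\setminus D)\cup\bigcup_{a\in S}W^a_k \;:\; V\in\mathcal V,\ D,S\in[A]^{<\omega},\ k<\omega\,\Big\}
\]
is a countable open $\iota$-cover of $X$. Given disjoint finite $F,G\subseteq X$, write $F=F_0\cup F_1$ and $G=G_0\cup G_1$ with $F_0,G_0\subseteq A$ and $F_1,G_1\subseteq X\setminus A$; pick $V\in\mathcal V$ with $F_1\subseteq V$ and $V\cap G_1=\emptyset$, and pick $k$ so large that $W^a_k\cap G=\emptyset$ for every $a\in F_0$ (possible since $F_0$ is finite and each such $a$ lies outside the finite set $G$). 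Then $(V\setminus G_0)\cup\bigcup_{a\in F_0}W^a_k$ belongs to $\mathcal C$, contains $F$, and misses $G$.

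To see $X$ is an $\epsilon$-space, let $\mathcal O$ be an open $\omega$-cover of $X$; we want a countable $\omega$-subcover. The basic move, applied to an open $\omega$-cover $\mathcal P$ of $X$, is this: $\{O\setminus K:O\in\mathcal P,\ K\in[X]^{<\omega}\}$ is an open $\iota$-cover of $X$ (here $T_1$ is used), so by almost-$\iota$ it has a countable open refinement $\mathcal V$ which $\iota$-covers $X\setminus B$ for some countable $B$; choosing $O_V\in\mathcal P$ with $V\subseteq O_V$ for each $V\in\mathcal V$, the set $\{O_V:V\in\mathcal V\}$ is a countable subfamily of $\mathcal P$ that $\omega$-covers $X\setminus B$. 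Now iterate. Put $A^{(0)}=\emptyset$; given a countable $A^{(n)}$, for each $S\in[A^{(n)}]^{<\omega}$ apply the move to $\mathcal P=\{O\in\mathcal O:S\subseteq O\}$ (an open $\omega$-cover of $X$ because $\mathcal O$ is), getting a countable $\mathcal O_S\subseteq\mathcal O$ whose members all contain $S$ and which $\omega$-covers $X\setminus B_S$; set $\mathcal O^{(n+1)}=\bigcup_S\mathcal O_S$ and $A^{(n+1)}=A^{(n)}\cup\bigcup_S B_S$, both countable. Then $\mathcal O^{\ast}:=\bigcup_n\mathcal O^{(n)}$ is the desired countable $\omega$-subcover: for finite $F\subseteq X$ the sets $F\cap A^{(n)}$ increase inside the finite set $F\cap\bigcup_nA^{(n)}$, hence stabilize at some stage $n$ to $S:=F\cap\bigcup_nA^{(n)}$; then $F\setminus S$ is disjoint from $\bigcup_mA^{(m)}$, hence from $B_S\subseteq A^{(n+1)}$, so some $O\in\mathcal O_S$ contains $S$ and $F\setminus S$, i.e. $F\subseteq O\in\mathcal O^{\ast}$.

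The main obstacle in both halves is precisely the countable leftover set that the almost-$\iota$ property refuses to eliminate: one would like to pass to the co-countable subspace and induct, but it is unclear whether almost-$\iota$ is preserved by removing points, so instead one absorbs the leftover set—via regular-$G_\delta$ neighbourhoods for the $\iota_w$ part, and via the $\omega$-stage iteration above for the $\epsilon$ part, the crux being that finiteness of $F$ forces $F\cap A^{(n)}$ to stabilize after finitely many stages. The side facts flagged along the way (that $\mathcal C$ and each $\{O\setminus K\}$ really are $\iota$-covers, that $F$ is caught at a finite stage, and that all the families and bad sets stay countable) are routine once the constructions are set up.
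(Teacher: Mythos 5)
Your proposal is correct, but it takes a genuinely different route from the paper's. The paper first establishes the auxiliary lemma that $X\setminus F$ remains almost-$\iota$ for every finite $F$ (this is where closed-heredity of almost-$\iota$ and the regular $G_\delta$ points enter), and then runs a countable elementary submodel $\mathcal{M}$ containing $\mathcal{U}$ and $(X,\tau)$: the countable $\iota$-refinement is $\{V\in \mathcal{M}\cap\tau : V\subseteq U \text{ for some } U\in\mathcal{U}\}$, and a disjoint pair $F',G'$ is handled by splitting it into its trace on $\mathcal{M}$ and the remainder, reflecting the almost-$\iota$ property of $X\setminus((F'\cup G')\cap\mathcal{M})$ into $\mathcal{M}$ to capture the part outside $\mathcal{M}$, and re-attaching the part inside $\mathcal{M}$ using a regular $G_\delta$ neighbourhood. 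You instead reduce to Theorem~\ref{epsilon+weakiota} and verify the two halves separately, replacing the submodel by a concrete length-$\omega$ closing-off iteration for the $\epsilon$ half and by direct absorption of the countable leftover set via the $G_\delta$ neighbourhoods for the $\iota_w$ half. Your version is more elementary (no elementary submodels) and isolates the hypotheses more sharply: your ``basic move'' shows that $T_1$ plus almost-$\iota$ already yields almost-$\epsilon$ and hence the $\epsilon$ property (compare Lemma~\ref{almostepsilonlem}), while the regular $G_\delta$ points are needed only for the $\iota_w$ half; the paper's argument produces the $\iota$-refinement in one shot and serves as the template reused later in the paper. The only loose end in your write-up is the degenerate case $X=A$, where $\mathcal{V}$ could be empty; there one should simply note that a countable $T_1$ space has the cofinite sets as a countable open $\iota$-cover.
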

Before proving Lemma \ref{almostiotalem}, we need the following:
\begin{lemma}
If $X$ is almost-$\iota $ and has points regular $G_\delta $ then $X\setminus F$ is almost-$\iota, $ $\forall F\in [X]^{<\omega }$.
\end{lemma}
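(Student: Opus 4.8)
The plan is to avoid extending $\mathcal{U}$ to all of $X$ and instead exhibit $X\setminus F$ as an increasing union of closed subspaces of $X$, each of which is almost-$\iota$ because almost-$\iota$ is closed hereditary (the preceding Note), and then to paste together the witnessing refinements.

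Write $F=\{p_1,\dots,p_k\}$. Since each $p_j$ is a regular $G_\delta$ point, $\{p_j\}$ is closed and I can fix a decreasing sequence $\{W^j_n:n\in\omega\}$ of open sets with $p_j\in W^j_{n+1}\subseteq\overline{W^j_{n+1}}\subseteq W^j_n$ and $\bigcap_n W^j_n=\{p_j\}$. Put $D_n=X\setminus\bigcup_{j\le k}\overline{W^j_n}$ and $E_n=\overline{D_n}$, closures taken in $X$. The routine facts I would check are: the $D_n$ are open and increase with $n$; $\bigcup_n D_n=X\setminus F$ (the nontrivial inclusion uses $\bigcap_n\overline{W^j_n}=\{p_j\}$ together with a pigeonhole argument over $j$); each $E_n$ is closed in $X$; and $D_n\subseteq E_n\subseteq D_{n+1}$, the last containment being exactly where the nesting $\overline{W^j_{n+1}}\subseteq W^j_n$ enters, since $\overline{D_n}=X\setminus\operatorname{int}\big(\bigcup_j\overline{W^j_n}\big)\subseteq X\setminus\bigcup_j\overline{W^j_{n+1}}=D_{n+1}$. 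In particular $E_n\subseteq X\setminus F$, and each $E_n$ is a closed subspace of $X$, hence almost-$\iota$.

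Now let $\mathcal{U}$ be an arbitrary open $\iota$-cover of $Y:=X\setminus F$. For each $n$ the trace $\mathcal{U}_n:=\{U\cap E_n:U\in\mathcal{U}\}$ is an open $\iota$-cover of the space $E_n$: any $U\in\mathcal{U}$ that separates a pair of disjoint finite subsets of $E_n\subseteq Y$ traces to a member of $\mathcal{U}_n$ doing the same. Applying the almost-$\iota$ property of $E_n$, I fix a countable family $\mathcal{V}_n$, open in $E_n$, refining $\mathcal{U}_n$, together with a countable $A_n\subseteq E_n$ such that $\mathcal{V}_n$ is an $\iota$-cover of $E_n\setminus A_n$. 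I then put
\[
\mathcal{V}=\{\,V\cap D_n:n\in\omega,\ V\in\mathcal{V}_n\,\},\qquad A=\bigcup_{n\in\omega}A_n .
\]
Since $D_n\subseteq E_n$, writing $V=E_n\cap O$ with $O$ open in $X$ gives $V\cap D_n=O\cap D_n$, which is open in $Y$; and $V\subseteq U\cap E_n$ for some $U\in\mathcal{U}$ forces $V\cap D_n\subseteq U$. Hence $\mathcal{V}$ is a countable open refinement of $\mathcal{U}$ and $A$ is countable. Finally, given disjoint finite $P,Q\subseteq Y\setminus A$, the finite set $P\cup Q$ lies in some $D_n$ (the $D_n$ increase to $Y$), so $P,Q\subseteq E_n\setminus A_n$; picking $V\in\mathcal{V}_n$ with $P\subseteq V$ and $V\cap Q=\emptyset$, the set $V\cap D_n$ belongs to $\mathcal{V}$, contains $P$ (as $P\subseteq D_n$), and misses $Q$. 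Thus $\mathcal{V}$ is an $\iota$-cover of $Y\setminus A$, so $X\setminus F$ is almost-$\iota$.

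The one real choice here is the exhaustion $\{E_n\}$. The naive alternative — extend $\mathcal{U}$ to an open $\iota$-cover of $X$ by adjoining sets of the form $U\cup W^j_n$, apply almost-$\iota$ to $X$, and trim the refinement back into $\mathcal{U}$ near $F$ — is where I expect trouble: a trimmed set must have the form $V\setminus\overline{W^j_m}$ with $m$ no larger than the level of the neighbourhood of $p_j$ into which $V$ was absorbed, yet $V$ can reach arbitrarily close to $p_j$ through that neighbourhood, so trimming may delete points of the finite set one is trying to capture. Passing to the closed sets $E_n$, on which the almost-$\iota$ hypothesis applies verbatim, avoids this.
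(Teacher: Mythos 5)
Your argument is essentially the paper's: both proofs use the regular $G_\delta$ neighbourhoods of the points of $F$ to exhaust $X\setminus F$ by closed subspaces of $X$, invoke the closed-hereditariness of almost-$\iota$ on each piece, trim the resulting countable refinements back to sets open in $X\setminus F$, and observe that any pair of disjoint finite sets avoiding $F$ and the exceptional sets lands inside a single piece. (The paper works directly with the closed sets $X\setminus U_n$, where $U_n=\bigcup_j W^j_n$, and trims by subtracting $\overline{U_n}$; your $E_n=\overline{D_n}$ with trimming by intersecting with $D_n$ is the same device.) The one step you should not assert is the nesting $\overline{W^j_{n+1}}\subseteq W^j_n$: a regular $G_\delta$ point of a not-necessarily-regular space only comes with open $W^j_n\ni p_j$ satisfying $\{p_j\}=\bigcap_n\overline{W^j_n}$, and arranging each closure inside the previous term is a regularity-type requirement that the hypotheses do not supply. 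Fortunately that claim is used only to get $E_n\subseteq D_{n+1}$, and the sole consequence you actually need, $E_n\cap F=\emptyset$, follows directly: $W^j_n$ is an open neighbourhood of $p_j$ disjoint from $D_n$, so $p_j\notin\overline{D_n}$. Deleting the nesting (while keeping the $W^j_n$ merely decreasing, so that a single $n$ serves all points of a given finite subset of $X\setminus F$ when you show $\bigcup_n D_n=X\setminus F$) leaves a correct proof that matches the paper's.
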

\begin{proof}
Fix $F\in [X]^{<\omega }$ and let ${\mathcal U}$ be any open $\iota $-cover of $X\setminus F$.  Since $X$ has points regular $G_\delta $, let $U_n\subseteq X$ be open such that $F\subseteq U_n$, $\forall n\in \omega $ and $F=\bigcap\overline{U_n}$.  For $n\in \omega $, let ${\mathcal U}_n=\{U\cap X\setminus U_n:U\in {\mathcal U}\}$, which is an open $\iota $-cover of $X\setminus U_n$.  Thus, since $X\setminus U_n$ is almost-$\iota $ (being a closed subspace of an almost-$\iota $ space) let ${\mathcal U}_n^\shortmid $ be a countable open refinement of ${\mathcal U}_n$ and $A_n\in [X\setminus U_n]^{\omega }$ such that ${\mathcal U}_n^\shortmid $ is an $\iota $-cover of $(X\setminus U_n)\setminus A_n$.  Finally, $\forall n\in \omega $, let ${\mathcal V}_n=\{V\setminus \overline{U_n}:V\in {\mathcal U}_n^\shortmid \}$.  The following claim finishes the proof:
\begin{claim}
$\bigcup _{n\in \omega }{\mathcal V}_n$ is a countable open refinement of ${\mathcal U}$ that is an $\iota $-cover of $(X\setminus F)\setminus (\bigcup _{n\in \omega }A_n)$.
\end{claim}
Let $F^\shortmid ,G^\shortmid \in [(X\setminus F)\setminus (\bigcup _{n\in \omega }A_n)]^{<\omega }$ such that $F^\shortmid \cap G^\shortmid =\emptyset $.  Then, $(F^\shortmid \cup G^\shortmid )\cap F=\emptyset $ and $(F^\shortmid \cup G^\shortmid )\cap \bigcup _{n\in \omega }A_n=\emptyset $.  So, $\exists k\in \omega $ such that $(F^\shortmid \cup G^\shortmid )\cap \overline{U_k}=\emptyset $ and $(F^\shortmid \cup G^\shortmid )\cap A_k=\emptyset $.  Thus, $F^\shortmid ,G^\shortmid \in [(X\setminus U_k)\setminus A_k]^{<\omega }$ such that $F^\shortmid \cap G^\shortmid =\emptyset $.  So, let $U \in {\mathcal U_k}^\shortmid $ such that $F^\shortmid \subseteq U$ and $U\cap G^\shortmid =\emptyset $.  Then $U\setminus \overline{U_k}\in {\mathcal V}_k$ such that $F^\shortmid \subseteq U\setminus \overline{U_k}$ and $(U\setminus \overline{U_k})\cap G^\shortmid =\emptyset $. 
\end{proof}

\begin{proof}[Proof of Lemma \ref{almostiotalem}]
Let ${\mathcal U}$ be any open $\iota $-cover of $X$.  Let ${\mathcal M}$ be a countable elementary submodel of $H_{\theta }$ (for $\theta $ large enough) such that ${\mathcal U}, (X,\tau )\in {\mathcal M}$.
\begin{claim}
${\mathcal V}=\{V\in {\mathcal M}\cap \tau :V\subseteq U$ for some $U\in {\mathcal U}\}$ is a countable open refinement of ${\mathcal U}$ that is an $\iota $-cover of $X$.
\end{claim}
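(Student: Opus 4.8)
The plan is to verify the three assertions separately, the $\iota$-cover property being the heart of the matter. Countability is immediate, since $\mathcal{V}\subseteq\mathcal{M}\cap\tau$ and $\mathcal{M}$ is countable. Every member of $\mathcal{V}$ is by definition open and contained in a member of $\mathcal{U}$, so once $\mathcal{V}$ is shown to be an $\iota$-cover (hence an $\omega$-cover, hence a cover) it is automatically a refinement of $\mathcal{U}$.

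For the $\iota$-cover property, fix disjoint finite $F,G\subseteq X$ and split each according to $\mathcal{M}$: let $F_1=F\cap\mathcal{M}$, $G_1=G\cap\mathcal{M}$, $F_0=F\setminus\mathcal{M}$, $G_0=G\setminus\mathcal{M}$. Since $F_1$ and $G_1$ are finite subsets of $\mathcal{M}$, they (and $F_1\cup G_1$) are members of $\mathcal{M}$. The idea is to reflect, through $\mathcal{M}$, the preceding lemma applied to $X\setminus(F_1\cup G_1)$: this space is almost-$\iota$, and $\mathcal{C}:=\{U\setminus(F_1\cup G_1):U\in\mathcal{U},\ F_1\subseteq U,\ U\cap G_1=\emptyset\}$ is an open $\iota$-cover of it (it is an $\iota$-cover because $\mathcal{U}$ is an $\iota$-cover of $X$, applied to pairs $(F_1\cup F',\,G_1\cup G')$, and its members are open since $F_1\cup G_1$ is closed). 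Hence, working inside $\mathcal{M}$, one obtains a countable open refinement $\mathcal{R}$ of $\mathcal{C}$ and a countable $A'\subseteq X\setminus(F_1\cup G_1)$ such that $\mathcal{R}$ is an $\iota$-cover of $(X\setminus(F_1\cup G_1))\setminus A'$, with $\mathcal{R},A'\in\mathcal{M}$. Being countable members of $\mathcal{M}$, both $\mathcal{R}$ and $A'$ are contained in $\mathcal{M}$; and since each member of $\mathcal{R}$ is open in $X$ and sits inside a member of $\mathcal{U}$, we have $\mathcal{R}\subseteq\mathcal{V}$.

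Now the crucial point: $F_0\cup G_0\subseteq X\setminus\mathcal{M}$ is disjoint from $F_1\cup G_1\cup A'\subseteq\mathcal{M}$, so $F_0,G_0$ are disjoint finite subsets of $(X\setminus(F_1\cup G_1))\setminus A'$. Thus there is $R\in\mathcal{R}\subseteq\mathcal{V}$ with $F_0\subseteq R$ and $R\cap(G_0\cup G_1)=\emptyset$ (the second part because $R\subseteq X\setminus(F_1\cup G_1)$). This $R$ takes care of $F_0$; to also absorb $F_1$ I would invoke the regular $G_\delta$ hypothesis. Pick, inside $\mathcal{M}$, for each $f\in F_1$ a decreasing sequence of open sets $\langle W^f_n:n<\omega\rangle$ with $\bigcap_n\overline{W^f_n}=\{f\}$, and pick (again using elementarity) $U_R\in\mathcal{U}\cap\mathcal{M}$ with $F_1\subseteq U_R$, $U_R\cap G_1=\emptyset$ and $R\subseteq U_R$. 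Set $\widetilde R_n:=R\cup\bigl(U_R\cap\bigcup_{f\in F_1}W^f_n\bigr)$. Each $\widetilde R_n$ lies in $\mathcal{M}\cap\tau$ and is contained in $U_R\in\mathcal{U}$, so $\widetilde R_n\in\mathcal{V}$; and choosing $n$ large enough that $\overline{W^f_n}\cap G=\emptyset$ for all $f\in F_1$ (possible since $G$ is finite and $G\cap F_1=\emptyset$), we get $F=F_0\cup F_1\subseteq\widetilde R_n$ and $\widetilde R_n\cap G=\emptyset$, as required.

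The step I expect to be the main obstacle is this last merging. The tempting move --- produce a member of $\mathcal{V}$ covering $F$ by taking the union of one set handling $F_0$ and one handling $F_1$ --- fails, because a union of two sets each contained in some member of $\mathcal{U}$ need not be contained in any single member of $\mathcal{U}$, hence need not be in $\mathcal{V}$. The way around it is exactly to run the reflected almost-$\iota$ argument on $X\setminus(F_1\cup G_1)$ rather than on $X$, and then to re-insert the (model) points of $F_1$ using $G_\delta$-neighbourhoods trimmed inside the single set $U_R$. What makes the whole scheme close up is the elementary-submodel phenomenon that the countable exceptional set $A'$ produced by reflection lands inside $\mathcal{M}$ and therefore automatically misses the non-model parts $F_0$ and $G_0$.
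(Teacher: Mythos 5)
Your argument is correct and follows essentially the same route as the paper's proof: split $F,G$ along $\mathcal{M}$, reflect the almost-$\iota$ property of $X$ minus the model part of $F\cup G$ through $\mathcal{M}$ so that the countable exceptional set lands inside $\mathcal{M}$ and is automatically avoided by the non-model points, and then re-attach $F\cap\mathcal{M}$ using regular $G_\delta$ neighbourhoods trimmed inside a single member of $\mathcal{U}\cap\mathcal{M}$ containing the chosen refinement element. The only differences (subtracting $F_1\cup G_1$ from the members of the reflected cover, and running the $G_\delta$ sequences per point rather than for the whole finite set at once) are cosmetic.
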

Let $F^\shortmid ,G^\shortmid \in [X]^{<\omega }$ such that $F^\shortmid \cap G^\shortmid =\emptyset $.  Let $F=F^\shortmid \cap {\mathcal M}$ and $G=G^\shortmid \cap {\mathcal M}$.  By elementarity, ${\mathcal M}\models (X\setminus E$ is almost-$\iota $, $\forall E\in [X]^{<\omega })$.  Thus, since $F,G\in {\mathcal M}$, ${\mathcal M}\models X\setminus (F\cup G)$ is almost-$\iota $.  Notice ${\mathcal U}_{FG}=\{U\in {\mathcal U}:F\subseteq U, U\cap G=\emptyset \}\in {\mathcal M}$ is an open $\iota $-cover of $X\setminus (F\cup G)$.  So, let ${\mathcal V}_{FG}\in {\mathcal M}$ be a countable open refinement of ${\mathcal U}_{FG}$ and $A_{FG}\in [X]^{\omega }\cap {\mathcal M}$ such that ${\mathcal V}_{FG}$ is an $\iota $-cover of $(X\setminus (F\cup G))\setminus A_{FG}$.  Then, ${\mathcal V}_{FG},A_{FG}\subseteq {\mathcal M}$. 
Thus $F^\shortmid \setminus F, G^\shortmid \setminus G\in [(X\setminus (F\cup G))\setminus A_{FG}]^{<\omega }$ 
such that $F^\shortmid \setminus F \cap G^\shortmid \setminus G =\emptyset $.  So let $V\in {\mathcal V}_{FG}$ such that $F^\shortmid \setminus F\subseteq V $ and $V\cap G^\shortmid \setminus G=\emptyset $.  Since ${\mathcal V}_{FG}$ refines ${\mathcal U}_{FG}$, $\exists U\in {\mathcal U}_{FG}(V\subseteq U)$.  So, by elementarity, let $U\in {\mathcal U}_{FG}\cap {\mathcal M}$ such that $V\subseteq U$.  Also, ${\mathcal M}\models (x$ is regular $G_\delta $, $\forall x\in X$), so in particular, since $F\in {\mathcal M}$ is finite, let $U_n\in {\mathcal M}$ such that $F\subseteq U_n$, $\forall n\in \omega $ and $F=\bigcap _{n\in \omega }\overline{U_n}$.  Then, since $G^\shortmid \cap F=\emptyset$, $\exists n\in \omega $ such that $G^\shortmid \cap \overline{U_n}=\emptyset $ and 
$V\cup (U_n\cap U)\in {\mathcal V}$ such that $F^\shortmid \subseteq V\cup (U_n\cap U)$ and $G^\shortmid \cap (V\cup (U_n\cap U))=\emptyset $.  
\end{proof} 

Returning again to the relationship with countable network weight, we have seen some (consistent) counterexamples, but restricting ourselves to the hereditary property raises the natural question.

\begin{question}
Is every hereditarily $\iota$-space a space with a countable network?
\end{question}

\section{$\epsilon $-spaces}
Theorem \ref{epsilon+weakiota} provides us with an instance when $\epsilon $-spaces and $\iota $-spaces are equivalent.  We investigate what additional characteristics can be placed on an $\epsilon $-space to ensure it is an $\iota $-space. 

\begin{definition}
Let $\mathcal{U}$ be a cover of a space $X$. We say that $\mathcal{U}$ is a \emph{regular 1-ota cover} if for every $x\neq y \in X$ there is $U \in \mathcal{U}$ such that $x \in U$ and $y \notin \overline{U}$.
\end{definition}

\begin{lemma} \label{lemregular}
Let $\mathcal{U}$ be an $\iota$-cover of the regular space $X$. Then $\mathcal{U}$ has a regular 1-ota refinement.
\end{lemma}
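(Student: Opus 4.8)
The plan is to let $\mathcal{V}$ be the collection of all open subsets of $X$ whose closure is contained in some member of $\mathcal{U}$, i.e.
$$\mathcal{V}=\{V\subseteq X: V\text{ is open and }\overline{V}\subseteq U\text{ for some }U\in\mathcal{U}\},$$
and to check that this is the desired regular 1-ota refinement. Two things must be verified: that $\mathcal{V}$ is in fact a refinement of $\mathcal{U}$ covering $X$, and that it is a regular 1-ota cover.

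First I would observe that $\mathcal{V}$ refines $\mathcal{U}$ by construction, since for $V\in\mathcal{V}$ we have $V\subseteq\overline{V}\subseteq U$ for the witnessing $U\in\mathcal{U}$. To see that $\mathcal{V}$ covers $X$: since $\mathcal{U}$ is an $\iota$-cover it is in particular an $\omega$-cover, hence a cover of $X$; so given $x\in X$ pick $U\in\mathcal{U}$ with $x\in U$, and by regularity of $X$ (the point $x$ is not in the closed set $X\setminus U$) choose an open $V$ with $x\in V\subseteq\overline{V}\subseteq U$. Then $V\in\mathcal{V}$ and $x\in V$.

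Next I would verify the regular 1-ota property. Fix $x\neq y$ in $X$. Applying the definition of $\iota$-cover to the disjoint finite sets $F=\{x\}$ and $G=\{y\}$ yields some $U\in\mathcal{U}$ with $x\in U$ and $y\notin U$. Again by regularity, since $x\notin X\setminus U$ and the latter is closed, there is an open set $V$ with $x\in V\subseteq\overline{V}\subseteq U$. Then $V\in\mathcal{V}$, $x\in V$, and since $\overline{V}\subseteq U$ while $y\notin U$ we get $y\notin\overline{V}$, as required.

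I do not expect any real obstacle here: the argument is a direct unwinding of the definitions, using the $\iota$-cover hypothesis only for singleton pairs and using regularity to shrink a neighborhood so that its closure stays inside a member of $\mathcal{U}$. The only point deserving an explicit sentence is that $\mathcal{V}$ genuinely covers $X$, which as noted follows from regularity applied at each point together with the fact that an $\iota$-cover is a cover.
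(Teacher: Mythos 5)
Your proof is correct and is essentially the argument in the paper: apply the $\iota$-cover property to the singletons $\{x\},\{y\}$ and shrink the resulting $U$ by regularity to an open $V$ with $\overline{V}\subseteq U$. The only (inessential) difference is that you take the family of \emph{all} open sets whose closures lie in some member of $\mathcal{U}$, whereas the paper selects one such $V(x,y)$ for each pair; both families work for exactly the reasons you give.
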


\begin{proof}
For every $x\neq y \in X$ choose $U(x,y) \in \mathcal{U}$ such that $x \in U(x,y)$ and $y \notin U(x,y)$.  Now let $V(x,y)$ be an open set such that $x \in V(x,y) \subset \overline{V(x,y)} \subset U(x,y)$. Then $\mathcal{V}=\{V(x,y): x\neq y \in X \}$ is a regular 1-ota refinement of $\mathcal{U}$.
\end{proof}

\begin{lemma}\label{tool}
Let $X$ be a regular space such that $X^2 \setminus \Delta$ is Lindel\"of. Then $X$ is 1-ota.
\end{lemma}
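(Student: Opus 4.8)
The plan is to transfer the problem from $X$ to the Lindel\"of space $X^2\setminus\Delta$. Fix an arbitrary open $1$-ota cover $\mathcal U$ of $X$; we may assume $|X|\ge 2$, since otherwise the assertion is trivial. First I would replace $\mathcal U$ by a \emph{regular} $1$-ota refinement: the argument proving Lemma~\ref{lemregular} only invokes the covering hypothesis on singletons, so it applies verbatim to a $1$-ota cover. Namely, for each pair of distinct points $x,y$ choose $U(x,y)\in\mathcal U$ with $x\in U(x,y)$ and $y\notin U(x,y)$, and, using regularity of $X$, an open $V(x,y)$ with $x\in V(x,y)\subseteq\overline{V(x,y)}\subseteq U(x,y)$. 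Then $\mathcal V=\{V(x,y):x\neq y\in X\}$ is a regular $1$-ota open refinement of $\mathcal U$, so it suffices to produce a countable open $1$-ota cover refining $\mathcal V$, as any such family also refines $\mathcal U$.

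Next I would push $\mathcal V$ to the deleted square. Consider the family $\{V\times(X\setminus\overline{V}):V\in\mathcal V\}$. Each of these sets is an open subset of $X^2\setminus\Delta$ (if $x\in V$ and $y\notin\overline{V}$ then $x\neq y$), and together they cover $X^2\setminus\Delta$: given $(x,y)$ with $x\neq y$, the regular $1$-ota property of $\mathcal V$ furnishes $V\in\mathcal V$ with $x\in V$ and $y\notin\overline{V}$, i.e. $(x,y)\in V\times(X\setminus\overline{V})$. Since $X^2\setminus\Delta$ is Lindel\"of, there is a countable subcover $\{V_n\times(X\setminus\overline{V_n}):n\in\omega\}$ with each $V_n\in\mathcal V$.

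Finally, a routine check shows that $\mathcal W=\{V_n:n\in\omega\}$ is the desired family. It is a countable subfamily of $\mathcal V$, hence an open refinement of $\mathcal U$; it covers $X$, since for $x\in X$ we may pick $y\neq x$ and then $(x,y)$ lies in some $V_n\times(X\setminus\overline{V_n})$, forcing $x\in V_n$; and it is $1$-ota, since for distinct $x,y$ the same $n$ gives $x\in V_n$ and $y\notin\overline{V_n}\supseteq V_n$. I do not expect a genuine obstacle here; the only point worth flagging is that the passage to the regular $1$-ota refinement in the first step is essential, because the sets $V\times(X\setminus V)$ need not be open in $X^2$. Thus regularity of $X$ is used exactly once, to replace $\mathcal U$ by $\mathcal V$, after which the ``push to the square, apply Lindel\"of, pull back'' scheme goes through cleanly.
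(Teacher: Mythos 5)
Your proof is correct, and it takes a genuinely different (and more direct) route than the paper. The paper argues by contradiction: it takes a regular $1$-ota refinement $\mathcal V$ of minimal uncountable size $\kappa$, and for each initial segment $\mathcal V_\alpha$ of an enumeration it forms the set $A(\mathcal V_\alpha)$ of pairs $(x,y)\in X^2\setminus\Delta$ not yet ``regularly separated'' by any member of $\mathcal V_\alpha$; these sets are nonempty (by minimality of $\kappa$) and closed in $X^2\setminus\Delta$, so they form an uncountable decreasing chain of nonempty closed sets in a Lindel\"of space, whose intersection is nonempty --- contradicting that $\mathcal V$ is a regular $1$-ota cover. You instead push the regular refinement forward to the genuine open cover $\{V\times(X\setminus\overline V):V\in\mathcal V\}$ of $X^2\setminus\Delta$, extract a countable subcover by Lindel\"ofness, and pull back; this yields the countable $1$-ota refinement constructively rather than by ruling out its nonexistence. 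Both proofs use regularity in exactly the same place (Lemma~\ref{lemregular}, whose argument, as you correctly observe, only needs the $1$-ota hypothesis), and both ultimately rest on the same openness computation for $V\times(X\setminus\overline V)$ --- it appears in the paper as the proof that $A(\mathcal V_\alpha)$ is closed. Your version is shorter, avoids the slight bookkeeping awkwardness in the paper around $\kappa$ versus $\omega_1$, and gives a little more: the countable refinement you produce is itself a regular $1$-ota cover and a subfamily of $\mathcal V$. It also adapts without difficulty to the $2n$-dimensional setting of Theorem~\ref{powersthm}, using $\{U^n\times(X\setminus\overline U)^n: U\in\mathcal V\}$ as the open cover of $X^{2n}\setminus\Delta_{2n}$.
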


\begin{proof}
Let $\mathcal{U}$ be a 1-ota cover for $X$ without a countable 1-ota refinement. Let $\mathcal{V}$ be a regular 1-ota refinement of $\mathcal{U}$ having minimal size $\kappa \geq \omega_1$. 

Fix an enumeration $\{V_\alpha: \alpha < \kappa \}$ of $\mathcal{V}$ and let $\mathcal{V}_\alpha:=\{V_\beta: \beta \leq \alpha \}$ and:

$$A(\mathcal{V}_\alpha):=\{ (x,y) \in X^2 \setminus \Delta: (\forall U \in \mathcal{V_\alpha}) ((x \in U \wedge y \in \overline{U}) \vee (x \in \overline{U} \wedge y \in U) \vee (\{x,y\} \cap U=\emptyset ) \}.$$

\noindent {\bf Claim.} $A(\mathcal{V}_\alpha) \neq \emptyset$ and $A(\mathcal{V}_\alpha)$ is closed in $X^2 \setminus \Delta$ for every $\alpha < \omega_1$.

\begin{proof}[Proof of Claim] The fact that $A(\mathcal{V}_\alpha) \neq \emptyset$ follows from the assumptions about $\mathcal{U}$. To prove that $A(\mathcal{V}_\alpha)$ is closed, let $(x,y) \notin A(\mathcal{V}_\alpha) \cup \Delta$. Then we can find $U_x, U_y \in \mathcal{V}_\alpha$ such that $x \in U_x$, $y \notin \overline{U_x}$, $y \in U_y$ and $x \notin \overline{U_y}$. Now $(U_x \setminus \overline{U_y}) \times (U_y \setminus \overline{U_x})$ is an open neighbourhood of $(x,y)$ which misses $A(\mathcal{V}_\alpha)$.
\end{proof}
So $\{A(\mathcal{V}_\alpha): \alpha < \kappa \}$ is an uncountable decreasing sequence of non-empty closed subsets of the Lindel\"of space $X^2 \setminus \Delta$ and thus $\bigcap \{A(\mathcal{V}_\alpha): \alpha < \omega_1 \} \neq \emptyset$ which contradicts regularity of $\mathcal{V}$.
\end{proof}

Note that the fact that $X^2 \setminus \Delta$ is Lindel\"of implies that $X$ has a $G_\delta$ diagonal, whenever $X$ is regular. Indeed, for every $x \in X^2 \setminus \Delta$, let $U_x$ be an open neighbourhood of $x$ such that $\overline{U_x} \cap \Delta = \emptyset$. The family $\{U_x: x \in X^2 \setminus \Delta \}$ covers $X^2 \setminus \Delta$, and hence there is a countable set $C \subset X^2 \setminus \Delta$ such that $X^2 \setminus \Delta = \bigcup \{\overline{U_x}: x \in C \}$ and hence $\Delta = \bigcap_{x \in C} X^2 \setminus \overline{U_x}$, which proves that $\Delta$ is a $G_\delta$ subset of $X$.

The following lemma is not new. For example, the proof of a more general statement can be found in \cite{BG}. We nevertheless include a quick direct proof of it for the reader's convenience.

\begin{lemma}
Every countably compact 1-ota space $X$ is metrizable.
\end{lemma}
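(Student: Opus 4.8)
The plan is to show that a countably compact 1-ota space $X$ is compact with a $G_\delta$ diagonal, and then invoke the classical theorem (due to Chaber) that a countably compact space with a $G_\delta$ diagonal is metrizable. First I would establish that a 1-ota cover gives enough information to separate points by closures: by Lemma~\ref{lemregular} (after noting that regularity can be arranged, since countably compact plus Hausdorff-type separation in the 1-ota definition yields regularity, or by working directly with the regular 1-ota refinement), we may assume our 1-ota cover $\mathcal{U}$ has a countable 1-ota refinement $\mathcal{V} = \{V_n : n \in \omega\}$. The key observation is that the family $\{V_n \times V_n : n \in \omega\}$ together with the complements of the closures witnesses that $\Delta = \bigcap_{n} (X^2 \setminus (\overline{V_n} \times X)) \cup \dots$; more precisely, for $x \neq y$ there is $n$ with $x \in V_n$ and $y \notin \overline{V_n}$, so $(x,y) \notin \overline{V_n} \times \overline{V_n}$ is not quite it — rather one shows $\Delta = \bigcap_n \bigl( (X \times X) \setminus W_n \bigr)$ for suitable open $W_n$ built from the $V_n$, giving a $G_\delta$ diagonal.

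The more careful route: start with the 1-ota cover, pass to a regular 1-ota refinement $\mathcal{V}$ by Lemma~\ref{lemregular}, and then use countable compactness. Here the point is that from $\mathcal{V}$ one can extract a \emph{countable} subfamily that is still a regular 1-ota cover, because if no countable subfamily worked, one could run an argument analogous to the proof of Lemma~\ref{tool} — countable compactness lets a decreasing $\omega_1$-sequence of nonempty closed sets $A(\mathcal{V}_\alpha)$ in $X^2$ (or in $X^2 \setminus \Delta$) have nonempty intersection, contradicting regularity of $\mathcal{V}$. Once we have a countable regular 1-ota cover $\{V_n : n \in \omega\}$, it immediately yields a $G_\delta$ diagonal: $\Delta = \bigcap_n \bigl((V_n \times (X \setminus \overline{V_n}))^c \cap ((X\setminus \overline{V_n}) \times V_n)^c\bigr)$, or simply observe that the countably many open sets $O_n = (X \times X) \setminus (\overline{V_n} \times (X \setminus V_n) \cup (X \setminus V_n) \times \overline{V_n})$ have intersection exactly $\Delta$, because any off-diagonal pair $(x,y)$ is excluded by the $n$ for which $x \in V_n$, $y \notin \overline{V_n}$ (after symmetrizing).

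Having shown $X$ is countably compact with a $G_\delta$ diagonal, I would then cite the theorem that such a space is compact and metrizable: a countably compact space with a $G_\delta$ diagonal is metrizable (this is a well-known result; a countably compact space with a $G_\delta$ diagonal is in fact compact, hence Lindel\"of plus regular plus $G_\delta$ diagonal, hence second countable and metrizable, or one uses Chaber's theorem directly). Alternatively, if the paper prefers to stay self-contained, one can argue: from the countable 1-ota cover build a countable family of open covers refining the diagonal, show $X$ has a point-countable base or a $\sigma$-disjoint network, and conclude metrizability via the Nagata–Smirnov or Bing metrization theorem; but invoking the $G_\delta$-diagonal metrization result for countably compact spaces is cleanest.

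The main obstacle I anticipate is the passage from an arbitrary 1-ota cover to a \emph{countable} one under countable compactness — i.e., showing that countable compactness forces every open 1-ota cover to have a countable 1-ota (or regular 1-ota) subcover/refinement. This is where one reuses the machinery of Lemma~\ref{tool}: the sets $A(\mathcal{V}_\alpha)$ are closed in $X^2$ (not just in $X^2 \setminus \Delta$, once we know enough separation), they are nonempty and decreasing along $\omega_1$, and countable compactness of $X^2$ (which follows since $X$ is countably compact and — crucially — we will have shown $X$ is actually compact, or we argue $A(\mathcal{V}_\alpha) \cap \Delta = \emptyset$ is preserved so the sets live in a countably compact relatively closed subset) gives a common point, contradicting the regularity (in the sense of the $\overline{V}$-separation) of $\mathcal{V}$. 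Handling the diagonal carefully — ensuring the decreasing closed sets stay off the diagonal or that $X^2$ is countably compact — is the delicate bookkeeping, but it is routine once one mirrors the earlier proof.
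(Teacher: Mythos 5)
Your route (countable regular $1$-ota cover $\Rightarrow$ $G_\delta$ diagonal $\Rightarrow$ Chaber plus the compact-$G_\delta$-diagonal metrization theorem) is genuinely different from the paper's, but it has three concrete problems. First, the ``main obstacle'' you anticipate is not there: the hypothesis is that $X$ is a $1$-ota \emph{space}, which by Definition~\ref{defiotaspace} already means that every open $1$-ota cover has a countable $1$-ota refinement. So the countable cover is handed to you (apply Lemma~\ref{lemregular} first to get a regular $1$-ota cover, then take its countable $1$-ota refinement); no analogue of Lemma~\ref{tool} is needed. Moreover, the fallback you propose for this phantom obstacle would not work: countable compactness guarantees nonempty intersections only for \emph{countable} decreasing chains of closed sets, not for $\omega_1$-chains (the tails $[\alpha,\omega_1)$ in the countably compact space $\omega_1$ form a decreasing $\omega_1$-chain of nonempty closed sets with empty intersection), so the $A(\mathcal{V}_\alpha)$ argument of Lemma~\ref{tool} cannot be run under countable compactness alone. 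Second, the step ``a countable regular $1$-ota cover immediately yields a $G_\delta$ diagonal'' is not established by what you write: your sets $O_n$ are not open (their complements are products of a closed set with an open set, hence not closed), and they fail to contain the diagonal points $(x,x)$ with $x \in \overline{V_n}\setminus V_n$; the natural repairs run into the same boundary-point problem, since $\overline{V_n \times (X\setminus\overline{V_n})}$ can meet $\Delta$. Third, even granting the $G_\delta$ diagonal, quoting the metrization theorem for compact spaces with a $G_\delta$ diagonal is circular here: the paper derives exactly that statement as a \emph{corollary} of the present lemma, and the stated purpose of including the lemma is to give a quick self-contained proof.

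The paper's argument avoids all of this by never passing through the diagonal. With $\mathcal{V}$ a countable regular $1$-ota cover, it shows directly that the countable family $\mathcal{B}=\{X\setminus\overline{\bigcup\mathcal{F}}:\mathcal{F}\in[\mathcal{V}]^{<\omega}\}$ is a base: given $x\in U$ open, each $y\in X\setminus U$ lies in some $U_y\in\mathcal{V}$ with $x\notin\overline{U_y}$, and countable compactness of the closed set $X\setminus U$ extracts a finite subfamily $F$ with $X\setminus U\subseteq\bigcup_{y\in F}U_y$, whence $x\in X\setminus\overline{\bigcup_{y\in F}U_y}\subseteq U$. Thus $X$ is second countable, and metrizability follows with no appeal to external $G_\delta$-diagonal theorems. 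If you want to salvage your approach, the honest statement of the remaining work is: justify that the countable $1$-ota refinement can be taken \emph{regular} (a point the paper also treats lightly), and then either prove the $G_\delta$-diagonal claim correctly or, better, imitate the paper's direct construction of a countable base.
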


\begin{proof}
Let $\mathcal{U}$ be an $\iota$-cover of $X$. By Lemma $\ref{lemregular}$ we can assume that $\mathcal{U}$ is a regular $\iota$-cover. Let $\mathcal{V}$ be a countable $\iota$-refinement of $\mathcal{U}$ and let $\mathcal{B}=\{ X \setminus \overline{\bigcup \mathcal{F}}: \mathcal{F} \in [\mathcal{V}]^{<\omega}\}$. The set $\mathcal{B}$ is countable. We claim that $\mathcal{B}$ is a base of $X$, proving that $X$ is metrizable.

To see that let $U$ be an open set and $x \in U$. For every $y \in X \setminus U$ choose an open set $U_y \in \mathcal{V}$ such that $y \in U_y$ and $x \notin \overline{U_y}$. The countable set $\{U_y: y \in X \setminus U \}$ covers $X \setminus U$ so we can choose a finite set $F \subset X \setminus U$ such that $X \setminus U \subset \bigcup_{y \in F} U_y$. Then $x \in \bigcap_{y \in F} X \setminus \overline{U_y} \subset U$ and hence $\mathcal{B}$ is a base.
\end{proof}

\begin{corollary}
There are no countably compact strong $L$-spaces.
\end{corollary}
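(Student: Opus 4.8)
The plan is to read the statement off from Lemma \ref{tool}, the metrization lemma just proved, and two classical facts about metrizable spaces. Recall that a \emph{strong $L$-space} is a regular, non-separable space all of whose finite powers are hereditarily Lindel\"of. Suppose, toward a contradiction, that $X$ is a countably compact strong $L$-space.

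First I would observe that $X^2$ is hereditarily Lindel\"of, so in particular the subspace $X^2 \setminus \Delta$ is Lindel\"of, and $X$ is regular; hence Lemma \ref{tool} applies and shows that $X$ is 1-ota. (This is the only place the word ``strong'' is used: we need hereditary Lindel\"ofness of the \emph{square}, not merely of $X$.) Now $X$ is a countably compact 1-ota space, so by the preceding lemma $X$ is metrizable. A countably compact metrizable space is compact, and a compact metrizable space is second countable, hence separable (indeed hereditarily separable). This contradicts the non-separability of the $L$-space $X$, which completes the proof.

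There is no genuine obstacle here beyond what has already been established: all the substance sits in Lemma \ref{tool} — whose proof passes through a minimal-size regular 1-ota refinement and the decreasing $\omega_1$-sequence of nonempty closed sets $A(\mathcal{V}_\alpha)$ inside the Lindel\"of space $X^2\setminus\Delta$ — and in the metrization lemma. The points that require a little care in writing it up are simply: invoking regularity of $X$ where Lemma \ref{tool} needs it, using hereditary Lindel\"ofness of $X^2$ rather than of $X$ alone to get $X^2 \setminus \Delta$ Lindel\"of, and recalling the standard chain metrizable $+$ countably compact $\Rightarrow$ compact $\Rightarrow$ separable.
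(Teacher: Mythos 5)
Your argument is correct and is essentially the paper's own proof: hereditary Lindel\"ofness of $X^2$ gives that $X^2\setminus\Delta$ is Lindel\"of, Lemma \ref{tool} then makes $X$ a 1-ota space, and the metrization lemma for countably compact 1-ota spaces yields metrizability and hence separability, contradicting the definition of an $L$-space. The only difference is that you spell out the routine steps (regularity, countably compact metrizable $\Rightarrow$ compact $\Rightarrow$ separable) that the paper leaves implicit.
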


\begin{proof}
If $X^2$ is hereditarily Lindel\"of then $X$ is 1-ota and every countably compact 1-ota space is metrizable and thus separable.
\end{proof}

\begin{corollary}
Every compact space with a $G_\delta$ diagonal is metrizable.
\end{corollary}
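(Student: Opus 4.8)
The plan is to obtain this as a short consequence of the two lemmas just proved: Lemma~\ref{tool}, which says that a regular space $X$ with $X^2 \setminus \Delta$ Lindel\"of is 1-ota, and the lemma stating that every countably compact 1-ota space is metrizable. So it suffices to check that a compact (Hausdorff) space $X$ with a $G_\delta$ diagonal meets the hypotheses of both, namely that $X$ is regular and countably compact and that $X^2 \setminus \Delta$ is Lindel\"of.

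Two of these are immediate: a compact Hausdorff space is normal, hence regular, and a compact space is of course countably compact. For the Lindel\"of property of $X^2 \setminus \Delta$, I would argue as follows. Since $X$ is compact, so is $X^2$. Write $\Delta = \bigcap_{n < \omega} U_n$ with each $U_n$ open in $X^2$; then $X^2 \setminus \Delta = \bigcup_{n < \omega} (X^2 \setminus U_n)$ is a countable union of closed, hence compact, subsets of $X^2$. Thus $X^2 \setminus \Delta$ is $\sigma$-compact and in particular Lindel\"of.

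With these in hand, Lemma~\ref{tool} yields that $X$ is 1-ota, and since $X$ is countably compact, the lemma on countably compact 1-ota spaces gives that $X$ is metrizable, which completes the proof.

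I do not expect a real obstacle here; the proof is essentially a recombination of earlier results. The only points deserving a word of care are the (routine) step from ``$\Delta$ is $G_\delta$ in $X^2$'' to ``$X^2 \setminus \Delta$ is an $F_\sigma$, hence $\sigma$-compact, subset of the compact space $X^2$'', and the tacit reading of ``compact'' as ``compact Hausdorff'' so that regularity is available for Lemma~\ref{tool}. It is also worth noting in passing that this recovers Chaber's classical theorem as a transparent corollary of the 1-ota machinery developed above.
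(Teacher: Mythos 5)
Your proof is correct and follows exactly the paper's route: show $X^2\setminus\Delta$ is $\sigma$-compact (hence Lindel\"of), apply Lemma~\ref{tool} to get that $X$ is 1-ota, and conclude metrizability from the lemma on countably compact 1-ota spaces. You merely spell out the details (regularity of compact Hausdorff spaces, the $F_\sigma$ decomposition of the off-diagonal) that the paper leaves implicit.
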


\begin{proof}
If $X$ is a compact space with a $G_\delta$ diagonal then $X^2 \setminus \Delta$ is $\sigma$-compact. Thus $X$ is a compact 1-ota space and hence it's metrizable.
\end{proof}

Generalizing Lemma \ref{tool} provides us with a characterization that we are looking for.

Let $\Delta_n=\{(x_1, \dots, x_n) \in X^n: |\{x_1, x_2, \dots, x_n \}|<n\}$. Clearly, $\Delta_n$ is a closed subset of $X^n$.

\begin{theorem} \label{powersthm}
Assume $X$ is a regular space. If $X^{2n} \setminus \Delta_{2n}$ is Lindel\"of for every $n\in \omega $ then $X$ is an $\iota$-space.
 \end{theorem}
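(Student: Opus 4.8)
The plan is to generalize the argument of Lemma~\ref{tool} from the case $n=1$ to the general $n$-ota setting, and then to combine the resulting statement with Theorem~\ref{epsilon+weakiota}. First, I would establish a version of Lemma~\ref{lemregular} for $n$-ota covers: given an $n$-ota cover $\mathcal{U}$ of a regular space $X$, I want a \emph{regular} $n$-ota refinement $\mathcal{V}$, meaning that for every pair of disjoint $F,G\in[X]^n$ there is $V\in\mathcal{V}$ with $F\subseteq V$ and $G\cap\overline{V}=\emptyset$. This follows exactly as in Lemma~\ref{lemregular}: for each such pair pick $U(F,G)\in\mathcal{U}$ separating them, then shrink it using regularity (applied finitely many times, once per point of $F$, or once using that $X$ is regular and $G$ is finite) to get $V(F,G)$ with $F\subseteq V(F,G)\subseteq\overline{V(F,G)}\subseteq U(F,G)$, so $\overline{V(F,G)}\cap G=\emptyset$.

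Next I would run the analogue of the main argument of Lemma~\ref{tool}. Suppose toward a contradiction that $X$ is not $n$-ota for some $n$; fix an $n$-ota cover $\mathcal{U}$ with no countable $n$-ota refinement and let $\mathcal{V}=\{V_\alpha:\alpha<\kappa\}$ be a regular $n$-ota refinement of minimal size $\kappa\geq\omega_1$. For $\alpha<\kappa$ set $\mathcal{V}_\alpha=\{V_\beta:\beta\leq\alpha\}$ and define, for a point $z=(x_1,\dots,x_n,y_1,\dots,y_n)\in X^{2n}\setminus\Delta_{2n}$ with all coordinates distinct, the condition that $z\in A(\mathcal{V}_\alpha)$ iff for no $U\in\mathcal{V}_\alpha$ do we have $\{x_1,\dots,x_n\}\subseteq U$ and $\{y_1,\dots,y_n\}\cap\overline{U}=\emptyset$ (and, to make the set closed, symmetrically also not the reverse separation). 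More precisely, put
$$A(\mathcal{V}_\alpha)=\{z\in X^{2n}\setminus\Delta_{2n}:(\forall U\in\mathcal{V}_\alpha)\,\neg(F_z\subseteq U\wedge G_z\cap\overline{U}=\emptyset)\wedge\neg(G_z\subseteq U\wedge F_z\cap\overline{U}=\emptyset)\},$$
where $F_z=\{x_1,\dots,x_n\}$ and $G_z=\{y_1,\dots,y_n\}$. One checks $A(\mathcal{V}_\alpha)\neq\emptyset$ (otherwise $\mathcal{V}_\alpha$ would be a countable $n$-ota refinement of $\mathcal{U}$, using regularity of $\mathcal{V}$ to recover plain $n$-ota separation), and that $A(\mathcal{V}_\alpha)$ is closed in $X^{2n}\setminus\Delta_{2n}$: if $z\notin A(\mathcal{V}_\alpha)$ and all coordinates of $z$ are distinct, pick $U\in\mathcal{V}_\alpha$ witnessing failure, say $F_z\subseteq U$ and $G_z\cap\overline{U}=\emptyset$, and take the basic neighbourhood of $z$ consisting of $U$ in each of the first $n$ coordinates and $X\setminus\overline{U}$ in each of the last $n$; this neighbourhood misses $A(\mathcal{V}_\alpha)$. (Note $X^{2n}\setminus\Delta_{2n}$ is exactly the set of tuples with pairwise distinct coordinates.) Then $\{A(\mathcal{V}_\alpha):\alpha<\omega_1\}$ is a decreasing $\omega_1$-sequence of nonempty closed subsets of the Lindel\"of space $X^{2n}\setminus\Delta_{2n}$, so it has nonempty intersection, yielding a tuple $z$ with disjoint $F_z,G_z\in[X]^n$ not separated (even in the regular sense) by any member of $\mathcal{V}$, contradicting that $\mathcal{V}$ is a regular $n$-ota cover. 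Hence $X$ is $n$-ota for every $n$.

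Finally I would pass from "$X$ is $n$-ota for every $n$" to "$X$ is an $\iota$-space". The point is that an open $\iota$-cover $\mathcal{U}$ of $X$ is in particular an open $\omega$-cover, and also each $\mathcal{U}$ being an $\iota$-cover means it is an $n$-ota cover for every $n$; taking a countable $n$-ota refinement for each $n$ and intersecting across $n$ (fattening suitably, or simply noting that the countable union $\bigcup_n \mathcal{V}_n$ of countable $n$-ota refinements, where $\mathcal{V}_n\prec\mathcal{U}$, is a countable refinement of $\mathcal{U}$ that is an $\iota$-cover) gives a countable $\iota$-refinement. Alternatively, and more cleanly, I would observe that $X^{2}\setminus\Delta_{2}$ Lindel\"of already gives a $G_\delta$ diagonal, and that $X$ being Lindel\"of follows from $n=1$ wait---actually Lindel\"ofness of $X$ itself needs a separate word: $X$ is a continuous image of the closed subspace $\Delta_{2n}\cap(\text{something})$... it is cleaner to note $X$ embeds as a retract-like closed copy is not immediate, so instead: $X^{2}\setminus\Delta_2$ Lindel\"of plus regularity does not instantly give $X$ Lindel\"of, so I would instead remark that $X$ itself is Lindel\"of because $X\cong$ the closed subspace of $X^{2n}$ consisting of constant tuples is not in $X^{2n}\setminus\Delta_{2n}$; the right move is to get $\epsilon$-ness directly: since $X$ is $n$-ota for all $n$, in particular given an open $\omega$-cover refine it to an $\iota$-cover (Proposition (2)), take a countable $\iota$-refinement via the $n$-ota property glued over $n$, and fatten it (Proposition (3)) to a countable $\omega$-subcover of the original cover---so $X$ is an $\epsilon$-space. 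Also $X$ is an $\iota_w$-space: any countable $n$-ota refinement of the trivial cover... no, rather, the existence of \emph{some} countable open $\iota$-cover must be extracted, which again follows from gluing countable $n$-ota refinements of a fixed open $\iota$-refinement of $\{X\}$---hmm, $\{X\}$ refined to an $\iota$-cover then to a countable $n$-ota refinement for each $n$, unioned over $n$. Thus $X$ is both an $\epsilon$-space and an $\iota_w$-space, and Theorem~\ref{epsilon+weakiota} finishes it. The main obstacle I anticipate is bookkeeping in the definition of $A(\mathcal{V}_\alpha)$ so that it is simultaneously nonempty, closed, and decreasing --- in particular handling the $2n$ coordinates and ensuring the "or" clause that makes it closed does not destroy nonemptiness; this is the step requiring the most care, and it is precisely where the jump from $\Delta$ to $\Delta_{2n}$ is used.
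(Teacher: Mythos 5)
Your proposal follows essentially the same route as the paper's proof: pass to a regular $n$-ota refinement of minimal uncountable size, form the closed sets $A(\mathcal{V}_\alpha)\subseteq X^{2n}\setminus\Delta_{2n}$ (your symmetric formulation of the defining condition is logically equivalent to the paper's three-way disjunction, and your closedness argument via the basic neighbourhood $U^n\times(X\setminus\overline{U})^n$ is the same idea), and derive a contradiction from the decreasing uncountable chain of nonempty closed sets in the Lindel\"of space $X^{2n}\setminus\Delta_{2n}$. The only real difference is that you spell out the final step from ``$X$ is $n$-ota for every $n$'' to ``$X$ is an $\iota$-space'' (by taking the union over $n$ of countable $n$-ota refinements of a given open $\iota$-cover), which the paper leaves implicit.
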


\begin{proof}
Suppose that $X^{2n} \setminus \Delta_{2n}$ is Lindel\"of. We prove that $X$ is $n$-ota. Indeed, let $\mathcal{U}$ be a $n$-ota cover without a countable $n$-ota refinement.  Let $\mathcal{V}$ be a regular $n$-ota refinement of $\mathcal{U}$ having minimal size $\kappa \geq \omega_1$. Enumerate $\mathcal{V}$ as $\{V_\alpha: \alpha < \aleph_1 \}$ and let $\mathcal{V}_\alpha=\{V_\beta: \beta \leq \alpha \}$ and $$A(\mathcal{V_\alpha})=\{(x_1, \dots x_{2n}) \in X^{2n} \setminus \Delta_{2n}: (\forall U \in \mathcal{V}_\alpha)(\{x_1, \dots x_n \} \subseteq U \wedge \{x_{n+1}, \dots , x_{2n}\} \cap \overline{U}\neq \emptyset)$$
$$ \vee(\{x_1, \dots, x_n\} \cap \overline{U} \neq \emptyset \wedge \{x_{n+1}, \dots x_{2n}\} \subseteq U) \vee (\{x_1, \dots, x_n \} \nsubseteq U \wedge \{x_{n+1}, \dots x_{2n}\} \nsubseteq U) \} $$ 
\vspace{.1in}

\noindent {\bf Claim.} $A(\mathcal{V}_\alpha)$ is closed.

\begin{proof}[Proof of Claim]
Let $(x_1, \dots, x_{2n}) \notin A(\mathcal{V}_\alpha) \cup \Delta_{2n}$. Then we can find sets $U_1$ and $U_2$ which are open in $x$ and such that $\{x_1, \dots, x_n\} \subset U_1$, $\{x_{n+1}, \dots x_{2n}\} \cap \overline{U_1}=\emptyset$, $\{x_{n+1}, \dots, x_{2n}\} \subset U_2$ and $\{x_1, \dots, x_n \} \cap \overline{U_2}=\emptyset$. Then $(U_1^n \setminus \overline{U_2^n}) \times (U_2^n \setminus \overline{U_1^n})$ is an open neighbourhood of $(x_1, \dots x_n, x_{n+1}, \dots, x_{2n})$ which misses $A(\mathcal{V}_\alpha)$. 
\end{proof}
So $\{A(\mathcal{V}_\alpha): \alpha < \kappa \}$ is an uncountable decreasing chain of closed sets in $X^{2n} \setminus \Delta_{2n}$, and hence it has non-empty intersection. This contradicts that $\mathcal{V}$ is a regular $n$-ota cover. So if $X^i \setminus \Delta_i$ is Lindel\"of for every $i<\omega$ then $X$ is $i$-ota for every $i<\omega$ and hence an $\iota$-space.

\end{proof}

\begin{corollary}
Every $\epsilon$-space with a $G_\delta$ diagonal is an $\iota$-space.
\end{corollary}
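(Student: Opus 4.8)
The plan is to derive the Corollary directly from Theorem~\ref{powersthm}. That theorem says: if $X$ is regular and $X^{2n}\setminus\Delta_{2n}$ is Lindel\"of for every $n\in\omega$, then $X$ is an $\iota$-space. So the whole task reduces to verifying its hypotheses for a regular $\epsilon$-space $X$ with a $G_\delta$ diagonal. Regularity is given. Since $X$ is an $\epsilon$-space, every finite power $X^m$ is Lindel\"of. Thus the only thing to check is that $\Delta_{2n}$ is a $G_\delta$ subset of $X^{2n}$: once we know that, $X^{2n}\setminus\Delta_{2n}$ is an $F_\sigma$ subset of the Lindel\"of space $X^{2n}$, hence Lindel\"of (a countable union of closed subsets of a Lindel\"of space is Lindel\"of), and Theorem~\ref{powersthm} applies.

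First I would recall that $\Delta_{2n}=\{(x_1,\dots,x_{2n})\in X^{2n}: |\{x_1,\dots,x_{2n}\}|<2n\}=\bigcup_{i<j\le 2n}\{(x_1,\dots,x_{2n}): x_i=x_j\}$, a finite union. So it suffices to show that for each pair $i<j$ the set $\{x\in X^{2n}: x_i=x_j\}$ is $G_\delta$ in $X^{2n}$; equivalently, since this set is the preimage of the diagonal $\Delta\subseteq X^2$ under the continuous projection $\pi_{ij}:X^{2n}\to X^2$ onto the $i$-th and $j$-th coordinates, it suffices to show $\Delta$ is $G_\delta$ in $X^2$. But that is exactly the hypothesis that $X$ has a $G_\delta$ diagonal: write $\Delta=\bigcap_{k<\omega}W_k$ with each $W_k$ open in $X^2$, and then $\{x: x_i=x_j\}=\bigcap_{k<\omega}\pi_{ij}^{-1}(W_k)$ is a countable intersection of open subsets of $X^{2n}$. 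Intersecting over the finitely many pairs $i<j$ shows $\Delta_{2n}$ is $G_\delta$, as required.

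I do not expect any genuine obstacle here; the corollary is a short bookkeeping argument combining the $\epsilon$-space assumption (to get Lindel\"ofness of all finite powers), the $G_\delta$-diagonal assumption (to get that $\Delta_{2n}$ is $G_\delta$, hence its complement is $F_\sigma$), and the standard fact that $F_\sigma$ subsets of Lindel\"of spaces are Lindel\"of. The only point that merits a sentence of care is the passage from ``$\Delta$ is $G_\delta$ in $X^2$'' to ``$\Delta_{2n}$ is $G_\delta$ in $X^{2n}$,'' which is handled by pulling back along coordinate projections as above. With the hypotheses of Theorem~\ref{powersthm} thus verified for every $n$, we conclude that $X$ is an $\iota$-space.
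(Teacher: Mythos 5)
Your proposal is correct and follows exactly the paper's own route: write $\Delta_{2n}$ as a finite union of $G_\delta$ sets (hence $G_\delta$), conclude that $X^{2n}\setminus\Delta_{2n}$ is an $F_\sigma$ subset of the Lindel\"of space $X^{2n}$ and therefore Lindel\"of, and invoke Theorem~\ref{powersthm}. You merely spell out the pullback-along-projections step that the paper leaves implicit.
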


\begin{proof}
Suppose $X$ has a $G_\delta$ diagonal. Then $\Delta_n$ is a finite union of $G_\delta$ sets, and thus $G_\delta$. It follows that $X^n \setminus \Delta_n$ is a countable union of Lindel\"of spaces, and thus Lindel\"of. So $X$ is an $\iota$-space by Theorem \ref{powersthm}.
\end{proof}

\begin{corollary}
If $X^{2n} \setminus \Delta_{2n}$ is Lindel\"of for some $n$, then $X$ is Lindel\"of.
\end{corollary}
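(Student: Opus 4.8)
The plan is to realize a cofinite subspace of $X$ as a closed subspace of $X^{2n}\setminus\Delta_{2n}$, and then recover Lindel\"ofness of $X$ from the two elementary facts that closed subspaces of Lindel\"of spaces are Lindel\"of and that removing (equivalently, adding back) finitely many points preserves Lindel\"ofness.

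First I would dispose of the trivial case $|X| < 2n$, in which $X$ is finite and hence Lindel\"of. So assume $X$ has at least $2n$ points and fix pairwise distinct points $p_1, \dots, p_{2n-1} \in X$. Using that $X$ is $T_1$ (which holds since $X$ is regular), each singleton $\{p_i\}$ is closed, so the ``slice''
$$Y := \{p_1\} \times \dots \times \{p_{2n-1}\} \times \bigl(X \setminus \{p_1, \dots, p_{2n-1}\}\bigr)$$
coincides with $\bigl(\{p_1\}\times\dots\times\{p_{2n-1}\}\times X\bigr) \cap \bigl(X^{2n}\setminus\Delta_{2n}\bigr)$: a tuple $(p_1,\dots,p_{2n-1},x)$ lies outside $\Delta_{2n}$ precisely when $x$ differs from each $p_i$, the $p_i$ being already distinct among themselves. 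Hence $Y$ is closed in $X^{2n}\setminus\Delta_{2n}$, and therefore Lindel\"of.

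Next, the projection of $X^{2n}$ onto its last coordinate restricts to a homeomorphism of $Y$ onto $X\setminus\{p_1,\dots,p_{2n-1}\}$ (its inverse being $x \mapsto (p_1,\dots,p_{2n-1},x)$), so $X\setminus\{p_1,\dots,p_{2n-1}\}$ is Lindel\"of. Finally, given an open cover $\mathcal{U}$ of $X$, choose $U_i\in\mathcal{U}$ with $p_i\in U_i$ for each $i$, take a countable subfamily of $\mathcal{U}$ covering the Lindel\"of subspace $X\setminus\{p_1,\dots,p_{2n-1}\}$, and adjoin the finitely many sets $U_i$ to obtain a countable subcover of $X$; thus $X$ is Lindel\"of.

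This is a short argument with no genuine obstacle; the only point requiring care is that the last coordinate of a tuple in $Y$ must be barred from \emph{all} of $p_1,\dots,p_{2n-1}$ --- which is exactly the condition cutting out $X^{2n}\setminus\Delta_{2n}$ on this slice and is what, together with $T_1$-ness, makes $Y$ closed there --- plus the routine observation that finitely many points can be added to a Lindel\"of space without destroying Lindel\"ofness.
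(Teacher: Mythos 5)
Your proof is correct, but it takes a genuinely different route from the paper. The paper's argument is internal to the machinery of the section: it observes that the proof of Theorem \ref{powersthm} already shows $X$ is an $n$-ota space, and then checks that every $n$-ota space is Lindel\"of (given an open cover $\mathcal{U}$, pass to the family of $n$-fold unions from $\mathcal{U}$, take an $n$-ota refinement and then a countable refinement, and pull back). That route has the side benefit of recording the intermediate fact that $X$ is $n$-ota, which is the currency of the paper, but it inherits the regularity hypothesis of Theorem \ref{powersthm}. Your argument bypasses all of the covering machinery: you realize $X\setminus\{p_1,\dots,p_{2n-1}\}$ as the closed slice $\{p_1\}\times\dots\times\{p_{2n-1}\}\times\bigl(X\setminus\{p_1,\dots,p_{2n-1}\}\bigr)$ of $X^{2n}\setminus\Delta_{2n}$, use that closed subspaces of Lindel\"of spaces are Lindel\"of, and add back the finitely many deleted points. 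This is more elementary and needs only $T_1$ (so that the slice is closed) rather than regularity; it also makes transparent exactly where the hypothesis on $X^{2n}\setminus\Delta_{2n}$ is used. Both proofs are sound; yours is shorter and more self-contained, while the paper's fits the result into its $n$-ota framework.
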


\begin{proof}
By the proof of Theorem $\ref{powersthm}$, $X$ is an $n$-ota space. But every $n$-ota space is Lindel\"of. Indeed, let $\mathcal{U}$ be an open cover for $X$. Let $\mathcal{V}$ be the set of all $n$-sized unions from $\mathcal{U}$. Let $\mathcal{G}$ be an $n$-ota refinement of $\mathcal{V}$ and $\mathcal{F}$ be a countable refinement of $\mathcal{G}$. Then $\mathcal{F}$ naturally induces a countable refinement of the original cover $\mathcal{U}$.
\end{proof}

Recall that a space $X$ is a \emph{Lindel\"of $\Sigma$-space} if it has a cover $\mathcal{C}$ by compact sets, and a countable family $\mathcal{N}$ of closed subsets of $X$ which is a \emph{network modulo $\mathcal{C}$}, that is, for every $C \in \mathcal{C}$ and every open set $U$ such that $C \subset U$ there is $N \in \mathcal{N}$ such that $C \subset N \subset U$. We will use this notion to provide an instance of when being an $\iota$-space and having a countable network are equivalent. The proof of the following theorem is similar to the proof that every Lindel\"of $\Sigma$-space is stable (see, for example, \cite{Tk}).

\begin{theorem}
Let $X$ be a regular Lindel\"of $\Sigma$-space. If $X$ is an $\iota$-space then $X$ has a countable network.
\end{theorem}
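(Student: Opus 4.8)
The plan is to combine the Lindel\"of $\Sigma$ structure with the $\iota$-space property to manufacture a countable network. Let $\mathcal{C}$ be a cover of $X$ by compact sets and $\mathcal{N}$ a countable family of closed sets that is a network modulo $\mathcal{C}$; without loss of generality close $\mathcal{N}$ under finite intersections and finite unions. The key point is that each $C \in \mathcal{C}$ is compact, hence (being a closed subspace of the regular $\epsilon$-space $X$, by Proposition~\ref{subspaceprop}\eqref{noniotaclosed}) a compact $\iota$-space, and a compact $\iota$-space with the $G_\delta$-diagonal behaviour coming from 1-ota is metrizable — but more relevantly, I want to extract a countable ``external'' base for each $C$ built from members of $\mathcal{N}$. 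So first I would show: for every $C \in \mathcal{C}$ there is a countable subfamily $\mathcal{N}_C \subseteq \mathcal{N}$ such that for every open $U \supseteq C$ there is $N \in \mathcal{N}_C$ with $C \subseteq \operatorname{int}(N) \subseteq N \subseteq U$. Since $\mathcal{N}$ is itself countable this is immediate once we note that regularity lets us interpolate an open set, and the network-modulo-$\mathcal{C}$ property gives the $N$; so in fact $\mathcal{N}_C$ can be taken to be all of $\mathcal{N}$, and the content is that $\{\operatorname{int}(N) : N \in \mathcal{N}\}$ together with $\mathcal{N}$ captures neighbourhoods of compact sets well.

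Next I would use the $\iota$-space hypothesis to tame the compact sets themselves. Fix $C \in \mathcal{C}$. For each pair of points in $C$ the collection of open sets separating them, fattened appropriately, forms (the trace of) an open $\iota$-cover of $C$; since $C$ is a closed subspace of the $\iota$-space $X$ it is an $\iota$-space, so this has a countable $\iota$-refinement, and as in the proof that countably compact 1-ota spaces are metrizable, this yields a countable base for $C$ and hence $C$ is metrizable and second countable. Combined with the previous paragraph, each $C \in \mathcal{C}$ has a countable network consisting of members of $\mathcal{N}$ (intersect a countable base of $C$ with suitable $N$'s, or more directly observe that a second-countable compactum sitting inside $X$ with $\mathcal{N}$ a network-mod-$\mathcal{C}$ has $\{N \cap C : N \in \mathcal{N}\}$, suitably refined, as a network for $C$). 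The upshot I am after: there is a single countable family $\mathcal{M}$ of subsets of $X$ — built from $\mathcal{N}$ and the finitely many basic neighbourhoods needed — such that for every $C \in \mathcal{C}$, every $x \in C$, and every open $U \ni x$, some $M \in \mathcal{M}$ satisfies $x \in M \subseteq U$.

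Finally I would assemble the global network. Let $\mathcal{M}$ be the countable family from the previous step (close it under finite unions). Claim: $\mathcal{M}$ is a network for $X$. Given $x \in X$ and open $U \ni x$, pick $C \in \mathcal{C}$ with $x \in C$; by regularity shrink to $V$ with $x \in V \subseteq \overline{V} \subseteq U$; now use the network-modulo-$\mathcal{C}$ property on $C$ and the open set $(X \setminus \overline{V}) \cup (\text{an open set containing } x)$ — more precisely apply the second-countability of $C$ to get a basic open set $W$ of $C$ with $x \in W$ and $\overline{W}^C \subseteq V \cap C$, lift $W$ to a member of $\mathcal{M}$ contained in $U$. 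The bookkeeping to make ``built from $\mathcal{N}$'' give genuinely countably many sets total, rather than countably many per compact set, is the main obstacle: the number of compact sets in $\mathcal{C}$ can be large, so one must argue that only countably many distinct members of $\mathcal{M}$ ever arise, which is exactly where $\mathcal{N}$ being a fixed countable family (not depending on $C$) is used, together with the fact that a second-countable compact metric space has a network drawn from any network-mod-$\mathcal{C}$ in a uniform way. I expect the technical heart to be verifying this uniformity — showing that the countably many sets $\{N \in \mathcal{N}\}$, possibly intersected with countably many fixed basic open sets of $X$ arising from a countable ``core'', genuinely network every point of every $C \in \mathcal{C}$ — which is the analogue of the standard argument that Lindel\"of $\Sigma$-spaces are stable.
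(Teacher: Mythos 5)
There is a genuine gap. The ``upshot'' you state at the end of your second paragraph --- a single countable family $\mathcal{M}$ such that for every $C\in\mathcal{C}$, every $x\in C$ and every open $U\ni x$ some $M\in\mathcal{M}$ satisfies $x\in M\subseteq U$ --- is, since $\mathcal{C}$ covers $X$, literally the conclusion of the theorem, and your final paragraph concedes that the uniformity needed to obtain it is unverified. The per-compactum facts you do establish cannot supply it: metrizability of each $C$ (which does follow from $C$ being a closed, hence compact, subspace of an $\iota$-space) gives a countable base of \emph{relatively} open subsets of $C$, and these are not drawn from any fixed countable family as $C$ ranges over the possibly uncountable $\mathcal{C}$. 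Moreover, two of your intermediate claims fail: being a network modulo $\mathcal{C}$ controls only neighbourhoods of the whole compact set $C$, not of its individual points, so $\{N\cap C:N\in\mathcal{N}\}$ need not be (or refine to) a network for $C$ (take $\mathcal{C}=\{X\}$ and $\mathcal{N}=\{X\}$ for any nontrivial compact $X$); and regularity does not upgrade $C\subseteq N\subseteq U$ to $C\subseteq\operatorname{int}(N)$.

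The missing idea is to apply the $\iota_w$ half of the hypothesis to $X$ itself rather than to each $C$ separately. Since $X$ is an $\iota$-space it has a countable open $\iota$-cover, which by Lemma \ref{lemregular} refines to a countable \emph{regular} $1$-ota cover $\mathcal{U}$; put $\mathcal{G}=\{\overline{U}:U\in\mathcal{U}\}$. This single countable family separates points uniformly over all of $X$: for $x\neq y$ there is $G\in\mathcal{G}$ with $x\in G$ and $y\notin G$. Now given $x\in C\in\mathcal{C}$ and open $U\ni x$, either $C\subseteq U$ and $\mathcal{N}$ alone suffices, or $K=C\setminus U$ is compact nonempty; choosing for each $y\in K$ such a $G_y$ (so $x\in G_y$, $y\notin G_y$), compactness yields a finite $S\subseteq K$ with $\bigcap_{y\in S}G_y\cap K=\emptyset$, hence $C\subseteq X\setminus\bigl(\bigcap_{y\in S}G_y\setminus U\bigr)$, and the network-modulo-$\mathcal{C}$ property gives $N\in\mathcal{N}$ with $C\subseteq N\subseteq X\setminus\bigl(\bigcap_{y\in S}G_y\setminus U\bigr)$, whence $x\in N\cap\bigcap_{y\in S}G_y\subseteq U$. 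Thus $\bigl\{\bigcap\mathcal{F}:\mathcal{F}\in[\mathcal{G}\cup\mathcal{N}]^{<\omega}\bigr\}$ is a countable network. This is exactly the uniformity your sketch identifies as the obstacle, and it is where the $\iota$-space hypothesis (as opposed to mere metrizability of the compacta in $\mathcal{C}$) actually enters.
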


\begin{proof}
Let $\mathcal{C}$ be a cover of $X$ consisting of compact sets and $\mathcal{N}$ be a countable family which is a countable network for $X$ modulo $\mathcal{C}$. Since $X$ is regular, we can use Lemma $\ref{lemregular}$ to fix a countable regular $\iota$-cover $\mathcal{U}$ for $X$. Let $\mathcal{G}=\{\overline{U}: U \in \mathcal{U} \}$. We claim that the following family is a countable network for $X$:

$$\mathcal{B}=\left \{\bigcap \mathcal{F}: \mathcal{F} \in [\mathcal{G} \cup \mathcal{U}]^{<\omega} \right \}.$$

To see that, let $x \in X$ and $U$ be an open neighbourhood of $x$. Since $\mathcal{C}$ covers $X$, there is a $C \in \mathcal{C}$ such that $x \in C$. If $C \subset U$ then we can find $N \in \mathcal{N}$ such that $x \in C \subset N \subset U$ and we are done. Otherwise,  the set $K=C \setminus U$ is compact non-empty. For every $y \in K$ choose a set $G_y \in \mathcal{G}$ such that $y \in G$ but $x \notin G$. Then $\{G_y \cap K: y \in K \}$ has empty intersection, and hence, by compactness of $K$ there is a finite set $S \subset K$ such that $\bigcap_{y \in S} G_y \cap K = \emptyset$. Therefore, $\bigcap_{y \in  S} G_y \cap C \setminus U = \emptyset$, and hence $C \subset X \setminus (\bigcap_{y \in S} G_y \setminus U)$. But then there is an $N \in \mathcal{N}$ such that $C \subset N \subset X \setminus (\bigcap_{y \in S}  G_y \setminus U)$. Therefore $N \cap \bigcap_{y \in S} G_y \setminus U=\emptyset$ and hence $x \in N \cap \bigcap_{y \in S} G_y \subset U$, which is what we wanted, since $N \cap \bigcap_{y \in S} G_y \in \mathcal{B}$.

\end{proof}

\begin{question}
Is there a Lindel\"of Hausdorff $\Sigma$-space without a countable network which is an $\iota$-space?
\end{question}

Note that a Lindel\"of $\Sigma$-space which is an $\iota_w$-space is also an $\iota$-space, since countable products of Lindel\"of $\Sigma$-spaces are Lindel\"of $\Sigma$.

\section{L-spaces}
Since every $\iota $-space is an $\epsilon $-space and $\epsilon $-spaces are characterized by having all finite powers Lindel\" of, L-spaces are interesting spaces for us to consider.  It is conjectured that even in ZFC there is an L-space that is not even an $\epsilon $-space.  That is, of course, it is conjectured that Justin Moore's $L$-space has a finite power which is not Lindel\"of in ZFC.  Certainly this is consistently known.

\begin{proposition}\label{Lspace}
Consistently, every hereditarily Lindel\"of $\iota$-space is separable.
\end{proposition}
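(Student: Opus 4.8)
The plan is to reduce the proposition to a known consistency statement about $L$-spaces, feeding in the single extra property that an $\iota$-space carries for free. Recall that an \emph{$L$-space} is (by convention) a regular hereditarily Lindel\"of space which is not separable, so the proposition is exactly the assertion that, under a suitable set-theoretic hypothesis, no $L$-space is an $\iota$-space. The decisive (and essentially trivial) observation is that every $\iota$-space is an $\epsilon$-space — immediate from Theorem~\ref{epsilon+weakiota} — and hence has all of its finite powers Lindel\"of. Consequently a non-separable regular hereditarily Lindel\"of $\iota$-space would be an $L$-space all of whose finite powers are Lindel\"of, i.e. an $L$-space which is simultaneously an $\epsilon$-space.

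It therefore suffices to work in a model of ZFC in which no $L$-space is an $\epsilon$-space, equivalently in which every $L$-space has some finite power that fails to be Lindel\"of. This is precisely the situation alluded to in the introduction, where it is recorded that it is consistent that Moore's $L$-space — and, more to the point, every $L$-space — is not an $\epsilon$-space; the consistency of this statement is what we invoke. In any such model the argument closes at once: if $X$ is a regular hereditarily Lindel\"of $\iota$-space, then $X$ is an $\epsilon$-space, so $X$ cannot be an $L$-space, and hence $X$ is separable. The remaining implications — ``$\iota$-space $\Rightarrow$ $\epsilon$-space $\Rightarrow$ all finite powers Lindel\"of'' and ``regular $+$ hereditarily Lindel\"of $+$ not separable $=$ $L$-space'' — are just unwinding definitions. (It is also worth noting that such an $X$ is automatically perfectly normal and of size at most $\mathfrak{c}$ by Proposition~\ref{bound}, which may be convenient in the combinatorial part below, although the reduction itself uses only the $\epsilon$-space property.)

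The only genuine content, and hence the main obstacle, is the set-theoretic input: one must either cite the relevant theorem or carry out the forcing construction, together with its accompanying combinatorial (e.g.\ $\Delta$-system / partition) argument, that produces a model containing no $\epsilon$-space $L$-space. I expect this step to be completely insulated from the $\iota$-space machinery developed in the rest of the paper — it is a pure statement about $L$-spaces and Lindel\"of powers — so the real contribution is the clean reduction above rather than new forcing work; if a suitable citation is available, the proof is short, and otherwise the forcing/combinatorics is where all the effort goes.
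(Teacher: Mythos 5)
Your reduction is exactly the paper's proof: the paper works under $MA_{\omega_1}$, under which every $L$-space has a finite power that is not Lindel\"of, and then simply observes that every $\iota$-space is an $\epsilon$-space, so a non-separable hereditarily Lindel\"of $\iota$-space would be an $L$-space with all finite powers Lindel\"of. The only piece you left unspecified --- the precise set-theoretic hypothesis --- is settled by citing $MA_{\omega_1}$ and the known result about $L$-spaces under it, so no new forcing work is required.
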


\begin{proof}
Under $MA_{\omega_1}$, every $L$-space has a finite power which is not Lindel\"of. Now, every $\iota$-space is an $\epsilon$-space.
\end{proof}

To investigate the consistency of the negation of this statement, we focus on more general classes of spaces that yield L-spaces and provide many counterexamples in topology.  These spaces are subspaces of products of the form $2^I$, where $I$ is a set of ordinals.  So, the following notation is useful.

\begin{notation}
\mbox{}
\begin{itemize}
\item $Fn(I,2)$ is the set of finite partial functions from $I$ into $2$.
\item For $\varepsilon \in Fn(I,2)$, $[\varepsilon ]=\{f\in 2^I:\varepsilon \subseteq f\}$ denotes the basic clopen set determined by $\varepsilon $.
\item If $b\in [I]^{<\omega }$ such that $b=\{\beta _i:i\in n=\vert b\vert \}$ and $\varepsilon \in 2^n$ then $\varepsilon *b$ denotes the element of $Fn(I,2)$ which has $b$ as its domain and satisfies $\varepsilon *b(\beta _i)=\varepsilon (i)$, $\forall i\in n$.
\item For any cardinal $\mu $ and $r\in \omega $ we denote by ${\mathcal D}_\mu ^r(I)$ the collection of all sets $B\in [[I]^r]^\mu $ such that the members of $B$ are pairwise disjoint.  We write ${\mathcal D}_\mu (I)=\bigcup \{{\mathcal D}_\mu ^r(I):r\in \omega \}$ and if $B\in {\mathcal D}_\mu (I)$ then $n(B)=\vert b\vert $ for any $b\in B$.
\end{itemize}
\end{notation}

\begin{definition}
If $B\in {\mathcal D}_\mu (I)$ and $\varepsilon \in 2^{n(B)}$ then $[\varepsilon ,B]=\bigcup \{[\varepsilon *b]:b\in B\}$ is called a ${\mathcal D}_\mu $-set in $2^I$.
\end{definition}

\begin{definition}
$X\subseteq 2^\lambda $ with $\vert X\vert >\omega $ is an HFC space if for every $B\in {\mathcal D}_\omega (\lambda )$ and $\varepsilon \in 2^{n(B)}$, $\vert X\setminus [\varepsilon ,B]\vert \leq \omega $.  That is, every ${\mathcal D}_\omega $-set in $2^\lambda $ finally covers $X$. 
\end{definition}

\begin{definition}
For any $k\in \omega $, a map $F:\kappa \times \lambda \rightarrow 2$ with $\kappa \geq \omega _1$ and $\lambda \geq \omega $ $(\lambda \geq \omega _1)$ is called an HFC$^k$ (HFC$_w^k$) matrix if for every $A\in {\mathcal D}_{\omega _1}^k(\kappa )$ and $B\in {\mathcal D}_{\omega }(\lambda )$ $(B\in {\mathcal D}_{\omega _1}(\lambda ))$ and for any $\varepsilon _0,\dots ,\varepsilon _{k-1}\in 2^{n(B)}$ there exists $b\in B$ such that $\vert \{a\in A: \forall i\in k(f_{\alpha _i}\supseteq \varepsilon _i*b)\}\vert =\omega _1$, where $\{\alpha _i:i\in k\}$ is the increasing enumeration of the elements of $a$.

$F$ is a strong HFC (HFC$_w$) matrix  if it is HFC$^k$ (HFC$_w^k$) for all $k\in \omega $.

$X\subseteq 2^\lambda $ is a strong HFC (HFC$_w$) space if it is represented by a strong HFC (HFC$_w$) matrix, $F$.  That is $X=\{f_\alpha :\alpha <\kappa \}$ where $f_\alpha (\gamma )=F(\alpha ,\gamma )$, $\forall \gamma <\lambda $.
\end{definition}
\begin{theorem}\label{strongHFChL}\cite{J}
If $X$ is a strong HFC$_w$ space (hence strong HFC) then $X^k$ is hereditarily Lindel\" of, $\forall k\in \omega $.
\end{theorem}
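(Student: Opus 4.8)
The statement to prove is Theorem \ref{strongHFChL}: if $X \subseteq 2^\lambda$ is a strong HFC$_w$ space, then $X^k$ is hereditarily Lindel\"of for every $k \in \omega$. Since $X^k$ is hereditarily Lindel\"of iff it has no uncountable discrete subspace (in the presence of the relevant separation axioms here, since $2^\lambda$ is hereditarily normal—actually one uses: a space with all open sets $F_\sigma$-... more precisely, right-separated subspaces), the plan is to show that $X^k$ has no uncountable right-separated subspace, equivalently no uncountable \emph{left}-separated-in-the-reverse-order sequence witnessing non-hereditary-Lindel\"ofness. Concretely: suppose toward a contradiction that $\{\bar{x}_\alpha : \alpha < \omega_1\} \subseteq X^k$ is right-separated, so there are basic open boxes $W_\alpha = \prod_{j<k}[\sigma^\alpha_j]$ (with $\sigma^\alpha_j \in Fn(\lambda,2)$) such that $\bar{x}_\alpha \in W_\alpha$ but $\bar{x}_\beta \notin W_\alpha$ for all $\beta < \alpha$. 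Each $\bar{x}_\alpha = (f_{\gamma^\alpha_0}, \dots, f_{\gamma^\alpha_{k-1}})$ for some $\gamma^\alpha_0 < \dots < \gamma^\alpha_{k-1} < \kappa$, so the index sets $a_\alpha = \{\gamma^\alpha_0,\dots,\gamma^\alpha_{k-1}\} \in [\kappa]^k$.

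First I would apply a $\Delta$-system / uniformization argument to the $\omega_1$-many finite configurations. Passing to an uncountable subset, I may assume: the supports $\operatorname{dom}(\sigma^\alpha_j)$ form a $\Delta$-system in $j$-by-$j$ fashion with common root, the $\sigma^\alpha_j$ agree on the root, the "free parts" $b_\alpha := \bigcup_j \operatorname{dom}(\sigma^\alpha_j) \setminus (\text{root})$ are pairwise disjoint and of constant size $r$ with a fixed order-isomorphism pattern, and the restriction of $\sigma^\alpha_j$ to the free part is a fixed $\varepsilon_j \in 2^r$ read off via the canonical enumeration—so $W_\alpha$ restricted to coordinates in $b_\alpha$ looks like $[\varepsilon_j * b_\alpha^{(j)}]$ for appropriate sub-blocks. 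Simultaneously I thin so that $\{a_\alpha : \alpha < \omega_1\} \in {\mathcal D}_{\omega_1}^k(\kappa)$ (pairwise disjoint; the root of the $a_\alpha$'s, if nonempty, can be absorbed into the already-fixed common behavior since $\bar x_\beta \notin W_\alpha$ only needs to be violated on the free coordinates). The point of all this bookkeeping is to arrange that the HFC$_w^k$ property of the matrix $F$ representing $X$ becomes directly applicable to $A = \{a_\alpha\}$ and $B = \{b_\alpha\}$.

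The heart of the argument: by the strong HFC$_w$ hypothesis, $F$ is HFC$_w^k$, so applied to $A \in {\mathcal D}_{\omega_1}^k(\kappa)$, $B \in {\mathcal D}_{\omega_1}(\lambda)$ (note we get $B$ uncountable, which is why HFC$_w$ rather than plain HFC is needed to control $k$-fold products over an uncountable index set) and the tuple $\varepsilon_0, \dots, \varepsilon_{k-1} \in 2^{n(B)}$, there is $b = b_\alpha \in B$ such that $\{a \in A : \forall i<k\, (f_{(a)_i} \supseteq \varepsilon_i * b)\}$ has size $\omega_1$. In particular there is some $\beta < \alpha$ with $a_\beta$ in this set, meaning $f_{\gamma^\beta_i} \supseteq \varepsilon_i * b_\alpha$ for all $i$. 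Combined with the fact that $\bar x_\beta$ and $\bar x_\alpha$ agree on the root coordinates of $W_\alpha$ (arranged in the thinning step), this forces $\bar x_\beta \in W_\alpha$, contradicting right-separation. Hence no uncountable right-separated subspace exists, so $X^k$ is hereditarily Lindel\"of.

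The main obstacle is the combinatorial thinning in the second paragraph: one must simultaneously tame the supports of all $k$ box-coordinates (which may overlap each other and may overlap the indices $a_\alpha$), ensure the "root" contributions are genuinely harmless (i.e. that membership $\bar x_\beta \in W_\alpha$ is decided entirely on the disjoint free blocks), and match up the canonical enumeration conventions in the definitions of $\varepsilon * b$, $n(B)$, and the HFC$_w^k$ matrix so that the quantifier "$f_{\alpha_i} \supseteq \varepsilon_i * b$" says exactly what is needed about the box $W_\alpha$. This is where the argument of \cite{J} does its real work; the rest is a standard reduction of hereditary Lindel\"ofness of a product to a statement about uncountable free sequences of basic boxes. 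I would also remark that the $k=1$ case recovers the classical fact that strong HFC spaces are hereditarily Lindel\"of, and the general $k$ case is what licenses treating these spaces as candidate $\epsilon$-spaces (indeed $\iota$-spaces) in the later sections.
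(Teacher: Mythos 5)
The paper does not actually prove this theorem: it is stated with the citation \cite{J} and used as a black box, so there is no in-paper argument to compare yours against. Your sketch is essentially the standard Juh\'asz argument, and its overall architecture --- reduce hereditary Lindel\"ofness of $X^k$ to the non-existence of an uncountable separated sequence of basic boxes, tame the boxes by a $\Delta$-system/uniformization so that each box is determined by a fixed root behaviour plus a fixed pattern $\varepsilon_i$ on a free block $b_\alpha$, make $\{a_\alpha\}$ pairwise disjoint, and then invoke the HFC$_w^k$ property of the matrix on $A=\{a_\alpha\}$, $B=\{b_\alpha\}$ --- is the right one, including the correct observation that the root constraints are automatically satisfied by every member of the thinned family.

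There is, however, one genuine defect in the final step. You set up the sequence with $\bar{x}_\alpha\in W_\alpha$ and $\bar{x}_\beta\notin W_\alpha$ for all $\beta<\alpha$; that is a \emph{left}-separated sequence, which witnesses failure of hereditary \emph{separability}, not of hereditary Lindel\"ofness. Worse, with that convention your concluding sentence ``the set of good $a$'s is uncountable, so in particular there is some $\beta<\alpha$ in it'' is false: an uncountable subset of $\omega_1$ need not meet the initial segment below the $\alpha_0$ for which $b=b_{\alpha_0}$ was produced (it could be contained in $[\alpha_0,\omega_1)$), and the HFC$_w^k$ property only hands you \emph{some} $b\in B$, not one of your choosing. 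The repair is to use the correct characterization: $X^k$ fails to be hereditarily Lindel\"of iff there is an uncountable \emph{right}-separated sequence, i.e.\ $\bar{x}_\beta\notin W_\alpha$ for all $\beta>\alpha$. Then the uncountable set of good indices certainly contains some $\beta>\alpha_0$, and $\bar{x}_\beta\in W_{\alpha_0}$ gives the contradiction. With that one reversal, together with the routine reduction (by induction on $k$) to tuples with pairwise distinct coordinates and disjoint index sets, your outline is a correct proof scheme.
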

\begin{corollary}\label{strongHFCiota}
Every strong HFC is an $\iota $-space.  In fact, if $X^n$ is hereditarily Lindel\" of, $\forall n\in \omega $ 
then $X$ is an $\iota $-space.
\end{corollary}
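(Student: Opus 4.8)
The plan is to deduce Corollary~\ref{strongHFCiota} directly from Theorem~\ref{powersthm} together with Theorem~\ref{strongHFChL}. First I would observe that if $X^n$ is hereditarily Lindel\"of for every $n \in \omega$, then in particular every subspace of every finite power of $X$ is Lindel\"of; applying this to the subspace $X^{2n} \setminus \Delta_{2n}$ of $X^{2n}$ shows that $X^{2n} \setminus \Delta_{2n}$ is Lindel\"of for every $n$. Moreover, a space all of whose finite powers are hereditarily Lindel\"of is certainly regular once we know it is $T_2$ (here $X \subseteq 2^\lambda$, so $X$ is even zero-dimensional Hausdorff, hence regular), so the hypotheses of Theorem~\ref{powersthm} are met and we conclude that $X$ is an $\iota$-space. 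This handles the ``in fact'' clause; the first sentence then follows because, by Theorem~\ref{strongHFChL}, every strong HFC space $X$ satisfies exactly the hypothesis that $X^n$ is hereditarily Lindel\"of for all $n \in \omega$ (a strong HFC space is in particular a strong HFC$_w$ space only when $\lambda \geq \omega_1$, but the statement of Theorem~\ref{strongHFChL} already records that strong HFC suffices).

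The key steps, in order, are: (1) note $X$ is regular, being (homeomorphic to) a subspace of $2^\lambda$; (2) from ``$X^n$ hereditarily Lindel\"of for all $n$'' extract ``$X^{2n} \setminus \Delta_{2n}$ Lindel\"of for all $n$,'' using that an arbitrary subspace of a hereditarily Lindel\"of space is Lindel\"of; (3) invoke Theorem~\ref{powersthm} to conclude $X$ is an $\iota$-space; (4) for the first assertion, combine this with Theorem~\ref{strongHFChL} to see that a strong HFC space satisfies the hypothesis and is therefore an $\iota$-space.

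I do not expect any genuine obstacle here: the corollary is essentially a repackaging of Theorem~\ref{powersthm} specialized to spaces with all finite powers hereditarily Lindel\"of, plus a citation. The only point requiring a line of care is step~(2)---making sure that hereditary Lindel\"ofness of $X^{2n}$ is exactly what licenses us to call the non-diagonal part $X^{2n}\setminus \Delta_{2n}$ Lindel\"of---and step~(1), recording regularity, which is immediate from the ambient product $2^\lambda$. So the proof should read essentially: ``A strong HFC space $X$ is regular, and by Theorem~\ref{strongHFChL} all of its finite powers are hereditarily Lindel\"of. Hence for every $n \in \omega$ the subspace $X^{2n} \setminus \Delta_{2n}$ of $X^{2n}$ is Lindel\"of, so $X$ is an $\iota$-space by Theorem~\ref{powersthm}. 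The second statement follows by the same argument, using only that $X^n$ is hereditarily Lindel\"of for every $n$.''
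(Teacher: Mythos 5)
Your proposal is correct and matches the paper's approach exactly: the paper's entire proof is ``Follows from Theorem \ref{powersthm}'', and the details you supply (regularity of $X$ as a subspace of $2^\lambda$, Lindel\"ofness of $X^{2n}\setminus\Delta_{2n}$ from hereditary Lindel\"ofness of $X^{2n}$, and Theorem \ref{strongHFChL} to cover the strong HFC case) are precisely the intended ones.
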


\begin{proof}
Follows from Theorem \ref{powersthm}.
\end{proof}

In contrast
\begin{example}[CH]\label{notiotaHFC}
There is an HFC with no countable open $\iota $-cover.
\end{example}

\begin{proof}
By CH, enumerate the collection of ${\mathcal D}_\omega $-sets in $2^{\omega _1}$ by $\{u_\alpha :\alpha <\omega _1\}$ so that for $n<\omega $, $\{\sigma _{ni}:i\in \omega \}\subseteq Fn(\omega _1,2)$ such that \{dom$(\sigma _{ni}):i\in \omega \}$ is a pairwise disjoint collection of finite subsets of $\omega $ and $u_n=\bigcup _{i\in \omega }[\sigma _{ni}]$.  Moreover, for $\alpha \geq \omega $, $\{\sigma _{\alpha i}:i\in \omega \}\subseteq Fn(\omega _1,2)$ such that \{dom$(\sigma _{\alpha i}):i\in \omega \}$ is pairwise disjoint and $u_\alpha =\bigcup _{i\in \omega }[\sigma _{\alpha i}]$.  For $\alpha \geq \omega $, let ${\mathcal U}_\alpha =\{u_\beta :\beta <\alpha $, dom$(\sigma _{\beta i})\subseteq \alpha $, $\forall i\in \omega \}$.  Enumerate ${\mathcal U}_\alpha =\{v_{\alpha i}:i\in \omega \}$ where each $u\in {\mathcal U}_\alpha $ appears as infinitely many $v_{\alpha i}$'s.  Construct HFCs $X=\{x_\alpha :\omega \leq \alpha <\omega _1\}$ and $Y=\{y_\alpha :\omega \leq \alpha <\omega _1\}$ by induction, defining $x\restriction \alpha $, $y\restriction \alpha $ at stage $\alpha $ and letting $x_\alpha (\gamma )=0$, $\forall \gamma \geq \alpha $, $y_\alpha (\gamma )=1$, $\forall \gamma \geq \alpha $. 

For $\omega \leq \alpha < \omega _1$, define $\{\sigma ^\alpha _i:i\in \omega \}$ such that

\begin{description}
\item[(i)] if $v_{\alpha i}=u_\beta $ then $\sigma ^\alpha _i=\sigma _{\beta j}$ for some $j\in \omega $.
\item[(ii)] \{dom$(\sigma ^\alpha _i):i\in \omega \}$ is pairwise disjoint.
\end{description}
$v_{\alpha 0}\in {\mathcal U}_\alpha \Rightarrow v_{\alpha 0}=u_\beta $ for some $\beta <\alpha $ so let $\sigma^\alpha _0=\sigma _{\beta 0}$.

Fix $n>0$ and suppose $\{\sigma ^\alpha _i:i<n\}$ have been defined.  Again, since $v_{\alpha n}\in {\mathcal U}_\alpha $, let $\gamma <\alpha $ such that $v_{\alpha n}=u_\gamma $, where $u_\gamma =\bigcup \{[\sigma _{\gamma i}]:i\in \omega \}$ with \{dom$(\sigma _{\gamma i}):i\in \omega \}$ pairwise disjoint.  Thus, let $j_n\in \omega $ such that dom$(\sigma _{\gamma j_n})\cap $dom$(\sigma^\alpha _i)=\emptyset $, $\forall i<n$ and $\sigma ^\alpha _n=\sigma _{\gamma j_n}$. 

For $\omega \leq \alpha < \omega _1$, define $x_\alpha $, $y_\alpha \in 2^{\omega _1}$ as follows:
$x_\alpha (\gamma )=y_\alpha (\gamma )=\sigma ^\alpha _i(\gamma )$, $\forall \gamma \in \bigcup _{i\in \omega }$dom$(\sigma ^\alpha _i)$, $x_\alpha (\gamma )=y_\alpha (\gamma )=0$, $\forall \gamma \in \alpha \setminus \bigcup _{i\in \omega }$dom$(\sigma ^\alpha _i)$ and as above, $x_\alpha (\gamma )=0$, $\forall \gamma \geq \alpha $, $y_\alpha (\gamma )=1$, $\forall \gamma \geq \alpha $.

\begin{claim}
$X\cup Y$ is an HFC with no countable open $\iota $-cover.
\end{claim}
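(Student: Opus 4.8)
The plan is to prove the two halves of the Claim separately. For the first half one reads off the required final‑covering property directly from the recursion; the second half is then soft, resting only on the fact that $x_\alpha$ and $y_\alpha$ agree on an initial segment, together with the classical fact that HFC spaces are hereditarily Lindel\"of. Note that the first half presupposes $|X\cup Y|>\omega$: this is routine from the construction (distinct stages feed in genuinely different data), and it also follows a posteriori from the second half, since a countable $T_1$ space always carries the countable open $\iota$-cover $\{(X\cup Y)\setminus G:G\in[X\cup Y]^{<\omega}\}$. Observe also that $X\cap Y=\emptyset$: $x_\beta$ is eventually $0$ while $y_\alpha$ is eventually $1$, so these two functions already disagree at the coordinate $\max\{\alpha,\beta\}$.

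For the HFC property, fix a $\mathcal D_\omega$-set $[\varepsilon,B]$ in $2^{\omega_1}$. Writing $[\varepsilon,B]=\bigcup_{b\in B}[\varepsilon *b]$ we see that it is one of the sets $u_\gamma$ in the fixed enumeration, with pieces $\sigma_{\gamma i}$ ($i<\omega$) of pairwise disjoint finite domains; since $B$ is countable, the union of these domains is bounded below some $\delta<\omega_1$. I claim that $x_\alpha,y_\alpha\in u_\gamma$ whenever $\alpha>\gamma$ and $\alpha>\delta$, which gives $(X\cup Y)\setminus u_\gamma\subseteq\{x_\xi,y_\xi:\xi\le\max\{\gamma,\delta\}\}$, a countable set, as required. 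Indeed, for such $\alpha$ we have $u_\gamma\in\mathcal U_\alpha$ (its index is below $\alpha$ and each of its pieces has domain inside $\alpha$), so $u_\gamma=v_{\alpha n}$ for some $n$; clause (i) in the choice of the $\sigma^\alpha_i$ then yields $j$ with $\sigma^\alpha_n=\sigma_{\gamma j}$, and since both $x_\alpha$ and $y_\alpha$ extend $\sigma^\alpha_n=\sigma_{\gamma j}$ by construction, $x_\alpha,y_\alpha\in[\sigma_{\gamma j}]\subseteq u_\gamma$.

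For the second half, suppose toward a contradiction that $\mathcal W=\{W_n:n<\omega\}$ is a countable open $\iota$-cover of $X\cup Y$. Being an HFC, $X\cup Y$ is hereditarily Lindel\"of (this is classical, see \cite{J}; it can also be seen directly by the usual $\Delta$-system argument, thinning a hypothetical uncountable right-separated family of basic clopen witnesses to one whose stems, minus a common root, form a $\mathcal D_\omega$-set that fails to finally cover $X\cup Y$). Each $W_n$ is thus Lindel\"of, so writing it as the union of the traces on $X\cup Y$ of a family of basic clopen subsets of $2^{\omega_1}$ and passing to a countable subcover, we obtain a countable set $S_n\subseteq\omega_1$ and functions $\tau_{n,k}\in Fn(\omega_1,2)$ ($k<\omega$) with $\operatorname{dom}(\tau_{n,k})\subseteq S_n$ such that, for $z\in X\cup Y$, one has $z\in W_n$ iff $\tau_{n,k}\subseteq z$ for some $k$. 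Put $S=\bigcup_{n<\omega}S_n$ and pick $\alpha$ with $\omega\le\alpha<\omega_1$ and $S\subseteq\alpha$. Since $x_\alpha\restriction\alpha=y_\alpha\restriction\alpha$ and every $\tau_{n,k}$ has domain inside $\alpha$, we get $x_\alpha\in W_n\iff y_\alpha\in W_n$ for all $n$; as $x_\alpha\ne y_\alpha$, no member of $\mathcal W$ contains $x_\alpha$ and misses $y_\alpha$, contradicting that $\mathcal W$ is an $\iota$-cover applied to the disjoint pair $\{x_\alpha\},\{y_\alpha\}$.

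The step requiring the most care is this ``countable support'' reduction, i.e. the passage from a countable family of open sets to a single countable set of coordinates governing all of them; this is the only nonelementary ingredient, and it is exactly where hereditary Lindel\"ofness of $X\cup Y$ enters. Everything else is bookkeeping with the recursion. The upshot deserves to be recorded: $X\cup Y$ is an HFC that is not an $\iota$-space (indeed not even an $\iota_w$-space), so by Corollary \ref{strongHFCiota} it cannot be a strong HFC, and in fact some finite power of $X\cup Y$ fails to be hereditarily Lindel\"of — the promised contrast.
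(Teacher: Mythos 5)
Your proof is correct and takes essentially the same route as the paper's: the HFC property is read off from clause (i) of the recursion exactly as in the text (every $u_\gamma$ lies in $\mathcal{U}_\alpha$ for all sufficiently large $\alpha$, hence is absorbed as some $\sigma^\alpha_n$ into both $x_\alpha$ and $y_\alpha$), and the failure of countable $\iota$-covers is the same bounded-support argument, using hereditary Lindel\"ofness to express each $U_n$ via countably many basic clopen traces and then choosing $\beta$ above all their domains so that $x_\beta\in U_n\Leftrightarrow y_\beta\in U_n$ while $x_\beta\neq y_\beta$. The additional remarks (uncountability, $X\cap Y=\emptyset$, the concluding consequence via Corollary \ref{strongHFCiota}) are correct but not needed for the claim itself.
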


To see $X\cup Y$ is an HFC, fix $\beta <\omega _1$ and show $u_\beta $ is a final cover of $X\cup Y$.  Note that $\forall \beta <\omega _1$, $\exists \delta <\omega _1$ such that $u_\beta \in {\mathcal U}_\delta $.

So, let $\delta _\beta =\min \{\delta <\omega _1: u_\beta \in {\mathcal U}_\delta \}$.  Then $X\cup Y \setminus (\{x_\gamma :\gamma <\delta _\beta \}\cup \{y_\gamma :\gamma <\delta _\beta \})\subseteq u_\beta $.

Let ${\mathcal U}=\{U_n:n\in \omega \}$ be any countable open cover of $X\cup Y$.  Since $X\cup Y$ is hereditarily Lindel\" of (being HFC), let $\sigma _n(i)\in Fn(\omega _1,2)$ such that $U_n=\bigcup _{i\in \omega } [\sigma _n(i)]\cap (X\cup Y)$, $\forall n\in \omega $. Let $\alpha _n=\sup (\bigcup _{i\in \omega}$dom$(\sigma _n(i)))<\omega _1$ 
and $\alpha =\sup \{\alpha _n:n\in \omega \}<\omega _1$.  We claim that $\forall \beta >\alpha $, $x_\beta \in U_n \Leftrightarrow y_\beta \in U_n$, $\forall n\in \omega $ and hence ${\mathcal U}$ is not an $\iota $-cover.
Fix $\beta >\alpha $, $n\in \omega $.
\begin{align*}
x_\beta \in U_n &\Leftrightarrow (\exists i\in \omega )x_\beta \in [\sigma _n(i)]\\
&\Leftrightarrow (\exists i\in \omega )\sigma _n(i)\subseteq x_\beta \\
&\Leftrightarrow (\exists i\in \omega )x_\beta (\gamma )=\sigma _n(i)(\gamma ) \forall \gamma \in dom(\sigma _n(i)) \\
&\Leftrightarrow (\exists i\in \omega )y_\beta (\gamma )=\sigma _n(i)(\gamma ) \forall \gamma \in dom(\sigma _n(i)) \\
&\Leftrightarrow (\exists i\in \omega )\sigma _n(i)\subseteq y_\beta \\
&\Leftrightarrow (\exists i\in \omega )y_\beta \in [\sigma _n(i)]\\
&\Leftrightarrow y_\beta \in U_n\\
\end{align*}
\end{proof} 

This gives us another example of an L-space that is not an $\iota $-space, in fact, not even an $\iota _w$-space.  Although we already know consistently (under MA$_{\omega _1}$) that this space is not even an $\epsilon $-space, 
the argument used to show the space has no countable open $\iota $-cover will be used to show what we really want: there is a hereditarily $\epsilon $-space that is not an $\iota _w $-space.  Naively we tried to extend this argument to a strong HFC space (a hereditarily $\epsilon $-space), but along the way we discovered the missing ingredient.  Thus Example \ref{notiotaHFC} also provides an example of a certainly already known result.

\begin{corollary}
There is a pair of strong HFCs whose union is not a strong HFC.
\end{corollary}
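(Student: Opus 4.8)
The plan is to reverse-engineer the decomposition already present in Example \ref{notiotaHFC}. The space $X \cup Y$ constructed there is an L-space (indeed an HFC, hence hereditarily Lindel\"of) with no countable open $\iota$-cover, so in particular, by Corollary \ref{strongHFCiota}, $X \cup Y$ is \emph{not} a strong HFC; otherwise all its finite powers would be hereditarily Lindel\"of and it would be an $\iota$-space. Thus the entire content of this corollary is the assertion that the two pieces $X$ and $Y$ from which $X \cup Y$ was built are each individually strong HFCs. So the proof is a verification, step by step, that $X = \{x_\alpha : \omega \le \alpha < \omega_1\}$ and (symmetrically) $Y = \{y_\alpha : \omega \le \alpha < \omega_1\}$ satisfy the strong HFC$_w$ matrix condition, or at least the strong HFC condition, which by Theorem \ref{strongHFChL} gives hereditarily Lindel\"of finite powers for each.

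First I would recall precisely what must be checked: for each $k \in \omega$, given $A \in \mathcal{D}_{\omega_1}^k(\omega_1)$ and $B \in \mathcal{D}_\omega(\omega_1)$ (for the non-weak version) and $\varepsilon_0, \dots, \varepsilon_{k-1} \in 2^{n(B)}$, one must find $b \in B$ such that the set of $a \in A$ with $f_{\alpha_i} \supseteq \varepsilon_i * b$ for all $i < k$ (where $\alpha_0 < \dots < \alpha_{k-1}$ enumerate $a$) has size $\omega_1$. The key feature of the construction to exploit is the bookkeeping: the $\mathcal{D}_\omega$-sets $u_\beta = \bigcup_i [\sigma_{\beta i}]$ were enumerated in advance, and at stage $\alpha$ the functions $\{\sigma_i^\alpha : i \in \omega\}$ were chosen so that (i) each $\sigma_i^\alpha$ equals some $\sigma_{\beta j}$ whenever $v_{\alpha i} = u_\beta$, and (ii) their domains are pairwise disjoint, and crucially each $u \in \mathcal{U}_\alpha$ appears as $v_{\alpha i}$ for infinitely many $i$. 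Since $x_\alpha$ and $y_\alpha$ agree on $\alpha$ (and on $\bigcup_i \mathrm{dom}(\sigma_i^\alpha)$ in particular), the HFC verification done in the Claim of Example \ref{notiotaHFC} already shows that any $\mathcal{D}_\omega$-set $u_\beta$ finally covers $X \cup Y$; I would observe that the same argument, applied separately, shows each of $X$ and $Y$ is an HFC.

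Next, for the \emph{strong} HFC property I would argue the higher-dimensional matrix condition directly from the genericity built into the bookkeeping. Given a potential "threat" $B \in \mathcal{D}_\omega(\omega_1)$ together with $\varepsilon_0, \dots, \varepsilon_{k-1}$, the point is that $[\varepsilon_0, B], \dots, [\varepsilon_{k-1}, B]$ are $\mathcal{D}_\omega$-sets, each of which appears on the list $\{u_\alpha : \alpha < \omega_1\}$, and hence enters $\mathcal{U}_\delta$ for all sufficiently large $\delta < \omega_1$; by the "appears infinitely often" clause it was handled cofinally often in the recursion. At a stage $\alpha$ past which all the relevant $u$'s are available, the choice of the $\sigma_i^\alpha$ forces $x_\alpha$ (and $y_\alpha$) to extend one of the $\varepsilon_i * b$'s for a suitable $b \in B$; arranging this simultaneously for $k$-tuples with increasing coordinates is exactly what the disjointness clause (ii) and the fact that we may pass to arbitrarily late stages buy us, giving $\omega_1$-many $a \in A$ witnessing the condition. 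I would then note that since the $y_\alpha$'s are defined by the symmetric recipe ($y_\alpha(\gamma) = 1$ for $\gamma \ge \alpha$ in place of $0$, same values on $\bigcup_i \mathrm{dom}(\sigma_i^\alpha)$), the argument for $Y$ is identical, and conclude by Theorem \ref{strongHFChL} and Corollary \ref{strongHFCiota} that $X$ and $Y$ are strong HFCs while $X \cup Y$ is not.

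The main obstacle I anticipate is the simultaneous-extension step in the $k$-dimensional case: one must check that the single recursive choice of $\{\sigma_i^\alpha\}$ at stage $\alpha$ can be made to cohere with $k$ different pairs $(\varepsilon_i, B)$ at once and for cofinally many $\alpha$, rather than just one at a time — this is where the precise ordering of the enumeration $\{v_{\alpha i}\}$ and the pairwise-disjointness of $\{\mathrm{dom}(\sigma_i^\alpha)\}$ must be leveraged carefully, essentially re-deriving the standard fact that a finite-support generic construction of this Hausdorff-gap/HFC type yields the strong (all-finite-powers) version. Everything else is a routine transcription of the HFC verification already carried out in Example \ref{notiotaHFC}.
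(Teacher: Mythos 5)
There is a genuine gap, and it is exactly the one you flag at the end as an ``anticipated obstacle'': the simultaneous-extension step for $k$-tuples does not follow from the bookkeeping in Example \ref{notiotaHFC}, and the paper does not claim that it does. The recursion there only arranges, at each stage $\alpha$, that the single point $x_\alpha$ (and $y_\alpha$) meets each $\mathcal{D}_\omega$-set $u_\beta \in \mathcal{U}_\alpha$; this is a one-point-at-a-time guarantee and yields only the HFC property. The strong HFC$^k$ condition requires, for a fixed $B$ and $\varepsilon_0,\dots,\varepsilon_{k-1}$, a \emph{single} $b\in B$ such that uncountably many increasing $k$-tuples $\{\alpha_0<\dots<\alpha_{k-1}\}$ satisfy $f_{\alpha_i}\supseteq\varepsilon_i*b$ for all $i<k$ simultaneously. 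The construction treats $[\varepsilon_0,B],\dots,[\varepsilon_{k-1},B]$ as unrelated entries on the list of $\mathcal{D}_\omega$-sets, so each $x_{\alpha_i}$ lands in each of them via possibly different $b$'s chosen at different stages; nothing coordinates these choices across distinct points, and there is no reason the $X$ and $Y$ of Example \ref{notiotaHFC} are strong HFCs as constructed. Your sketch does not supply the missing coordination; it only restates that it is needed.

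The paper circumvents this entirely: it does not prove $X$ and $Y$ themselves are strong HFCs. Instead it fixes an $\omega_1$-chain $\vec{N}=\langle N_\alpha:\alpha<\omega_1\rangle$ of countable elementary submodels containing $X$, $Y$ and the index maps, recursively selects an uncountable $Z=\{x_\alpha:\alpha\in A\}$ that is \emph{separated} by $\vec{N}$, and invokes Theorem 2.1 of \cite{S} (Soukup), which says that an uncountable subset of an HFC separated by such a chain is a strong HFC. The observation that $x_\alpha\in N_\gamma$ iff $y_\alpha\in N_\gamma$ (since each is definable from the other together with $\alpha$) transfers the separation to $\{y_\alpha:\alpha\in A\}$, so both halves are strong HFCs, while the union restricted to $A$ still has no countable $\iota$-cover by the argument of Example \ref{notiotaHFC} and hence is not a strong HFC by Corollary \ref{strongHFCiota}. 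To repair your proof you would either need to import this elementary-submodel thinning plus Soukup's theorem, or redo the construction of Example \ref{notiotaHFC} with genuinely stronger bookkeeping that handles all $k$-tuples and all tuples $(\varepsilon_0,\dots,\varepsilon_{k-1})$ at once.
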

\begin{proof}
Let $X=\{x_\alpha :\alpha <\omega _1\}$, $Y=\{y_\alpha :\alpha <\omega _1\}$ be the HFCs from Example \ref{notiotaHFC}.  Let $f_X:X\rightarrow \omega _1$ such that $f_X(x_\alpha )=\alpha $ and $f_Y:Y\rightarrow \omega _1$ such that $f_Y(y_\alpha )=\alpha $.
\begin{claim}
$\exists A\in [\omega _1]^{\omega _1}$ such that $\{x_\alpha :\alpha \in A\}$, $\{y_\alpha :\alpha \in A\}$ are strong HFCs.
\end{claim}
\begin{proof}[Proof of Claim]
Let $\vec{N}=\left\langle N_\alpha :\alpha <\omega _1 \right\rangle $ be an $\omega _1$-chain of countable elementary submodels of some $H_\theta $ such that $X,Y,f_X,f_Y \in N_0$ and $\beta <\alpha <\omega _1 \Rightarrow N_\beta \subsetneq N_\alpha $.  Define by recursion $Z=\{z_\alpha :\alpha <\omega _1\}\subseteq X$, separated by $\vec{N}$:
\\
Let $z_0\in X\cap N_0$
\\
Fix $\alpha >0$ and suppose $\{z_\beta :\beta <\alpha \}$ have been defined such that $z_\beta \in X\cap N_\beta \setminus \bigcup _{\gamma <\beta }N_\gamma $.  Since $X$ is uncountable and $\bigcup _{\beta <\alpha }N_\beta $ is countable, $X\setminus \bigcup _{\beta <\alpha }N_\beta \neq \emptyset $.  So, by elementarity, let $z_\alpha \in X\cap N_\alpha \setminus \bigcup _{\beta <\alpha }N_\beta \neq \emptyset $.
\\
To see $Z$ is separated by $\vec{N}$, let $\{z_\alpha ,z_\beta \}\in [Z]^2$.  Without loss of generality, suppose $\alpha <\beta $.  Then, by construction, $N_\alpha \cap \{z_\alpha ,z_\beta \}=\{z_\alpha \}$ and hence $\exists \alpha <\omega _1$ such that $\vert N_\alpha \cap \{z_\alpha ,z_\beta \}\vert = 1$.
\\
Then, by Theorem 2.1 of \cite{S}, $Z$ is a strong HFC.  Since $Z\in [X]^{\omega _1}$, let $A\in [\omega _1]^{\omega _1}$ such that $Z=\{x_\alpha :\alpha \in A\}$.  We claim that $\{y_\alpha :\alpha \in A\}$ is separated by $\vec{N}$ and hence is a strong HFC (again by Theorem 2.1 of \cite{S}).
\begin{note}
$\forall \alpha ,\gamma <\omega _1$, $x_\alpha \in N_\gamma \Leftrightarrow y_\alpha \in N_\gamma $.
\end{note}
\begin{proof}[Proof of Note]
Suppose $x_\alpha \in N_\gamma $.  Since $f_X\in N_\gamma $, $f_X(x_\alpha )=\alpha \in N_\gamma $ and hence $x_\alpha \restriction \alpha \in N_\gamma $.  Recall that $y_\alpha $ is definable from $x_\alpha \restriction \alpha $, $\alpha \in N_\gamma $ since $y_\alpha \restriction \alpha =x_\alpha \restriction \alpha $ and $y_\alpha (\gamma )=1$, $\forall \gamma \geq \alpha $.  Hence $y_\alpha \in N_\gamma $.  Similarly, $y_\alpha \in N_\gamma \Rightarrow x_\alpha \in N_\gamma $.
\end{proof}
Then it is clear $\{y_\alpha :\alpha \in A\}$ is separated by $\vec{N}$.  If $\{y_\alpha ,y_\beta \}\in [\{y_\alpha :\alpha \in A\}]^2$ then $\{x_\alpha ,x_\beta \}\in [Z]^2$ so $\exists \gamma <\omega _1$ such that $\vert N_\gamma \cap \{x_\alpha ,x_\beta \}\vert =1\Leftrightarrow \vert N_\gamma \cap \{y_\alpha ,y_\beta \}\vert =1$ (by the note).
\end{proof}
Therefore, $\{x_\alpha :\alpha \in A\}, \{y_\alpha :\alpha \in A\}$ are strong HFCs and as in Example \ref{notiotaHFC}, $\{x_\alpha :\alpha \in A\}\cup \{y_\alpha :\alpha \in A\}$ has no countable $\iota $-cover.  Thus, $\{x_\alpha :\alpha \in A\}\cup \{y_\alpha :\alpha \in A\}$ is not an $\iota $-space and hence is not a strong HFC by Corollary \ref{strongHFCiota}.
\end{proof}

Fortunately, considering strong HFC$_w$ spaces and working a little harder provides us with the desired example.  In \cite{J}, Juh\' asz constructs a strong HFC$_w$ space in a generic extension obtained by adding a Cohen or random real (in fact a generic extension with a slightly more general property).  Using this same construction, we obtain two strong HFC$_w$ spaces whose union is a hereditarily $\epsilon $-space but has no countable open $\iota $-cover, hence not $\iota _w$.  This gives an example of a space in which every subspace has any finite power Lindel\" of, but there are two subspaces whose product is not Lindel\" of.  In particular, all squares of subspaces are Lindel\" of, but there is a rectangle that is not Lindel\" of; a hereditarily Lindel\" of space whose square is not hereditarily Lindel\" of.

In comparison to Definition \ref{almostiota},
\begin{definition}\label{almostepsilon}
A space $X$ is almost-$\epsilon $ if for every open $\omega $-cover ${\mathcal U}$ of $X$, there is a countable ${\mathcal V}\subseteq {\mathcal U}$ and $A\in [X]^\omega $ such that ${\mathcal V}$ is an $\omega $-cover of $X\setminus A$.
\end{definition}

\begin{lemma}\label{almostepsilonlem}
If $X$ is almost-$\epsilon $ then $X$ is an $\epsilon $-space.
\end{lemma}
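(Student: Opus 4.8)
The plan is to mimic the elementary-submodel closing-off argument used in the proof of Lemma \ref{almostiotalem}, but now in the easier setting of $\omega$-covers where no separation of disjoint finite sets is required. So suppose $X$ is almost-$\epsilon$ and let $\mathcal{U}$ be an open $\omega$-cover of $X$; our goal is to extract a countable $\omega$-subcover. First I would fix a countable elementary submodel $\mathcal{M}$ of $H_\theta$ (for $\theta$ large) with $\mathcal{U}, (X,\tau) \in \mathcal{M}$, and consider $\mathcal{V} = \mathcal{U} \cap \mathcal{M}$, which is countable. The claim to prove is that $\mathcal{V}$ is an $\omega$-cover of $X$.

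To verify the claim, let $F' \in [X]^{<\omega}$ and put $F = F' \cap \mathcal{M}$; note $F \in \mathcal{M}$ since it is a finite subset of $\mathcal{M}$. By elementarity $\mathcal{M} \models$ ``$X$ is almost-$\epsilon$'', so applying the definition to the open $\omega$-cover $\mathcal{U}_F := \{U \in \mathcal{U} : F \subseteq U\} \in \mathcal{M}$ of $X$ (which is indeed an $\omega$-cover because $\mathcal{U}$ is), we obtain inside $\mathcal{M}$ a countable $\mathcal{V}_F \subseteq \mathcal{U}_F$ and $A_F \in [X]^\omega \cap \mathcal{M}$ such that $\mathcal{V}_F$ is an $\omega$-cover of $X \setminus A_F$. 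Since $\mathcal{V}_F$ and $A_F$ are countable and lie in $\mathcal{M}$, we have $\mathcal{V}_F \cup A_F \subseteq \mathcal{M}$. Now $F' \setminus F$ is a finite subset of $X \setminus A_F$ — here we use that $F' \cap \mathcal{M} = F$ forces $(F' \setminus F) \cap \mathcal{M} = \emptyset$, so in particular $(F' \setminus F) \cap A_F = \emptyset$ because $A_F \subseteq \mathcal{M}$. Hence there is $V \in \mathcal{V}_F$ with $F' \setminus F \subseteq V$. But $V \in \mathcal{V}_F \subseteq \mathcal{U}_F$, so $F \subseteq V$ as well, giving $F' \subseteq V$; and $V \in \mathcal{V}_F \subseteq \mathcal{M}$ together with $V \in \mathcal{U}$ gives $V \in \mathcal{V} = \mathcal{U} \cap \mathcal{M}$. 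Thus $\mathcal{V}$ is a countable $\omega$-subcover of $\mathcal{U}$, establishing that $X$ is an $\epsilon$-space.

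I expect the only subtle point — and the place where one must be careful in writing it up — is the bookkeeping around ``$F' \setminus F$ avoids $A_F$''. The rest is just the standard observation that a countable object in $\mathcal{M}$ is a subset of $\mathcal{M}$, plus an application of elementarity to reflect the almost-$\epsilon$ property. Since we are dealing with $\omega$-covers rather than $\iota$-covers, we never need the points-regular-$G_\delta$ hypothesis that appeared in Lemma \ref{almostiotalem}, which is why the statement here is unconditional; this is also the payoff that will feed into constructing the hereditarily $\epsilon$-space with non-hereditarily-Lindel\"of square. One alternative, avoiding elementary submodels, would be a direct transfinite recursion building the countable subcover by handling finite subsets one at a time and absorbing the exceptional countable sets, but the submodel argument is cleaner and parallels the earlier proof, so that is the route I would take.
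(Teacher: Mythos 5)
Your proof is correct and is essentially identical to the paper's: both fix a countable elementary submodel $\mathcal{M}$ containing $\mathcal{U}$ and $(X,\tau)$, show $\mathcal{U}\cap\mathcal{M}$ is an $\omega$-subcover by reflecting the almost-$\epsilon$ property to the cover $\{U\in\mathcal{U}: F'\cap\mathcal{M}\subseteq U\}\in\mathcal{M}$, and use that the resulting countable $\mathcal{V}_F$ and $A_F$ are subsets of $\mathcal{M}$ so that $F'\setminus\mathcal{M}$ misses $A_F$. The bookkeeping point you flag is exactly the one the paper handles, and your write-up handles it correctly.
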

\begin{proof}
Let ${\mathcal U}$ be any open $\omega $-cover of $X$ and ${\mathcal M}$ be a countable elementary submodel of some $H_\theta $ ($\theta $ sufficiently large) such that ${\mathcal U}, (X,\tau )\in {\mathcal M}$.
\begin{claim}
${\mathcal U}\cap {\mathcal M}$ is a countable $\omega $-subcover of ${\mathcal U}$.
\end{claim}
Let $F\in [X]^{<\omega }$ and consider ${\mathcal U}_{F\cap {\mathcal M}}=\{U\in {\mathcal U}:F\cap {\mathcal M}\subseteq U\}\in {\mathcal M}$ (since $F\cap {\mathcal M}\subseteq {\mathcal M}$ is finite).  Notice that ${\mathcal U}_{F\cap {\mathcal M}}$ is an open $\omega $-cover of $X$ and since, by elementarity, ${\mathcal M}\models X$ is almost-$\epsilon $, let ${\mathcal V}\in {\mathcal M}$ be countable and $A\in [X]^\omega \cap {\mathcal M}$ such that ${\mathcal V}\subseteq {\mathcal U}_{F\cap {\mathcal M}}$ is an $\omega $-cover of $X\setminus A$.   Since $A,{\mathcal V}\in {\mathcal M}$ are countable, $A,{\mathcal V}\subseteq {\mathcal M}$.  In particular, ${\mathcal V}\subseteq {\mathcal U}\cap {\mathcal M}$.  Also, since $A\subseteq {\mathcal M}$, $F\setminus {\mathcal M}\in[X\setminus A]^{<\omega }$ so let $V\in {\mathcal V}\subseteq {\mathcal U}_{F\cap {\mathcal M}}$ such that $F\setminus {\mathcal M}\subseteq V$.  Then $V\in {\mathcal U}\cap {\mathcal M}$ such that $F\subseteq V$.
\end{proof}

The following alternate characterization of an HFC$_w^k$ space is an adaptation of the characterization of an HFC$_w$ space from \cite{J}.

\begin{theorem}
For any $k\in \omega $, if $X\subseteq 2^\lambda $ with $\vert X\vert >\omega $ and $\lambda >\omega $ is HFC$_w^k$, then  
\begin{align*}
& \forall B\in {\mathcal D}_{\omega _1}(\lambda ), \tag{$*_k$} \forall \varepsilon _0,\dots ,\varepsilon _{k-1}\in 2^{n(B)} , \exists C\in [B]^\omega , \exists \alpha \in \kappa \\ 
& (\forall a=\{\alpha _i:i<k\}\in[\kappa \setminus \alpha ]^k ) 
(\exists b\in C) f_{\alpha _i}\supseteq \varepsilon _i*b, \forall i\in k.
\end{align*}
\end{theorem}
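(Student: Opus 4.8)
The plan is to prove the implication by contradiction: assuming $X$ is HFC$_w^k$ (say via the matrix $F$, with $X=\{f_\alpha:\alpha<\kappa\}$) but $(*_k)$ fails, I will construct an $A\in\mathcal D_{\omega_1}^k(\kappa)$ which --- together with the $B$ and $\varepsilon_i$'s witnessing the failure --- contradicts the defining property of an HFC$_w^k$ matrix. This is the same kind of stage-by-stage argument used in \cite{J} to characterize strong HFC$_w$ spaces.

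Concretely, the failure of $(*_k)$ gives me $B\in\mathcal D_{\omega_1}(\lambda)$ and $\varepsilon_0,\dots,\varepsilon_{k-1}\in 2^{n(B)}$ such that for every $C\in[B]^\omega$ and every $\nu\in\kappa$ there is an increasing $k$-tuple $a=\{\alpha_i:i<k\}\subseteq\kappa\setminus\nu$ that is \emph{bad for $C$}: for each $b\in C$ some $i<k$ has $f_{\alpha_i}\not\supseteq\varepsilon_i*b$. Enumerate $B=\{b_\gamma:\gamma<\omega_1\}$ and recurse on $\gamma<\omega_1$. Having chosen pairwise disjoint $k$-tuples $\{a_\xi:\xi<\gamma\}$, set $\nu_\gamma:=\sup_{\xi<\gamma}(\max a_\xi+1)$, let $C_\gamma:=\{b_\xi:\xi\le\gamma\}\cup\{b_n:n<\omega\}\in[B]^\omega$, and apply the failure of $(*_k)$ to $C_\gamma$ and $\nu_\gamma$ to obtain a $k$-tuple $a_\gamma=\{\alpha^\gamma_i:i<k\}\subseteq\kappa\setminus\nu_\gamma$ (increasing enumeration) bad for $C_\gamma$. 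Since $\nu_\gamma$ exceeds every element of every $a_\xi$ with $\xi<\gamma$, the $a_\gamma$ are pairwise disjoint and distinct, so $A:=\{a_\gamma:\gamma<\omega_1\}\in\mathcal D_{\omega_1}^k(\kappa)$.

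It remains to see that $A$, $B$, and $\varepsilon_0,\dots,\varepsilon_{k-1}$ witness that $F$ is \emph{not} HFC$_w^k$. Fix $b=b_\gamma\in B$. For every $\xi\ge\gamma$ we have $b_\gamma\in C_\xi$, so the badness of $a_\xi$ for $C_\xi$ produces an $i<k$ with $f_{\alpha^\xi_i}\not\supseteq\varepsilon_i*b_\gamma$; hence $a_\xi\notin\{a\in A:(\forall i<k)\ f_{\alpha_i}\supseteq\varepsilon_i*b_\gamma\}$. Thus for every $b\in B$ the set $\{a\in A:(\forall i<k)\ f_{\alpha_i}\supseteq\varepsilon_i*b\}$ is contained in $\{a_\xi:\xi<\gamma\}$ and so is countable, which directly contradicts the HFC$_w^k$ property applied to $A$, $B$, $\varepsilon_0,\dots,\varepsilon_{k-1}$ (that property would supply some $b\in B$ making this set of size $\omega_1$).

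I do not anticipate a genuine obstacle: the whole point is that a failure of $(*_k)$ is nothing but an $\omega_1$-supply of \emph{bad} $k$-tuples which, by choosing them cofinally far out at each stage, assemble into a single $\mathcal D_{\omega_1}^k$-family defeating HFC$_w^k$. The only minor care needed is bookkeeping (the padding in $C_\gamma$ above is what keeps it in $[B]^\omega$ at finite stages) and the verification that $\nu_\gamma<\kappa$ at each countable stage; the latter is automatic when $\kappa=\omega_1$ --- the case relevant to HFC spaces --- and, more generally, whenever $\operatorname{cf}(\kappa)>\omega$.
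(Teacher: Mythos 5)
Your proof is correct and follows essentially the same route as the paper's: negate $(*_k)$, enumerate $B$, feed the initial segments $C_\gamma$ into the failure hypothesis to recursively extract pairwise disjoint bad $k$-tuples forming $A\in\mathcal D_{\omega_1}^k(\kappa)$, and observe that each $b_\gamma$ can then be ``good'' for at most countably many members of $A$, contradicting HFC$_w^k$. Your padding of $C_\gamma$ at finite stages and your explicit remark about $\nu_\gamma<\kappa$ are minor points of care that the paper glosses over, but the argument is the same.
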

\begin{proof}
Suppose, by way of contradiction, that there is $B\in {\mathcal D}_{\omega _1}(\lambda )$ and $\varepsilon _0,\dots ,\varepsilon _{k-1}\in 2^{n(B)}$ such that $\forall C\in [B]^\omega $ and $\forall \alpha <\kappa $, $\exists a=\{\alpha _i:i<k\}\in [\kappa \setminus \alpha ]^k$ and $\exists j\in k$ such that $f_{\alpha _j}\nsupseteq \varepsilon _j*b$, $\forall b\in C$.  Enumerate $B=\{b_\gamma :\gamma <\omega _1\}$ and let $C_\mu =\{b_\gamma :\gamma <\mu \}$, $\forall \mu <\omega _1$.  Then $C_\mu \in [B]^\omega $, $\forall \mu <\omega _1$ so define by recursion $\{\alpha _\mu :\mu <\omega _1\}\subseteq \kappa $ so that $A=\{a_\mu :\mu <\omega _1\}\in {\mathcal D}_{\omega _1}^k(\kappa )$ where, by assumption, $a_\mu =\{\alpha _i^\mu :i<k\}\in [\kappa \setminus \alpha _\mu ]^k$ such that $f_{\alpha _j^\mu}\nsupseteq \varepsilon _j*b$, $\forall b\in C_\mu $, for some $j<k$.  Then, since $X$ is HFC$_w^k$, $A\in {\mathcal D}_{\omega _1}^k(\kappa )$, $B\in {\mathcal D}_{\omega _1}(\lambda )$ and $\varepsilon _0,\dots ,\varepsilon _{k-1}\in 2^{n(B)}$, let $b\in B$ such that $\vert \{a\in A:\forall i\in k(f_{\alpha _i}\supseteq \varepsilon _i*b)\}\vert=\omega _1$.  But then $b=b_\mu $ for some $\mu <\omega _1$ and $\{a\in A:\forall i\in k(f_{\alpha _i}\supseteq \varepsilon _i*b)\}\subseteq \{a_\gamma :\gamma \leq \mu \}$ (since $b=b_\mu \in C_\gamma $, $\forall \gamma > \mu $), which is countable and hence we have a contradiction. 
\end{proof}

\begin{theorem}\label{heredepsilonnotiota}
Con(ZFC) $\rightarrow $ Con(ZFC + $ \exists $ hereditarily $\epsilon $-space with no countable open $\iota $-cover).
\end{theorem}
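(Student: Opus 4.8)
The plan is to work inside the forcing extension that Juh\'asz uses in \cite{J} to construct a strong HFC$_w$ space (adding a Cohen or a random real, or more generally any extension with the slightly more general property isolated there), and to run that construction so as to produce \emph{two} strong HFC$_w$ spaces $X=\{x_\alpha:\alpha<\omega_1\}$ and $Y=\{y_\alpha:\alpha<\omega_1\}$ inside $2^{\omega_1}$ linked exactly the way the two HFCs of Example \ref{notiotaHFC} are: at stage $\alpha$ one commits to $x_\alpha\restriction\alpha=y_\alpha\restriction\alpha$ and then sets $x_\alpha(\gamma)=0$, $y_\alpha(\gamma)=1$ for all $\gamma\ge\alpha$. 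Equivalently, one may build $X$ via \cite{J} (normalized so that $x_\alpha(\gamma)=0$ for $\gamma\ge\alpha$) and then \emph{define} $Y$ by $y_\alpha\restriction\alpha=x_\alpha\restriction\alpha$ and $y_\alpha(\gamma)=1$ for $\gamma\ge\alpha$; in this form the first task is to verify that $Y$ is again strong HFC$_w$, and this is short. Given $k\in\omega$, $A\in\mathcal D^k_{\omega_1}(\omega_1)$, $B\in\mathcal D_{\omega_1}(\omega_1)$ and $\varepsilon_0,\dots,\varepsilon_{k-1}\in 2^{n(B)}$, use that $X$ is HFC$_w^k$ to pick $b\in B$ with $\vert\{a\in A:\forall i<k\ x_{\alpha_i}\supseteq\varepsilon_i*b\}\vert=\omega_1$ (writing $a=\{\alpha_i:i<k\}$ in increasing order); since the members of $A$ are pairwise disjoint, only countably many $a\in A$ meet $\sup(b)+1$, so uncountably many of the surviving $a$ satisfy $\min(a)>\sup(b)=\sup(\mathrm{dom}(\varepsilon_i*b))$, and for those, $x_{\alpha_i}\supseteq\varepsilon_i*b\iff y_{\alpha_i}\supseteq\varepsilon_i*b$ because $x_{\alpha_i}\restriction\alpha_i=y_{\alpha_i}\restriction\alpha_i$. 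Hence $b$ witnesses HFC$_w^k$ for $Y$, and as $k$ was arbitrary, $Y$ is strong HFC$_w$; by Theorem \ref{strongHFChL} every finite power of $X$ and of $Y$ is hereditarily Lindel\"of, so in particular $X\cup Y$ is hereditarily Lindel\"of.

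Next I would show that $X\cup Y$ has no countable open $\iota$-cover, by the argument of Example \ref{notiotaHFC}. Given a countable open cover $\{U_n:n\in\omega\}$ of $X\cup Y$, hereditary Lindel\"ofness lets one write $U_n=\bigcup_{i\in\omega}[\sigma_n(i)]\cap(X\cup Y)$ with $\sigma_n(i)\in Fn(\omega_1,2)$; put $\alpha=\sup\{\sup(\mathrm{dom}(\sigma_n(i))):n,i\in\omega\}<\omega_1$, and note that for every $\beta>\alpha$ and every $n$ one has $x_\beta\in U_n\iff y_\beta\in U_n$, since $x_\beta\restriction\alpha=y_\beta\restriction\alpha$ and $\mathrm{dom}(\sigma_n(i))\subseteq\alpha$. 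Thus no $U_n$ contains some $x_\beta$ while missing $y_\beta$, so $\{U_n:n\in\omega\}$ is not an $\iota$-cover and $X\cup Y$ is not an $\iota_w$-space.

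The crux is showing that $X\cup Y$ is a \emph{hereditarily} $\epsilon$-space, and this is the step I expect to be the main obstacle. It cannot be obtained by making all finite powers of $X\cup Y$ hereditarily Lindel\"of: that would force $X\cup Y$ to be an $\iota$-space by Corollary \ref{strongHFCiota}, contradicting the previous paragraph (indeed $X\times Y$ will be a non-Lindel\"of subspace of the Lindel\"of space $(X\cup Y)^2$, so $(X\cup Y)^2$ is not hereditarily Lindel\"of). Instead I would use the ``almost'' machinery of Definition \ref{almostepsilon} and Lemma \ref{almostepsilonlem}: it suffices to prove that every subspace of $X\cup Y$ --- equivalently, up to homeomorphism, every set of the form $\bigcup_{A\subseteq m}(S_X^{\,|A|}\times S_Y^{\,m-|A|})$ with $S_X\subseteq X$ and $S_Y\subseteq Y$ --- is almost-$\epsilon$, since then each such set is an $\epsilon$-space, hence Lindel\"of. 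The tool is the characterization $(*_k)$ of HFC$_w^k$ matrices: given an open $\omega$-cover of such a mixed finite power, one encodes it by a $B\in\mathcal D_{\omega_1}(\omega_1)$ together with parameters $\varepsilon_i$, applies $(*_k)$ separately to the coordinates lying in $X$ and those lying in $Y$, and thereby extracts a \emph{countable} $C\in[B]^\omega$ together with a bound so that the countably many cover members indexed by $C$ $\omega$-cover everything except a countable set of ``small'' points. Assembling these countably many members over the two spaces and the finitely many coordinate-patterns yields a countable $\omega$-subcover modulo a countable set; that is, the space is almost-$\epsilon$, whence $X\cup Y$ is hereditarily $\epsilon$. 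The delicate point --- running $(*_k)$ simultaneously through the $X$- and the $Y$-coordinates of a mixed power and checking that merging the finitely many coordinate-patterns does not reintroduce uncountably many exceptional points --- is exactly the ``missing ingredient'' alluded to after Example \ref{notiotaHFC}, and it is the reason HFC$_w$ rather than HFC is needed, since the countable $C$ furnished by $(*_k)$ is precisely what supplies the countable exceptional set demanded by almost-$\epsilon$. Granting this, $X\cup Y$ is a hereditarily $\epsilon$-space with no countable open $\iota$-cover, and since it is built in a Cohen (or random) real extension of an arbitrary model of ZFC, this gives Con(ZFC)$\,\to\,$Con(ZFC $+$ such a space exists).
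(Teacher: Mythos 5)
Your overall plan coincides with the paper's: the same pair of linked HFC$_w$ spaces with $x_\alpha\restriction\alpha=y_\alpha\restriction\alpha$ and constant tails $0$ and $1$, the same reuse of the argument of Example \ref{notiotaHFC} to kill countable open $\iota$-covers, and the same strategy of proving every subspace almost-$\epsilon$ and invoking Lemma \ref{almostepsilonlem}. The first two parts of your write-up are fine. But the third part --- which is the entire content of the theorem --- is not actually proved: you describe the tools ($(*_k)$, a set $B\in\mathcal{D}_{\omega_1}(\omega_1)$ ``encoding'' the cover, a countable $C\in[B]^\omega$) and then write ``Granting this\dots''. The step you defer is exactly the one the paper spends two pages on, and your sketch omits the mechanisms that make it work. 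Concretely: (i) you never say where the uncountable pairwise \emph{disjoint} family $B$ comes from. In the paper it is produced by fixing a countable elementary submodel $\mathcal{M}\ni\mathcal{U}$, taking a finite $F\subseteq Z\setminus\mathcal{M}$, shrinking a cover member containing $F$ to a union of basic clopen sets $[\varepsilon_i*b]$, splitting $b$ over the root $r=b\cap\mathcal{M}$, observing that $b\setminus r\notin\mathcal{M}$ forces the family of admissible ``copies'' of $b\setminus r$ to be uncountable, and then refining to a $\Delta$-system with root $r$. Without this elementarity-plus-$\Delta$-system step there is no legitimate input to $(*_k)$. (ii) You never use that $F\notin\mathcal{M}$ forces the indices $\alpha_i$ above the bound $\alpha\in\mathcal{M}$ supplied by $(*_k)$, which is what lets the countable $C\subseteq\mathcal{M}$ actually catch $F$; this is precisely how ``almost-$\epsilon$ modulo the countable set $Z\cap\mathcal{M}$'' is achieved, and it is not a routine check.

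A secondary confusion: you recast ``every subspace of $X\cup Y$ is almost-$\epsilon$'' as a statement about sets of the form $\bigcup_{A\subseteq m}(S_X^{\,|A|}\times S_Y^{\,m-|A|})$, i.e.\ about subsets of finite powers. That identification is wrong (a subspace of $X\cup Y$ is just $S_X\cup S_Y$), and it is also unnecessary: the whole point of the $\omega$-cover/almost-$\epsilon$ formulation is that one never takes open covers of finite powers --- one works with a single $\omega$-cover of $Z=S_X\cup S_Y$ and a finite set $F\subseteq Z$ whose elements may be drawn from either $X$ or $Y$, the mixing being absorbed into the choice of the patterns $\varepsilon_0,\dots,\varepsilon_{n-1}$ fed to $(*_k)$. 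Your instinct that HFC$_w$ (rather than HFC) is needed because $(*_k)$ returns a \emph{countable} $C$ matching the countable exceptional set of almost-$\epsilon$ is correct, and your verification that $Y$ inherits strong HFC$_w$ from $X$ is a reasonable addition; but as it stands the proposal asserts rather than proves the theorem's central claim.
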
 
\begin{proof}
Construct two HFC$_w$ spaces $X=\{x_\alpha :\alpha <\omega _1\}$ and $Y=\{y_\alpha :\alpha <\omega _1\}$, as in (4.2) of \cite{J}, so that $x_\alpha \restriction \alpha =y_\alpha \restriction \alpha =f_\alpha \restriction \alpha $, where $f_\alpha =F(\alpha ,-)=r\circ h_\alpha $ and $\forall \gamma \geq \alpha $, $x_\alpha (\gamma )=0, y_\alpha (\gamma )=1$.
\begin{claim}
$X\cup Y$ is hereditarily-$\epsilon $ but has no countable open $\iota $-cover.
\end{claim}
Let $Z\subseteq X\cup Y$ and ${\mathcal U}$ be any open $\omega $-cover of $Z$.  Without loss of generality, ${\mathcal U}$ consists of finite unions of basic open sets in $2^{\omega _1}$.  That is, $\forall U\in {\mathcal U}$, $U=\bigcup _{i<n_U}[\sigma _i^U]$ with $\sigma _i^U\in Fn(\omega _1,2)$.  Let ${\mathcal M}$ be a countable elementary submodel of $H_\theta $ (for some large enough $\theta $) such that ${\mathcal U}\in {\mathcal M}$.  We claim that ${\mathcal U}\cap {\mathcal M}$ is a countable $\omega $-cover of $Z\setminus (Z\cap {\mathcal M})$ showing that $Z$ is almost-$\epsilon $ and hence an $\epsilon $-space by Lemma \ref{almostepsilonlem}, as required.  To see ${\mathcal U}\cap {\mathcal M}$ is an $\omega $-cover, let $F\in [Z\setminus (Z\cap {\mathcal M})]^{<\omega }$.  Enumerate $F=\{f_{\alpha _0},\dots ,f_{\alpha _{n-1}}\}$ for some $n\in \omega $, where $f_{\alpha _i}=x_{\alpha _i}\in Z\cap X$ or $f_{\alpha _i}=y_{\alpha _i}\in Z\cap Y$ so that if $\exists \beta <\omega _1$ such that $x_\beta ,y_\beta \in Z$, $f_\beta =x_\beta $.  Notice that this enumeration is not a problem since if $F$ is the original set and there is $U\in {\mathcal U}\cap {\mathcal M}$ such that $\{f_{\alpha _0},\dots ,f_{\alpha _{n-1}}\}\subseteq U$, then, as above, since ${\mathcal U}\cap {\mathcal M}$ is countable, enumerate ${\mathcal U}\cap {\mathcal M}=\{U_k:k\in \omega \}$ where $U_k=\bigcup _{i<n_k}[\sigma _i^k]$ and $\gamma _k=\sup (\bigcup _{i<n_k}$dom$(\sigma _i^k))<\omega _1$.   Let $\gamma =\sup \{\gamma _k:k\in \omega \}$.  Then, as above, $\forall \beta >\gamma $, $x_\beta \in U_k \Leftrightarrow y_\beta \in U_k$, $\forall k\in \omega $.  In particular, $x_\beta \in U \Leftrightarrow y_\beta \in U$ $\forall \beta >\gamma $.  Note that $\gamma $ is definable in ${\mathcal M}$ since $U_k\in {\mathcal M}$, $\forall k\in \omega $.   Thus, since $F\notin {\mathcal M}$, $\alpha _i>\gamma $, $\forall i<n$ and hence $F\subseteq U$. 

Since $F\in [Z]^{<\omega }$ and ${\mathcal U}$ is an $\omega $-cover of $Z$, let $U\in {\mathcal U}$ such that $F\subseteq U$.  If $U\in {\mathcal M}$ we are done, so suppose $U\notin {\mathcal M}$.   Since $F\subseteq U=\bigcup _{i<n_U}[\sigma _i^U]$, we can refine $U$ so that $F\subseteq \bigcup_{i<n}[\tau _i]\subseteq U$ with $f_{\alpha _i}\in [\tau _i]$ $\forall i<n$.  We need to define $\tau _0,\dots ,\tau _{n-1}\in Fn(\omega _1,2)$ such that $f_{\alpha _i}\in [\tau _i]\subseteq U$, $\forall i<n$:
\\
Since $f_{\alpha _0}\in [\sigma _{i_0}^U]$ for some $i_0<n_U$, let $\tau _0=\sigma _{i_0}^U$.
\\
Fix $m>0$ and suppose $\{\tau _j:j<m\}$ have been defined.  Since $f_{\alpha _m}\in [\sigma _{i_m}^U]$, if $\tau _j\neq \sigma _{i_m}^U$, $\forall j<m$, then $\tau _m=\sigma _{i_m}^U$.  Otherwise, let $j<m$ such that $\tau _j=\sigma _{i_m}^U$.  Let $N=\{i<m: f_{\alpha _m}\in [\tau _i]\}$ and $\gamma _m=\max \{\bigcup dom(\tau _i):i<m\}<\omega _1$.  Define $dom(\tau _m)=\bigcup _{i\in N}dom(\tau _i)\cup \{\gamma _m+1\}$ and $\tau _m(\alpha )=\tau _i(\alpha)=f_{\alpha _m}(\alpha )$, $\forall \alpha \in dom(\tau _i), i\in N$, $\tau _m(\gamma _m+1)=f_{\alpha _m}(\gamma _m+1)$.
\\
Then $F\subseteq \bigcup _{i<n}[\tau _i]\subseteq U$ and we will further refine $\bigcup _{i<n}[\tau _i]$ so that $F\subseteq \bigcup _{i<n}[\varepsilon _i*b]\subseteq \bigcup _{i<n}[\tau _i]\subseteq U$ for $b\in [\omega _1]^k$ $(k\in \omega )$ and $\varepsilon _0,\dots \varepsilon _{n-1}\in 2^k$.  Let $b=\bigcup _{i<n}dom(\tau _i)\in [\omega _1]^{<\omega }$, $k=\vert b\vert $ and enumerate $b=\{\beta _i:i<k\}$.  For $i<n$, let $\varepsilon _i\in 2^k$ such that $\varepsilon _i(j)=f_{\alpha _i}(\beta _j)$, $\forall j<k$.  Then $\varepsilon _i*b(\beta _j)=\varepsilon _i(j)=f_{\alpha _i}(\beta _j) \Rightarrow \varepsilon _i*b\subseteq f_{\alpha _i}$, $\forall i<n$.   By absoluteness, $\varepsilon _0,\dots ,\varepsilon _{n-1}\in {\mathcal M}$ and if $b\in {\mathcal M}$ then by elementarity, $\exists U\in {\mathcal U}\cap {\mathcal M}$ such that $F\subseteq U$, so suppose $b\notin {\mathcal M}$.  Let $r=b\cap {\mathcal M}$.  Then $b\setminus r\neq \emptyset $.  Let $m=k- \vert r\vert $ and ${\mathcal D}=\{d\in [\omega _1]^m:\bigcup [\varepsilon _i*(r\cup d)]\subseteq V$, for some $V\in {\mathcal U}\}$.  Notice that $b\setminus r\in {\mathcal D}$ 
and since $b\setminus r\notin {\mathcal M}$, ${\mathcal D}$ is uncountable.  Then ${\mathcal D}^\shortmid =\{r\cup d: d\in {\mathcal D}\}$ is an uncountable family of finite subsets of $\omega _1$ so let ${\mathcal B}^\shortmid \subseteq {\mathcal D}^\shortmid $ be an uncountable $\Delta $-system with root $r=b\cap {\mathcal M}$ and let ${\mathcal B}=\{d\setminus r:d\in {\mathcal B}^\shortmid \}\subseteq {\mathcal D}$.   Then ${\mathcal B}\in {\mathcal D}_{\omega _1}(\omega _1)$ such that $\forall b^\shortmid \in {\mathcal B}$, $\vert b^\shortmid \vert =k-\vert r\vert =m$.  Recall $b\setminus r=\{\beta _j: \vert r\vert \leq j<k\}$.  So, reenumerate $b\setminus r=\{\gamma _j:j<m\}$ where $\gamma _j=\beta _{j+\vert r\vert }$, $\forall j<m$.  For $i<n$, let $\varepsilon _i^\shortmid \in 2^m$ such that $\varepsilon _i^\shortmid (j)=\varepsilon _i^\shortmid *b\setminus r(\gamma _j)=\varepsilon _i*b\setminus r (\beta _{j+\vert r\vert })=\varepsilon _i(j+\vert r\vert )$, $\forall j<m$.   Then, since ${\mathcal B}\in {\mathcal D}_{\omega _1}(\omega _1)$, $\varepsilon _0^\shortmid ,\dots ,\varepsilon _{n-1}^\shortmid \in 2^m$ and by elementarity ${\mathcal M}\models (*_k)$, let ${\mathcal C}\in [\mathcal B]^\omega \cap {\mathcal M}$ and $\alpha \in \omega _1\cap {\mathcal M}$ such that $\forall a=\{\beta _i:i<n\}\in [\omega _1\setminus \alpha ]^n$, $\exists c\in {\mathcal C}$ such that $f_{\beta _i}\in [\varepsilon _i^\shortmid *c]$, $\forall i<n$.  In particular, since $F\notin {\mathcal M}$, $\alpha _0,\dots ,\alpha _{n-1}\notin {\mathcal M}$ and since $\alpha \in {\mathcal M}$ we have that $\alpha _i>\alpha $, $\forall i<n$.  Thus, $\{\alpha _i:i<n\}\in [\omega _1\setminus \alpha ]^n$ so let $c\in {\mathcal C}$ such that $f_{\alpha _i}\in [\varepsilon _i^\shortmid *c]$, $\forall i<n$.  But, since ${\mathcal C}\in {\mathcal M}$ is countable, ${\mathcal C}\subseteq {\mathcal M}$ and hence $c\in {\mathcal C}\cap {\mathcal M}$ such that $\varepsilon _i^\shortmid *c\subseteq f_{\alpha _i}$, $\forall i<n$.  Also, since $\varepsilon _i*b\subseteq f_{\alpha _i}$, $\forall i<n$, $f_{\alpha _i}\in [\varepsilon _i\restriction \vert r\vert *r]\cap [\varepsilon _i^\shortmid *c]$, $\forall i<n$.   We claim that $[\varepsilon _i\restriction \vert r\vert *r]\cap [\varepsilon _i^\shortmid *c]=[\varepsilon _i*(r\cup c)]$ and hence $f_{\alpha _i}\in [\varepsilon _i*(r\cup c)]$, $\forall i<n \Rightarrow F\subseteq \bigcup_{i<n}[\varepsilon _i*(r\cup c)]$.
\begin{description}
\item[`$\subseteq $']  Let $g\in [\varepsilon _i\restriction \vert r\vert *r]\cap [\varepsilon _i^\shortmid *c]$.  Then $\varepsilon _i\restriction \vert r\vert *r\subseteq g$ and $\varepsilon _i^\shortmid *c\subseteq g$.   Recall $r=\{\beta _j: j<\vert r\vert \}$ and enumerate $c=\{\gamma _j:j<m\}$.  Then, $g(\beta _j)-\varepsilon _i\restriction \vert r\vert *r(\beta _j)=\varepsilon _i(j)$, $\forall j<\vert r\vert $ and $g(\gamma _j)=\varepsilon _i^\shortmid *c(\gamma _j)=\varepsilon _i^\shortmid (j)=\varepsilon _i(j+\vert r\vert )$, $\forall j<m$.  Now, since $r\cup c=\{\beta _j:j<\vert r\vert \}\cup \{\gamma _j:j<m\}$, reenumerate $r\cup c=\{\alpha _j:j<k\}$ so that $\alpha _j=\beta _j$ for $j<\vert r\vert $ and $\alpha _j =\gamma _{j-\vert r\vert }$ for $\vert r \vert \leq j<k$.  Then, for $j<\vert r\vert $, $g(\alpha _j)=g(\beta _j)=\varepsilon _i(j)=\varepsilon _i*(r\cup c)(\alpha _j)$ and for $j\geq \vert r\vert $, $g(\alpha _j)=g(\gamma _{j-\vert r\vert })=\varepsilon _i((j-\vert r\vert )+\vert r\vert )=\varepsilon _i(j)=\varepsilon _i*(r\cup c)(\alpha _j)$.
\item[`$\supseteq $']  Let $g\in [\varepsilon _i*(r\cup c)]$.  Then $g(\alpha _j)=\varepsilon _i*(r\cup c)(\alpha _j)=\varepsilon _i(j)$, $\forall j<k$, where $\alpha _j $ is defined as above, for $j<k$.  Then, $g(\beta _j)=g(\alpha _j)=\varepsilon _i(j)=\varepsilon _i*(r\cup c)(\beta _j)=\varepsilon \restriction \vert r\vert *r(\beta _j)$, $\forall j<\vert r\vert $ and hence $g\in [\varepsilon _i\restriction \vert r\vert *r]$.  Moreover, $g(\gamma _j)=g(\alpha _{j+\vert r\vert })=\varepsilon _i(j+\vert r\vert )=\varepsilon _i^\shortmid ((j+\vert r\vert )-\vert r\vert )=\varepsilon _i^\shortmid (j)=\varepsilon _i^\shortmid *c(\gamma _j)$, $\forall j<m$ and hence $g\in [\varepsilon _i^\shortmid *c]$.
\end{description}
Since $c\in {\mathcal C}\subseteq {\mathcal D}$, $\bigcup _{i<n}[\varepsilon _i*(r\cup c)]\subseteq V$ for some $V\in {\mathcal U}$.  But $\bigcup _{i<n}[\varepsilon _i*(r\cup c)]\in {\mathcal M}$ (since $r,c, \varepsilon _0, \dots ,\varepsilon _{n-1}\in {\mathcal M}$) so by elementarity, let $V\in {\mathcal U}\cap {\mathcal M}$ such that $\bigcup _{i<n}[\varepsilon _i*(r\cup c)]\subseteq V$.  Then $\exists V\in {\mathcal U}\cap {\mathcal M}$ such that $F\subseteq \bigcup _{i<n}[\varepsilon _i*(r\cup c)]\subseteq V$.
\end{proof}

\section{D-spaces}
The third named author first considered $\iota $-covers when trying to make the $T_2$ hereditarily Lindel\" of non D-space of \cite{SS2} regular.  Since this $T_2$ example is an $\epsilon $-space, he asked in \cite{SS2} whether every regular (hereditarily) $\epsilon $-space is a D-space.  We could ask the same about (hereditarily) $\iota $-spaces.  In fact, it remains unclear whether $\iota $-covers could play a role in constructing such a regular, hereditarily Lindel\" of non D-space.

\begin{corollary}
There is an $\iota_w$-space which is not a $D$-space.
\end{corollary}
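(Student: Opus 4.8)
The plan is to exhibit the example by recalling a non-$D$-space already available in the literature and showing it is an $\iota_w$-space via a coarser second-countable topology. The observation I would isolate first is the following: \emph{every $T_1$ space $(X,\tau)$ that admits a coarser second-countable topology $\sigma\subseteq\tau$ is an $\iota_w$-space.} To prove it, fix a countable base $\mathcal{B}$ for $(X,\sigma)$ and let $\mathcal{B}'$ be the (still countable) family of all finite unions of members of $\mathcal{B}$; every element of $\mathcal{B}'$ is $\sigma$-open, hence $\tau$-open. Given disjoint finite sets $F,G\subseteq X$, the set $X\setminus G$ is $\sigma$-open (a finite intersection of the open sets $X\setminus\{y\}$, $y\in G$) and contains $F$, so for each $x\in F$ we may pick $B_x\in\mathcal{B}$ with $x\in B_x\subseteq X\setminus G$. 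Then $\bigcup_{x\in F}B_x\in\mathcal{B}'$, it contains $F$ and misses $G$. Since being an $\iota$-cover depends only on the underlying set, $\mathcal{B}'$ is a countable open $\iota$-cover of $(X,\tau)$, as desired. (This is exactly the passage from a coarser second-countable topology to $\iota_w$ that was used implicitly in Section~3.)

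It then remains to point to a concrete non-$D$-space that is submetrizable, i.e. carries a coarser separable metrizable (in particular second-countable $T_1$) topology. The natural choice is one of the classical submetrizable Tychonoff non-$D$-spaces, for instance van Douwen's separable, first-countable, locally compact, locally countable example, which is a refinement of a separable metrizable topology: by the observation above it is an $\iota_w$-space which is not a $D$-space, proving the corollary. If one prefers to stay close to the discussion above, one could instead attempt to run the same argument on the $T_2$ hereditarily Lindel\"of non-$D$-space of \cite{SS2}, provided its construction is verified to refine a second-countable topology.

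The proof thus reduces almost entirely to the bookkeeping in the first paragraph, and the only point that genuinely needs care is confirming that the non-$D$-space one cites really does carry a coarser second-countable $T_1$ topology. For van Douwen's space this is part of its published description, so I expect no substantial obstacle here; the single modest delicacy is simply choosing a non-$D$-space whose submetrizability (with a \emph{countable}-base coarser topology) is explicitly on record, so that the last step is immediate.
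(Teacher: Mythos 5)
Your argument is correct in substance but takes a more direct route than the paper. The paper's example is the product $X \times B_e$ from \cite{AJW}, where $X$ is $\mathbb{R}$ retopologized so that points of a Bernstein set $B$ get van Douwen neighbourhoods and the remaining points keep Euclidean ones: the $\iota_w$ part is obtained from the coarser Euclidean topology on $X$ together with the countable-product theorem for $\iota_w$-spaces (Theorem \ref{productiw}), and the failure of the $D$-property comes from the closed diagonal copy of $B$ (with the van Douwen topology), which has countable extent but is uncountable and locally countable, hence not Lindel\"of and so not a $D$-space. Your proposal shortcuts all of this: the van Douwen space on a Bernstein set is \emph{itself} submetrizable, hence $\iota_w$ by your observation, and is not a $D$-space for exactly the countable-extent reason just quoted. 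That is a perfectly good, and simpler, witness for the corollary as stated; what the paper's longer construction buys is the extra information that one can even find such an example which is the product of a Lindel\"of $D$-space with a separable metrizable space. Two small points of care. First, your non-$D$-ness is delegated to ``the literature''; you should record the actual reason (countable extent plus non-Lindel\"ofness, using that a $D$-space of countable extent is Lindel\"of), since ``van Douwen's example is a non-$D$-space'' is not a freestanding classical fact independent of this argument. Second, your preliminary lemma as stated assumes only that $(X,\tau)$ is $T_1$, but the step ``$X\setminus\{y\}$ is $\sigma$-open'' needs the \emph{coarser} topology $\sigma$ to be $T_1$; without that the lemma is false (an indiscrete coarsening of a discrete space of size greater than $\mathfrak{c}$ gives a counterexample, by Proposition \ref{bound}). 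In your application $\sigma$ is the Euclidean topology, so this is only a matter of restating the hypothesis, not a flaw in the example.
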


\begin{proof}
The example is taken from \cite{AJW}, but we nevertheless present the details of its construction for the reader's convenience. Erik van Douwen showed in \cite{vD} that one can put, on every subset of the real line, a locally compact locally countable topology with countable extent which is finer than the topology it inherits from the Euclidean one. Let $B \subset \mathbb{R}$ be a Bernstein set, that is, a set meeting every uncountable closed set along with its complement. Let $X=\mathbb{R}$, where points of $B$ have neighbourhoods as in the van Douwen topology and points of $X \setminus B$ have their usual Euclidean neighbourhoods.

\vspace{.1in}

\noindent {\bf Claim 1.} $X$ is Lindel\"of and a $D$-space.

\begin{proof}[Proof of Claim 1]
To prove that $X$ is Lindel\"of, let $\mathcal{U}$ be an open cover of $X$. Let $V=\bigcup \{ U \in \mathcal{U}: U \cap (X \setminus B) \neq \emptyset \}$. Then $V$ covers all but countably many points of $X$. Indeed if $X \setminus V$ were uncountable, then $(X \setminus V) \cap (X \setminus B) \neq \emptyset$ which is a contradiction. But since the topology of $X \setminus B$ is Lindel\"of, $\mathcal{V}$ has a countable subcover, and hence $X$ is a Lindel\"of space.

To prove that $X$ is a $D$-space, we need the following lemma:

\begin{lemma}
Every countable space $Y$ is a $D$-space.
\end{lemma}

\begin{proof}[Proof of Lemma]
Let $N: Y \to \tau$ be a neighbourhood assignment and fix an enumeration $\{y_n: n < \omega \}$ of $Y$. Suppose you have picked points $\{y_{k_i}: i \leq n \}$. If $N(\{y_{k_i}: i \leq n \})=Y$ then we are done, otherwise let $y_{k_{n+1}}$ be the least indexed point such that $y_{k_{n+1}} \notin N(\{y_{k_i}: i \leq n \})$. We claim that $N(\{y_{k_i}: i < \omega \})=Y$ and $\{y_{k_i}: i<\omega \}$ is closed discrete. For the first claim, suppose by contradiction that there is a point $y \notin N(\{y_{k_i}: i < \omega\}$. Then there is a $j<\omega$ such that $y$ is the least indexed point (in the original enumeration of $Y$) such that $y \notin \{y_{k_i}: i \leq j \}$. But then $y=y_{k_{j+1}}$, which is a contradiction. The fact that $\{y_{k_i}: i < \omega \}$ is closed discrete follows from the first claim and the fact that $N(y_{k_n}) \cap \{y_{k_i}: i < \omega \}$ is a finite set.
\end{proof}

Let $N: X \to \tau$ be an open neighbourhood assignment. Since $X \setminus B$ is a $D$-space we can find a closed discrete set $D_1 \subset X \setminus B$ such that $N(D_1) \supset X \setminus B$. Now $X \setminus N(D_1)$ is a countable closed set. So $X \setminus N(D_1)$ is a $D$-space and hence we can find $D_2 \subset X \setminus N(D_1)$ such that $N(D_2) \supset X \setminus N(D_1)$. Therefore $D=D_1 \cup D_2$ is a closed discrete set such that $N(D)=X$, so $X$ is a $D$-space.
\end{proof}
Now let $B_e$ be the Bernstein set $B$ with its usual (Euclidean) topology.

\noindent {\bf Claim 2}. $X \times B_e$ is an $\iota_w$-space but not a $D$-space.

\noindent {\it Proof of Claim 2}.  Since the topology on $X$ refines the topology of the real line, and the real line has a countable open $\iota$-cover, also $X$ has a countable open $\iota$-cover. So, it follows from Theorem $\ref{productiw}$ that $X \times B_e$ is an $\iota_w$-space. To prove that $X \times B_e$ is not a $D$-space, note that it contains the closed copy $\{(x,x): x \in B \}$ of the space $B$ and that $B$ is not a $D$-space, because it has countable extent, but it is uncountable and locally countable and thus not Lindel\"of.
\end{proof}

\begin{question}
Is every (hereditarily) $\iota$-space a $D$-space?
\end{question}

\end{document}